\documentclass[11pt]{amsart}

\usepackage{amsxtra,amssymb,amsthm,amsmath,amscd,url,listings,amsrefs}
\usepackage[utf8]{inputenc}
\usepackage{eucal}
\usepackage{fullpage}
\usepackage{scrtime}
\usepackage{hyperref}



\renewcommand{\leq}{\leqslant}
\renewcommand{\geq}{\geqslant}

\numberwithin{equation}{section}



\newcommand{\uple}[1]{\text{\boldmath${#1}$}}

\def\stacksum#1#2{{\stackrel{{\scriptstyle #1}}
{{\scriptstyle #2}}}}


\newcommand{\bfm}{\uple{m}}
\newcommand{\bfn}{\uple{n}}

\newcommand{\Cc}{\mathbf{C}}
\newcommand{\Nn}{\mathbf{N}}
\newcommand{\Aa}{\mathbf{A}}
\newcommand{\Bb}{\mathbf{B}}
\newcommand{\Dd}{\mathbf{D}}
\newcommand{\Zz}{\mathbf{Z}}
\newcommand{\Pp}{\mathbf{P}}
\newcommand{\Rr}{\mathbf{R}}
\newcommand{\Gg}{\mathbf{G}}
\newcommand{\Gm}{\mathbf{G}_{m}}

\newcommand{\Qq}{\mathbf{Q}}
\newcommand{\Fp}{{\mathbf{F}_p}}
\newcommand{\Fpt}{{\mathbf{F}^\times_p}}
\newcommand{\Ff}{\mathbf{F}}

\newcommand{\gnat}{g^\natural}

\newcommand{\HYPK}{\mathcal{K}\ell}




\newcommand{\mods}[1]{\,(\mathrm{mod}\,{#1})}


\newcommand{\wwd}{\mathcal{C}}
\newcommand{\haut}{\mathbf{G}}
\newcommand{\hautb}{\mathbf{B}}

\newcommand{\tsum}{\mathcal{S}}

\newcommand{\frtr}[3]{(\Tr{{#1}})({#2},{#3})}
\DeclareMathOperator{\hypk}{Kl}

\newcommand{\lmax}{\mathcal{M}}



\newcommand{\ra}{\rightarrow}
\newcommand{\lra}{\longrightarrow}

\newcommand{\injecte}{\hookrightarrow}

\newcommand{\fleche}[1]{\stackrel{#1}{\lra}}


\DeclareMathOperator{\rank}{rank}

\DeclareMathOperator{\frob}{\mathrm{Fr}}
\DeclareMathOperator{\tr}{\mathrm{tr}}
\DeclareMathOperator{\Gal}{Gal}

\DeclareMathOperator{\supp}{supp}

\DeclareMathOperator{\Tr}{tr}

\DeclareMathOperator{\swan}{Swan}

\DeclareMathOperator{\ft}{FT}
\DeclareMathOperator{\cond}{cond}
\DeclareMathOperator{\Ad}{Ad}
\DeclareMathOperator{\dual}{D}


\newcommand{\eps}{\varepsilon}
\renewcommand{\rho}{\varrho}


\DeclareMathOperator{\SL}{SL}
\DeclareMathOperator{\GL}{GL}
\DeclareMathOperator{\PGL}{PGL}
\DeclareMathOperator{\rmT}{T}
\DeclareMathOperator{\rmG}{G}
\DeclareMathOperator{\rmN}{N}
\DeclareMathOperator{\rmU}{U}

\DeclareMathOperator{\Sp}{Sp}



\newcommand{\sheaf}[1]{\mathcal{{#1}}}



\DeclareMathSymbol{\gena}{\mathord}{letters}{"3C}
\DeclareMathSymbol{\genb}{\mathord}{letters}{"3E}


\def\dblsum{\mathop{\sum \sum}\limits}

\def\multsum{\mathop{\sum\cdots \sum}\limits}

\def\intc{\frac{1}{2i\pi}\mathop{\int}\limits}


\theoremstyle{plain}
\newtheorem{theorem}{Theorem}[section]
\newtheorem*{theorem*}{Theorem}
\newtheorem{lemma}[theorem]{Lemma}
\newtheorem{corollary}[theorem]{Corollary}

\newtheorem{proposition}[theorem]{Proposition}

\theoremstyle{remark}

\newtheorem*{rem}{Remark}

\theoremstyle{definition}

\newtheorem{definition}[theorem]{Definition}

\newtheorem{remark}[theorem]{Remark}




\newcommand{\mcL}{\mathcal{L}}

\newcommand{\mcF}{\mathcal{F}}
\newcommand{\mcJ}{\mathcal{J}}
\newcommand{\mcI}{\mathcal{I}}

\renewcommand{\geq}{\geqslant}
\renewcommand{\leq}{\leqslant}
\renewcommand{\Re}{\mathfrak{Re}\,}
\renewcommand{\Im}{\mathfrak{Im}\,}

\newcommand{\refs}{\eqref}

\newcommand{\ov}[1]{\overline{#1}}

\newcommand\sumsum{\mathop{\sum\sum}\limits}

\begin{document}

\title{Algebraic trace functions over the primes}
 
\author{\'Etienne Fouvry}
\address{Universit\'e Paris Sud, Laboratoire de Math\'ematique\\
  Campus d'Orsay\\ 91405 Orsay Cedex\\France}
\email{etienne.fouvry@math.u-psud.fr}

\author{Emmanuel Kowalski}
\address{ETH Z\"urich -- D-MATH\\
  R\"amistrasse 101\\
  CH-8092 Z\"urich\\
  Switzerland} \email{kowalski@math.ethz.ch}

\author{Philippe Michel} \address{EPFL/SB/IMB/TAN, Station 8, CH-1015
  Lausanne, Switzerland } \email{philippe.michel@epfl.ch}

\date{\today,\ \thistime} 

\thanks{Ph. M. was partially supported by
  the SNF (grant 200021-137488) and the ERC (Advanced Research Grant
  228304). \'E. F. thanks ETH Z\"urich, EPF Lausanne and the Institut
  Universitaire de France for financial support.  }

\subjclass[2010]{11N05, 11N13, 11N32, 11N35, 11F11, 11T23, 11L05}

\keywords{Sums over primes, M\"obius function, Eisenstein series,
  trace functions of $\ell$-adic sheaves, Riemann Hypothesis over
  finite fields}

\begin{abstract}
  We study sums over primes of trace functions of $\ell$-adic
  sheaves. Using an extension of our earlier results on algebraic
  twist of modular forms to the case of Eisenstein series and bounds
  for Type II sums based on similar applications of the Riemann
  Hypothesis over finite fields, we prove general estimates with
  power-saving for such sums. We then derive various concrete
  applications. 
\end{abstract}

\maketitle
\setcounter{tocdepth}{1}
\tableofcontents

\section{Introduction}

Let $f(X)=P(X)/Q(X)$, where $P,Q\in\Zz[X]$, be a non-constant rational
function. If $p$ is a prime large enough so that $f(X)$ defines a
rational function on $\Fp$ by reduction modulo $p$, it follows from
the work of Weil that we have the estimate
$$
\sum_\stacksum{1\leq n\leq
  p}{(Q(n),p)=1}e\Bigl(\frac{P(n)\ov{Q(n)}}p\Bigr)\ll
p^{1/2}
$$ 
(where $Q(n)\ov{Q(n)}=1\mods p$) which exhibits considerable
cancellation in this exponential sum. It is a natural question, with
many potential applications, to ask whether such cancellation persists
when the sum is restricted to prime numbers $q$, either less than $p$
or over shorter intervals (longer intervals being usually easier to
handle).

In \cite[Théorème 1.1]{FMAnn}, Fouvry and Michel proved that this is
indeed almost always the case:

\begin{theorem}\label{primesumthmFM}
  Let $f=P/Q$, with $P$, $Q\in\Zz[X]$ coprime unitary polynomials. For
  every prime $p$ such that the reduction of $f$ modulo $p$ is not a
  polynomial of degree $\leq 1$, for every $X\leq p$ and every
  $\eta<1/32$, we have
$$
\sum_\stacksum{q\leq X,\ prime}{(Q(q),p)=1}
e\Bigl(\frac{P(q)\ov{Q(q)}}p\Bigr) \ll
X\Bigl(\frac{p}X\Bigr)^{7/32}p^{-\eta},
$$
where the implicit constant
depends only on $\eta$ and on the degrees of $P$ and $Q$.
\end{theorem}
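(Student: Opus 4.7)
The plan is to follow the classical Vinogradov strategy for primes in bilinear form. First I would apply a combinatorial decomposition of the prime indicator function — either Vaughan's identity or Heath-Brown's identity — to reduce the sum over primes $q \leq X$ to a bounded number of bilinear (and trilinear) sums of Type I and Type II shape, namely
$$
S_{\mathrm{I}} = \sum_{m \leq M} \alpha_m \sum_{\stacksum{n \leq N}{(Q(mn),p)=1}} e\Bigl(\frac{P(mn)\ov{Q(mn)}}{p}\Bigr),
$$
with smooth or divisor-like coefficients $\alpha_m$ and essentially no arithmetic structure on $n$, and
$$
S_{\mathrm{II}} = \sum_{m \leq M} \sum_{\stacksum{n \leq N}{(Q(mn),p)=1}} \alpha_m \beta_n e\Bigl(\frac{P(mn)\ov{Q(mn)}}{p}\Bigr),
$$
with arbitrary bounded coefficients on both variables, for various dyadic ranges $MN \asymp X$.

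For Type I sums, I would complete the inner sum in $n$ via Fourier expansion modulo $p$ (Polya–Vinogradov) and then invoke Weil's bound on complete exponential sums attached to the rational function $P(mX)\ov{Q(mX)}/p + hX/p$; since $f$ is not a polynomial of degree $\leq 1$ this rational function is not linear for generic $m$, giving square-root cancellation on each complete sum and hence a nontrivial saving as soon as $N$ is not too small compared to $p^{1/2}$.

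The hard part is the Type II estimate. Here I would apply Cauchy–Schwarz in the $m$ variable, expand the square, and swap summations to reduce matters to the inner correlation sum
$$
\sum_{m \leq M} e\Bigl(\frac{P(mn_1)\ov{Q(mn_1)} - P(mn_2)\ov{Q(mn_2)}}{p}\Bigr)
$$
for pairs $n_1 \neq n_2$, which after another application of completion becomes a sum to which Deligne's Riemann Hypothesis over $\Ff_p$ can be applied. The geometric input one needs is that the $\ell$-adic Artin–Schreier sheaf $\mcL_{\psi(f)}$ attached to $f$ is not geometrically isomorphic to any of its multiplicative translates $[\times t]^*\mcL_{\psi(f)}$ (for $t \neq 1$), except on a negligible set of bad shifts; this is exactly where the hypothesis that $f$ mod $p$ is not of degree $\leq 1$ enters. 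Granting this, each such correlation sum saves a factor of $p^{1/2}$ over trivial, and after averaging over $(n_1,n_2)$ one obtains a Type II bound giving saving as long as $M$ and $N$ are both somewhat larger than $p^{1/4}$.

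Finally I would balance the two estimates against the trivial bound for the "diagonal" ranges that Vaughan's identity forces, optimize the splitting parameters so that Type I and Type II sums give comparable savings, and arrive at the exponent $7/32$. The main obstacle is the geometric verification underpinning the Type II step: one must show that for all but $O(1)$ values of the shift $t$ the sheaf has no multiplicative self-symmetry, and simultaneously control the conductors of the resulting sheaves uniformly so that the Riemann Hypothesis estimate depends only on $\deg P$ and $\deg Q$, not on $p$.
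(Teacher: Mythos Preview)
This theorem is not proved in the present paper: it is quoted from \cite{FMAnn} as background and motivation, and no argument for it is given here. So there is no ``paper's own proof'' to compare your proposal against.

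That said, your sketch is broadly in the right spirit and matches the general framework the paper does develop for its own main result (Theorem~\ref{primesumthm}): Heath-Brown's identity, reduction to Type~I and Type~II sums, Polya--Vinogradov completion plus Weil for Type~I, and Cauchy--Schwarz followed by Deligne's Riemann Hypothesis on the correlation sums for Type~II. The geometric non-degeneracy condition you isolate (that $\mcL_{\psi(f)}$ is not isomorphic to its nontrivial multiplicative translates) is indeed the heart of the matter. One caution: a single Cauchy--Schwarz plus completion in Type~II typically yields a saving only when both variables exceed roughly $p^{1/2}$, and the specific exponent $7/32$ in \cite{FMAnn} comes from a more refined Type~II treatment (iterated smoothing and sharper geometric input on the ramification of the relevant sheaves) than what you describe; your outline would more naturally produce a weaker exponent. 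If you want to recover exactly $7/32$, you would need to look at \cite{FMAnn} itself rather than this paper.
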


Similar estimates were already known when $f$ is a polynomial (of
degree $>1$), but with the exponent $\eta$ depending on the degree of
$f$ and tending to $0$ as the latter increased (see,
e.g,~\cite{Hua,Harman}). Thus an important new feature in~\cite{FMAnn} was
to allow for the most general possible rational fractions, and for a
uniform $p$-power saving. One of the key input of the proof was an
essential use of Deligne's theory of higher dimensional algebraic
exponential sums.

There are, however, many other functions defined over $\Fp$ for which
one would like to have similar results. For instance, with
$f(X)=P(X)/Q(X)$ with $P$, $Q$ in $\Zz[X]$ as above, one may naturally
want to consider
$$
K(n)=
\begin{cases}
  \chi(f(n))&\text{if $(p,Q(n))=1$ },\\
  0&\hbox{if}\ p|Q(n),
\end{cases}
$$
for some non-trivial Dirichlet character $\chi$ modulo $p$ of order
$h\geq 2$, provided the rational function $f$ is not proportional to
an $h$-th power. Nevertheless, the only examples we are aware of
concerns the case where $f$ is a split polynomial of degree $\leq 2$
not vanishing at $0$ which was considered by
Karatsuba~\cite{Kar,Kar2,Kar3} (see also \cite{FGS}). In
Corollary~\ref{cor-mult-car}, we will prove a non-trivial bound for an
(almost) arbitrary rational function $f$.

Beyond additive and multiplicative characters, there are other
functions defined over finite fields which are now common tools in
number theory. A nice example is given by the (normalized)
hyper-Kloosterman sums in $m-1$ variables, introduced by Deligne and
studied by Katz in great detail in \cite{GKM}, which are defined by
$$
K(n)=\hypk_m(n;p)=\frac{1}{p^{\frac{m-1}2}}\multsum_\stacksum{x_1\cdots
  x_m=n}{x_i\in\Fp} e\Bigl(\frac{x_1+\cdots+x_m}p\Bigr),
$$
for some integer $m\geq 2$. This example was first considered by the third
author in~\cite{Michelthese,MichelInv,MichelDMJ}, who obtained a modest (yet
non-trivial) saving of $\frac{\log\log p}{\log p}$ over the trivial
bound $O(p/\log p)$ for the sum over primes $q<p$ of $\hypk_m(q;p)$.

\begin{remark}\label{introHNY}
  One can also wonder about the very recent generalizations of
  Kloosterman sums $\hypk^\rho_{\check\Gg}$ associated to the general
  Kloosterman sheaves defined, for an arbitrary split reductive group
  $\check\Gg$ and a representation $\rho$ of it, by Heinloth, Ng\^o and
  Yun~\cite{HNY}, where the case of the hyper-Kloosterman sums above
  corresponds to $\check\Gg=\GL_n$ with its standard
  representation. It is a sign of the generality of our results that
  they do apply very straightforwardly to this case, although the
  corresponding trace functions have not (yet) been made explicit for
  all\footnote{\ As Ng\^o kindly informed us, Yun has computed these
    sums explicitly for $\check\Gg=\mathrm{SO}(2n+1)$ and its standard
    representation: $\hypk_{\mathrm{SO}(3)}$ is the symmetric square
    of $\hypk_2$ and $\hypk_{\mathrm{SO}(2n+1)}$ for $n\geq 2$ is
    essentially the multiplicative convolution of
    $\hypk_{\mathrm{SO}(3)}$ and of two Kloosterman sums $\hypk_n$,
    see~\cite{Yun}.} $\check\Gg$!
\end{remark}

The common link between all these functions is that they are special
cases of the general class of functions we called {\em trace weights}
in \cite{FKM}, with bounded conductors. Precisely, we have the
following definition:

\begin{definition}[Trace weights]\label{def-admissible}
  For a prime $p$ and a prime $\ell\not=p$, an {\em isotypic trace
    sheaf modulo} $p$ is a geometrically isotypic $\ell$-adic Fourier
  sheaf $\mcF$ on $\Aa^1_{\Fp}$, in the sense
  of~\cite[Def. 8.2.2]{GKM}, which is pointwise pure of weight $0$.
\par
An {\em isotypic trace weight modulo} $p$ is the trace function
$$
K(x)=\iota(\frtr{\sheaf{F}}{\Fp}{x})
$$
for $x\in \Fp$ of an isotypic trace sheaf $\mcF$, this trace function
being seen as complex-valued by means of some fixed isomorphism
$\iota\,:\,\bar{\Qq}_{\ell}\ra \Cc$.
\end{definition}

To any middle-extension sheaf $\mcF$ on $\Aa^1_{\Fp}$ is associated
its \emph{analytic conductor}, a numerical invariant which measures
the complexity of $\mcF$. This is a positive integer defined by
$$
\cond(\mcF)=\rank(\mcF)+\sum_{x}(1+\swan_x(\mcF)),
$$
where $x$ ranges over the (finitely many) singularities of $\mcF$ in
$\Pp^1(\ov\Fp)$, i.e., those $x$ where $\mcF$ is not lisse, and
$\swan_x(\mcF)\geq 0$ is the Swan conductor of $\sheaf{F}$ at $x$
(see~\cite{GKM}). For an isotypic trace weight $K(n)$, we define the
conductor as the minimal conductor of an isotypic trace sheaf
$\sheaf{F}$ with trace function equal to $K(n)$ on $\Fp$.

\begin{remark} For example:
\begin{itemize}
\item[-] If $K(n)=e(P(n)/p)$ for a polynomial $P\in\Fp[X]$ of degree
  $<p$, the associated sheaf has conductor $\cond(\mcF)=\deg P+2$;
\item[-] If $K(n)=\chi(P(n))$ where $\chi$ is multiplicative and
  $P\in\Fp[X]$ a polynomial, then $\cond(\mcF)$ is bounded by $2$ plus
  the number of distinct zeros of $P$ in $\ov{\Fp}$;
\item[-] For the hyper-Kloosterman sums $K(n)=\hypk_m(n;p)$, the
  conductor is $m+3$;
\item[-] For the trace function of the $\ell$-adic Kloosterman sheaf
  associated to the adjoint representation of the split reductive
  group $\check\Gg$ in~\cite{HNY}, the conductor is bounded by
  $\dim(\check\Gg)+2+r(\check\Gg)$, where $r(\check\Gg)$ is the rank
  of $\check\Gg$ (by~\cite[p. 4, (3)]{HNY}: this sheaf is of dimension
  $\dim \Ad=\dim\check\Gg$, lisse on $\Gg_m$, tame at $0$ and with
  Swan conductor $r(\check\Gg)$ at $\infty$).
\end{itemize}
\end{remark}

Our main result in this paper is an estimate for any sum over primes
$q$ of an isotypic trace function modulo $p$, which is universal in
quality and gives power-saving whenever the length of the sum is
roughly comparable with $p$ on a logarithmic scale, in particular
allowing some sums over shorter intervals $q<p^{1-\theta}$ for some
$\theta>0$.  The only weights we cannot handle are those where the
corresponding estimate would be tantamount to a ``quasi-Riemann
Hypothesis'', i.e., a zero-free strip for some Dirichlet
$L$-functions.
\par
It will therefore be natural to say that $K(n)$ is an
\emph{exceptional weight modulo $p$} (for sums over primes) if it is
proportional to a weight
$$
K_{\chi,\psi}(n)=\chi(n)\psi(n)
$$
where $\chi$ (resp. $\psi$) is a multiplicative (resp. additive)
character modulo $p$, where either or both may be trivial.  
\par
Similarly, a sheaf $\mcF$ will be called \emph{exceptional} if it is
geometrically isotypic and geometrically isomorphic to a sum of copies
of a tensor product $\sheaf{L}_{\chi}\otimes\sheaf{L}_{\psi}$ of a
Kummer sheaf with an Artin-Schreier sheaf, so that its trace function
is exceptional. 
\par
We state the results both for standard and for smoothed sums over
primes. We will consider smooth test functions $V$, compactly
supported in $[1/2,2]$, such that
\begin{equation}\label{Vcond}
  x^jV^{(j)}(x)\ll Q^j
\end{equation}
for some $Q\geq 1$ and for any integer $j\geq 0$, where the implicit
constant depends on $j$.

\begin{theorem}[Trace weights vs. primes]\label{primesumthm}
  Let $K$ be an isotypic trace weight on $\Fp$ associated to some
  sheaf $\mcF$, and assume that $\mcF$ is not exceptional.  Let $V$ be
  a smooth function as above satisfying~\emph{(\ref{Vcond})} for some
  parameter $Q\geq 1$.
\par
For any $X\geq 2$, we have
\begin{align}\label{primesumsmooth}
\sum_{q\ \text{prime}}K(q)V\Bigl(\frac{q}X\Bigr)&\ll
QX(1+p/X)^{1/6}p^{-\eta},\\
\label{primesuminterval}
\sum_\stacksum{q\ \text{prime}}{q \leq X}K(q)&\ll
X(1+p/X)^{1/12}p^{-\eta/2},
\end{align}
for any $\eta<1/24$. The implicit constants depend only on $\eta$,
$\cond(\mcF)$ and the implicit constants
in~\emph{(\ref{Vcond})}. Moreover, the dependency on $\cond(\mcF)$ is
at most polynomial.
\end{theorem}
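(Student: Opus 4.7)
\smallskip
\noindent\textbf{Proof plan.}
The plan is to prove the smooth version \eqref{primesumsmooth} first, and then deduce the sharp-cutoff version \eqref{primesuminterval} from it by a standard smoothing/de-smoothing argument (with parameter $Q$) which trades a factor $Q$ in the smooth bound against a trivial error $X/Q$ coming from the transition regions; balancing $Q$ to equalize the two terms halves both the exponent of $p/X$ and the exponent $\eta$, which is exactly what is gained in passing from \eqref{primesumsmooth} to \eqref{primesuminterval}. So the whole content is \eqref{primesumsmooth}, which we approach through a combinatorial identity for $\Lambda$, as in the proof of Theorem~\ref{primesumthmFM}.

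Concretely, I would apply a Heath-Brown (or Vaughan) identity to $\Lambda(q)V(q/X)$ to write the sum as a bounded number of multilinear pieces with divisor-bounded coefficients, each of the form
\[
\tsum_{\mathrm{I}}=\sum_{m\sim M}\alpha_m\sum_n K(mn)V\!\left(\tfrac{mn}{X}\right),\qquad
\tsum_{\mathrm{II}}=\sum_{m\sim M}\sum_{n\sim N}\alpha_m\beta_n K(mn)V\!\left(\tfrac{mn}{X}\right),
\]
with $MN\asymp X$ and $M$ living in a controlled range. The Type I sums are handled by completing the inner sum modulo $p$ via Fourier analysis and invoking the Riemann Hypothesis for the twisted trace functions of the sheaves $[\times m]^{*}\mcF$: for $\mcF$ non-exceptional, the Fourier transform $\widehat{K}$ is bounded by $O(p^{1/2})$ uniformly (Deligne/Katz), which, combined with the rapid decay of the Mellin transform of $V$, yields the desired saving as long as $M$ is not too large relative to $p/X$. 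For the narrow window in which the direct completion is inefficient, I would appeal to the authors' extension to Eisenstein series of their earlier algebraic twist estimates (alluded to in the abstract), since $\sum_n K(mn)V(mn/X)$ is precisely a twist of an Eisenstein coefficient by the trace function $K(m\cdot)$.

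The Type II sums are the heart of the matter. Here one applies Cauchy-Schwarz in the $m$ variable, reducing to the correlation sum
\[
\sum_{n\sim N}K(m_1 n)\overline{K(m_2 n)},
\]
averaged over pairs $(m_1,m_2)$. For $m_1\neq m_2$ this is (after completion) bounded via Deligne's theorem applied to the tensor product $\mcF\otimes[\times (m_2/m_1)]^{*}\ov{\mcF}$; the non-exceptional hypothesis is exactly what guarantees that, for generic $m_2/m_1$, this sheaf has no invariants, so that the resulting sum enjoys square-root cancellation of size $O(p^{1/2})$. Combining the diagonal contribution with this off-diagonal bound produces the bilinear estimate needed for $\tsum_{\mathrm{II}}$, subject to a range condition on $M,N$. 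The exponent $\eta<1/24$ emerges from balancing the Type I range (where Eisenstein twists/Fourier completion control the sum) with the Type II range (Cauchy-Schwarz plus correlation bounds), optimized over the choice of cutoffs in the Heath-Brown identity.

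The main obstacle is the Type II step: one must show that the correlation sheaf $\mcF\otimes [\times\lambda]^{*}\ov{\mcF}$ for $\lambda\in\Fp^\times$, $\lambda\neq 1$, is geometrically non-trivial with controlled conductor, so that Deligne gives square-root cancellation with polynomial dependence on $\cond(\mcF)$. This geometric input is precisely the content of the ``non-exceptional'' hypothesis: the exceptional sheaves $\sheaf{L}_\chi\otimes\sheaf{L}_\psi$ are exactly those for which such a correlation can be as large as $p$, which would be tantamount to a quasi-Riemann Hypothesis for Dirichlet $L$-functions. Once this sheaf-theoretic classification is in place, the rest of the argument is a quantitative bookkeeping exercise following the blueprint of \cite{FMAnn}, with the Eisenstein-twist input replacing the Fouvry--Michel treatment of specific rational phases, and with the final deduction of \eqref{primesuminterval} from \eqref{primesumsmooth} by the smoothing argument described above.
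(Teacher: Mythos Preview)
Your overall structure (Heath--Brown identity, then Type~I/Type~II, then deduce the sharp cutoff by smoothing) is right, and the smoothing argument for \eqref{primesuminterval} is exactly what the paper does. But there is a genuine gap in your plan at the level of the ``Type~I'' input, and it is the main point of the argument.

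The sum $\sum_{n}K(mn)V(mn/X)$ is \emph{not} an Eisenstein twist; it is just a smoothed linear sum of $K$. An Eisenstein twist is $\sum_{n}K(n)\,d_{it}(n)\,V(n/X)$, which arises only when you have \emph{two} smooth variables $n_1,n_2$ and combine them via $n=n_1n_2$ (Mellin inversion in each gives the divisor function $d_{it}$). In the paper's terminology this is a Type~$I_2$ sum, and Theorem~\ref{typeIsumthm} (the Eisenstein extension of the algebraic-twist result) is precisely what handles it, yielding a saving of $p^{-1/8}$ when $N_1N_2\gg p^{3/4}$. The paper does \emph{not} use Type~I at all: its proof runs on Type~$I_2$ plus Type~II only (see the remark at the start of the optimization section). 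Your plan, as written, only has genuine Type~I (completion, good when $N_1\gg p^{1/2}$) and Type~II; the ``narrow window'' you hope to close with Eisenstein series is not covered by the object you wrote down. Concretely, for $X\approx p$ and two smooth variables each of size $\approx p^{1/2}$, the Type~II bound \eqref{typeIIeq} gives no saving ($p^{-1/4}+M^{-1/2}+p^{1/4}N^{-1/2}\asymp 1$), and neither does Type~I. It is the Type~$I_2$ bound that saves $p^{1/8}$ here and drives the final exponent $1/24$ via Proposition~\ref{pr-optimize}.

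A smaller but still real point: in the Type~II step, after completion you get correlation sums
\[
\wwd(m_1,m_2,h,K)=\sum_{z\in\Fp}\overline{K(m_1z)}K(m_2z)e\!\left(\frac{hz}{p}\right)
\]
with an additive twist $h$, so the relevant geometric statement is not merely that $\mcF\otimes[\times\lambda]^{*}\dual(\mcF)$ has no invariants for generic $\lambda$. What is needed is Proposition~\ref{correlationprop}: square-root cancellation for all pairs $(m,h)\in\Fp^{\times}\times\Fp$ outside a set of size $O_{\cond(\mcF)}(1)$. The paper proves this via the Fourier--M\"obius group $\haut_{\mcF}$ intersected with the Borel subgroup (Theorem~\ref{th-bound-bad}); the non-exceptional hypothesis is exactly what forces this intersection to be bounded.
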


\begin{remark} 
  For $X=p$ one gets 
$$
\sum_\stacksum{q\ \text{prime}}{q <p}K(q)\ll
p^{1-1/48+\eps},
$$ 
and for general $X$ these bounds are non-trivial as long as the
conductor of $\mcF$ remains bounded and the range $X$ is greater that
$p^{3/4+\eps}$ for some $\eps>0$. Stronger results are available by
different methods for special $K$. For instance,
Bourgain~\cite{bourgainmore} and Bourgain-Garaev \cite{BG} have
obtained bounds for
$$
K(n)=e\Bigl(\frac{an+b{\ov{n}}^{k}}{p}\Bigr),\ k\in\Nn-\{0\},\
(b,p)=1,
$$
which are non-trivial as long as $X\geq p^{1/2+\eps}$ (see also
\cite{FoSh} for a survey of existing methods).
\end{remark}

Closely related to Theorem \ref{primesumthm} is the following
estimate:

\begin{theorem}[Trace weights vs. M\"obius]
  \label{moebiussumthm} Let $\mu$ denote the M\"obius function. With
  the same notations and hypotheses as in Theorem~\ref{primesumthm}, we
  have for $X\geq 2$
\begin{align*}
\sum_{n}\mu(n)K(n)V\Bigl(\frac{n}X\Bigr)&\ll 
QX(1+p/X)^{1/6}p^{-\eta},\\
\sum_{n \leq X}\mu(n)K(n)&\ll 
X(1+p/X)^{1/12}p^{-\eta/2},
\end{align*}
for any $\eta<1/24$, where the implicit constants depend only on
$\eta$, $\cond(\mcF)$ and the implicit constants
in~\emph{(\ref{Vcond})}, and the dependency on $\cond(\mcF)$ is at
most polynomial.
\end{theorem}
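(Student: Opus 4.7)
The plan is to follow exactly the same route as the proof of Theorem \ref{primesumthm}, exploiting the fact that both $\Lambda(n)$ and $\mu(n)$ admit combinatorial decompositions (via Vaughan's identity or Heath--Brown's identity) into bilinear sums of the same structural shape. The bilinear (Type I and Type II) estimates for trace weights developed for Theorem \ref{primesumthm} can then be reused essentially verbatim, with logarithmic factors replaced by constants.

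Concretely, I would apply such an identity to $\mu$ with parameters $U, V \geq 1$ to be chosen, reducing the Möbius sum to: (i) a trivial contribution from $n \leq U$, bounded using $|K(n)| \leq \cond(\mcF)$; (ii) Type I sums of the shape
$$
\sum_{a \leq UV} \alpha_a \sum_{b} K(ab)\,W(ab/X),
$$
with $|\alpha_a|$ controlled by divisor-type weights and $W$ a smooth compactly supported function; and (iii) Type II bilinear sums
$$
\sum_{a \sim A}\sum_{b \sim B} \alpha_a\,\beta_b\,K(ab),
$$
with $AB \asymp X$, intermediate-sized $A, B$, and bounded coefficients. This decomposition is structurally identical to the one used for $\Lambda$ in the proof of Theorem \ref{primesumthm}.

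The Type I sums are controlled by completion of the inner sum followed by Weil-type bounds for the resulting complete sums of $K$ over $\Fp$; these exhibit square-root cancellation because $\mcF$ is not exceptional. The Type II sums constitute the main obstacle: by Cauchy--Schwarz in the $b$-variable they reduce to correlation sums $\sum_{x} K(ax)\,\overline{K(a'x)}$ for $a \neq a'$ in $\Fp^\times$, and the non-exceptionality of $\mcF$ ensures that the associated correlation sheaf on $\Aa^1_{\Fp}$ is not geometrically trivial. The Riemann Hypothesis over finite fields, applied to this sheaf, then yields the required square-root cancellation. This is precisely the estimate already supplied in the proof of Theorem \ref{primesumthm}, and no new algebro-geometric input is needed for the Möbius variant.

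Finally, balancing $U$ and $V$ against the various error terms yields the exponent $\eta < 1/24$ and the smoothed bound $QX(1+p/X)^{1/6}p^{-\eta}$. For the sharp-cutoff version, one approximates $\mathbf{1}_{[1,X]}$ from above and below by smooth functions satisfying \eqref{Vcond} with transition on scale $X/Q$, bounds the transitional contribution trivially using $|K(n)| \leq \cond(\mcF)$, and optimizes $Q$; this standard device loses a square root in the exponent and produces the factor $(1+p/X)^{1/12}p^{-\eta/2}$ stated in the theorem.
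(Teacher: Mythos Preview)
Your high-level plan—run the proof of Theorem~\ref{primesumthm} with Heath--Brown's identity for $\mu$ in place of the one for $\Lambda$—is exactly what the paper does. But your account of the bilinear inputs has a real gap: you propose to control the resulting sums using only Type~I estimates (completion of a single smooth variable, i.e.\ \eqref{typeIeq}) and Type~II estimates \eqref{typeIIeq}. The paper's proof of Theorem~\ref{primesumthm} does not use Type~I at all; the two inputs feeding into the optimization of Proposition~\ref{pr-optimize} are the Type~II bound and the Type~$I_2$ bound of Theorem~\ref{typeIsumthm}, which handles sums with \emph{two} long smooth variables via the Eisenstein-series method of \S2. The exponent $1/24$ comes precisely from balancing these two, $\delta = 1/8 - 2\delta$.

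Without Type~$I_2$, your argument collapses in the configuration where (for $X\approx p$, say) the two largest smooth variables $n_1,n_2$ are each of size about $p^{1/2}$ and everything else is negligible. Type~I on a single $n_i$ gives no saving since $N_i\approx p^{1/2}$ is only at the P\'olya--Vinogradov threshold; you cannot merge $n_1 n_2$ into one Type~I variable and complete, because the resulting divisor-like weight is not periodic modulo $p$; and for any Type~II grouping the term $p^{1/4}/N^{1/2}$ in \eqref{typeIIeq} is of size $1$ when $N\approx p^{1/2}$. Theorem~\ref{typeIsumthm} is the device that rescues this case, and is the main new ingredient of the paper (see \S1.5). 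A smaller point: your Type~II correlation sums omit the additive twist $e(hx/p)$ arising from completion (cf.\ \eqref{eq-correl}); the correct input is Proposition~\ref{correlationprop}, which bounds the set of bad pairs $(m,h)\in\Fpt\times\Fp$, not merely bad ratios $m$.
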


\begin{remark}
  The discrepancy in the power-saving exponents between the smoothed
  and unsmoothed sums in Theorem \ref{primesumthm} and
  \ref{moebiussumthm} is due to the growth of the Sobolev norms of the
  smooth functions approximating the characteristic function of the
  interval $[1,X]$, which is measured by the parameter $Q$ of the
  smoothed sums. This dependency shows up in the treatment of the sums
  of Type $I$ and $I_2$ below, through Theorem \ref{typeIsumthm} and
  the shape of the packet of Eisenstein series involved in the
  analysis. Any improvement in the exponent of the parameter $Q_W$ in
  these statements will yield an improvement for the unsmoothed sums,
  but it is also quite possible that other more advanced arguments
  could give such improvements. One should also remark however that
  the smallest range of $X$ for which the sums can be bounded
  non-trivially ($X\approx p^{3/4}$) is the same for the smoothed and
  unsmoothed sums (for a fixed smooth function $V$).
\end{remark}

\begin{remark} 
  Jean Bourgain pointed out that Theorem 2 of~\cite{BSZ} (along with the
  Note following it) can be used in conjonction with~(\ref{eq-correl})
  and Proposition~\ref{correlationprop} of the present paper to prove
  that if $K$ is an isotypic trace weight, we have, for any $\eps>0$ and for any $X\geq p^{1/2+\eps}$
$$
\sum_{n \leq X}\mu(n)K(n)=o(X),
$$
where the implicit constant depends on $\cond(\mcF)$ and $\eps$.  However, this
approach does not seem to yield a power saving, and does not seem to
apply if $\mu$ is replaced by $\Lambda$ or by the characteristic
function of the primes.
\end{remark}

\begin{remark}
  This theorem expresses an orthogonality property (i.e., the absence
  of correlation) between the M\"obius function and any isotypic trace
  weight modulo $p$ with bounded conductor. This fits with the
  philosophy of the ``M\"obius randomness principle'', formulated
  vaguely in~\cite[p. 338]{KI}, and with Sarnak's recent precise
  formulation in terms of orthogonality of the M\"obius function
  against function with low complexity, in the sense of entropy
  (see~\cite{sarnak}). Our result is in a slightly different context
  than Sarnak's conjecture, however, since the trace weights $K(n)$
  are defined modulo $p$, and an asymptotic statement follows only by
  taking, for each $p$, a different weight with some bound on the
  complexity, as measured by the conductor in our case.  Thus, our
  results are closer in spirit to those of Green~\cite{green} and
  Bourgain~\cite{bourgain}, which prove asymptotic orthogonality of
  the M\"obius function against, respectively, bounded depth boolean
  functions, and monotone boolean functions on binary hypercubes
  $\{0,1\}^N$ identified with $\{1,\ldots, 2^N\}$.
\par
In fact, it seems to be a very intriguing question (suggested by
Peter Sarnak) to understand which functions can arise as small linear
combinations of trace functions of low conductor (which, by linearity,
still satisfy Theorems~\ref{primesumthm}
and~\ref{moebiussumthm}). Another natural question is whether trace
functions have low complexity in a more algorithmic sense, and this
does not seem to be easy to answer. Only functions such as
$$
K(n)=e\Bigl(\frac{P(n)}p\Bigr) \text{ or }
K(n)=\Bigl(\frac{P(n)}p\Bigr)
$$
(for $P(X)\in\Zz[X]$ fixed and $(\frac\cdot p)$ the Legendre symbol)
seem to be obviously of low complexity for most meaning of the term,
and such functions as
$$
K(n)=\hypk_2(n;p)
$$
are far from being understood in this respect. One can certainly
expect the class of trace weights (and linear combinations with small
coefficients) to be very rich and fascinating (in this respect, there
are already hints in Deligne's conjecture about the number of trace
functions with various conditions, see
\cite{deligne-drinfeld,EK,FKM1.5} for this topic.)
\end{remark}

\subsection{First applications}

We present here some corollaries of Theorem~\ref{primesumthm} which
are obtained by applying it to specific weights $K$. This is only a
selection and we expect many more applications. We leave to the reader
to write down the corresponding statements involving the M\"obius
function, which follow from Theorem~\ref{moebiussumthm}.
\par
In the first result, we obtain a power saving in the sum of the error
term in the prime number theorem in arithmetic progressions over
residues classes modulo a prime which are values of some fixed
polynomial. Precisely, we define $E(X;p,a)$ by
$$
\pi(X;p,a) =\frac{\delta_p(a)}{\varphi(p)}\pi(X)+E(X;p,a),
$$
where $\delta_p(a)=0$ if $(a,p)\not=1$ and is $1$ otherwise.
\par
We will prove in \S \ref{sec:primepolynomials} :

\begin{corollary}\label{cor-poly-error-terms}
  Let $P\in \Zz[X]$ be a polynomial whose reduction modulo $p$ is
  squarefree and non-constant.
\par
\emph{(1)} We have
$$
\sum_{n\in\Fp}{E(X;p,P(n))}\ll X(1+p/X)^{1/12}p^{-\eta}
$$
for any $\eta<1/48$, where the implicit constant depends only on $\eta$
and $\deg P$.
\par
\emph{(2)} We have
$$
\sum_{a\in P(\Fp)}{E(X;p,a)}\ll X(1+p/X)^{1/12}p^{-\eta}
$$
for any $\eta<1/48$, where the implicit constant depends only on $\eta$
and $\deg P$.
\end{corollary}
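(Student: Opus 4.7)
The plan is to reduce each sum to a sum over primes of an isotypic trace function, to which Theorem~\ref{primesumthm} then applies. We focus on part~(1); part~(2) is structurally similar, with the Galois closure of $\pi : n \mapsto P(n)$ supplying the necessary sheaf-theoretic input.

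Setting $M(c) = \#\{n \in \Fp : P(n) \equiv c \pmod{p}\}$, swapping the order of summation yields
\[
\sum_{n \in \Fp} \pi(X;p,P(n)) = \sum_{q \leq X,\, q \text{ prime}} M(q \bmod p),
\]
and $\sum_{n \in \Fp} \delta_p(P(n)) = p - M(0)$. Since the reduction of $P$ modulo $p$ is squarefree of degree $d = \deg P$, one has $M(0) \leq d$, and combining these identities gives
\[
\sum_{n \in \Fp} E(X;p,P(n)) = \sum_{q \leq X,\, q \text{ prime}} \bigl(M(q \bmod p) - 1\bigr) + O_{d}\bigl(\pi(X)/p + 1\bigr).
\]
The function $q \mapsto M(q\bmod p) - 1$ is a trace function: if $\pi : \Aa^1_{\Fp} \to \Aa^1_{\Fp}$ denotes $n \mapsto P(n)$, the direct image $\pi_{*}\bar{\Qq}_{\ell}$ is lisse of rank $d$ outside the at most $d-1$ critical values of $P$, pointwise pure of weight zero, and contains a copy of the constant sheaf $\bar{\Qq}_{\ell}$; the complement $\mcF$ has rank $d-1$, trace function $M(q) - 1$, and, after taking its middle extension to $\Aa^1$, analytic conductor bounded linearly in $d$ by standard estimates for Swan conductors of pushforwards along tame covers.

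Decomposing $\mcF$ geometrically into isotypic summands $\mcF = \bigoplus_i \mcF_i$, each $\mcF_i$ is an isotypic trace sheaf of conductor $O_{d}(1)$. For each non-exceptional $\mcF_i$ with trace function $K_i$, the unsmoothed bound in Theorem~\ref{primesumthm} gives
\[
\sum_{q \leq X,\, q \text{ prime}} K_i(q) \ll X(1+p/X)^{1/12} p^{-\eta'/2}
\]
for any $\eta' < 1/24$; taking $\eta = \eta'/2$ yields the claimed range $\eta < 1/48$, and summing over the $O_d(1)$ components closes part~(1). Potentially exceptional summands arise in low-degree cases --- most notably $d = 2$, where $M(q) - 1 = \chi_{2}(L(q))$ for a linear polynomial $L$ and $\chi_{2}$ the Legendre symbol --- and are controlled by classical Burgess- and Vinogradov-type bounds on character sums over primes, which furnish comparable savings in the range $X \geq p^{3/4+\eps}$ where the bound is non-trivial.

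For part~(2), the characteristic function $\mathbf{1}_{a \in P(\Fp)}$ plays the role of $M(a)$. Passing to the Galois closure $\tilde\pi : \tilde Y \to \Aa^1$ of $\pi$, with Galois group $G$ and stabilizer $H$ of a point in a generic fibre, the image-indicator corresponds to the class function $\sigma \mapsto \mathbf{1}_{\sigma \text{ fixes some coset of } H}$ on $G$, which decomposes as a $\Qq$-linear combination of irreducible characters of $G$; each nontrivial character corresponds to an isotypic direct summand of $\tilde\pi_{*}\bar{\Qq}_{\ell}$, to which the argument above applies, while the trivial character supplies the main term $|P(\Fp)|\pi(X)/p$ cancelling the expected value up to acceptable error. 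The main technical obstacle throughout is the explicit verification of the geometric-isotypic decomposition together with polynomial control of the analytic conductors, and the separate treatment by classical analytic methods of any exceptional summand (geometrically of the form $\sheaf{L}_{\chi}\otimes\sheaf{L}_{\psi}$), since these fall outside the scope of Theorem~\ref{primesumthm}.
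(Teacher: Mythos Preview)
Your overall strategy matches the paper's: rewrite the sum as $\sum_{q\leq X}(M(q)-1)$ plus a negligible error, recognise $M(\cdot)-1$ as the trace function of (the non-constant part of) $\pi_*\bar{\Qq}_\ell$, decompose into isotypic pieces, and apply Theorem~\ref{primesumthm}. The Galois-closure treatment you sketch for part~(2) is likewise exactly what the paper does (Proposition~\ref{pr-decomp-poly}).

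The gap is in your handling of exceptional summands. You never actually use the hypothesis that $P$ is squarefree, and this hypothesis is precisely what rules out exceptional factors. Since $p\nmid |G|$, every $\Lambda_\rho$ is tame, so no Artin--Schreier component can occur. Squarefreeness of $P$ means $\gcd(P,P')=1$, hence $0$ is not a critical value of $P$; therefore every $\Lambda_\rho$ is lisse at $0$ and cannot contain a Kummer sheaf $\sheaf{L}_\chi$ (which is ramified at $0$) as a Jordan--H\"older factor. Thus \emph{no} exceptional summand arises, and Theorem~\ref{primesumthm} applies to every piece. Your $d=2$ example illustrates this rather than contradicting it: for squarefree $P$ one gets $M(q)-1=\chi_2(L(q))$ with $L$ linear and $L(0)=\mathrm{disc}(P)\not\equiv 0\mods p$, so the associated sheaf is ramified at $-L(0)/L'(0)\neq 0$, not at $0$, and is therefore \emph{not} exceptional; Theorem~\ref{primesumthm} (or Corollary~\ref{cor-mult-car}) already covers it.

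Your proposed fallback---``classical Burgess- and Vinogradov-type bounds on character sums over primes''---would not rescue a genuinely exceptional weight in any case. For $K(q)=\chi(q)$ with $\chi$ a non-trivial Dirichlet character modulo $p$, no unconditional power saving of the shape $p^{-\eta}$ is known; obtaining one is tantamount to a zero-free strip for $L(s,\chi)$. This is exactly why exceptional weights are excluded from Theorem~\ref{primesumthm}, and why the paper remarks that for $P=X^2$ (which is \emph{not} squarefree) the sum reduces to $\sum_{q\leq X}\bigl(\tfrac{q}{p}\bigr)$, which cannot be bounded this way without a quasi-Riemann Hypothesis. So the squarefree assumption is doing real work, and your proof needs to invoke it explicitly to exclude exceptional factors rather than attempt to treat them separately.
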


Note that this corollary is trivial if $P$ is linear.  The restriction
to a squarefree polynomial could be relaxed, but some condition is
needed in the current state of knowledge since for $P=X^2$ we have the
interpretation
$$
\sum_{n\in\Fp}{E(X;p,n^2)}=
\sum_{q\leq X}\Bigl(\frac{q}{p}\Bigr)+O(1)
$$
in terms of average of the Legendre symbol over primes, from which we
cannot get power saving without using a quasi-Riemann Hypothesis for the
corresponding $L$-function.
\par
On the other hand, if we take $P=X^2-1$, the study of either of these
sums becomes equivalent to that of the sum
$$
\sum_{q<X}{\chi(q+1)}
$$
where $q$, again, runs over primes. This was estimated, as we
recalled, by Karatsuba~\cite{Kar}. 
\par
We can now generalize considerably this result of Karatsuba:

\begin{corollary}[Character sums over polynomially-shifted primes]
  \label{cor-mult-car}
  Let $f=P/Q$ be a rational function represented as a ratio of
  integral polynomials. Let $\chi$ be a non-trivial Dirichlet
  character of prime modulus $p$ and order $h\geq 2$. Assume that $f$
  modulo $p$ is not of the form
$$
cX^kg(X)^h
$$
for some $c\in\Fp^{\times}$, some $k\in\Zz$ and some
$g(X)\in\Fp(X)$. We then have
\begin{align*}
  \sum_{q\ \text{prime}}{\chi(f(q))V(q/X)}&\ll
  X(1+p/X)^{1/6}p^{-\eta}\\
  \sum_\stacksum{q\ \text{prime}}{q\leq X}{\chi(f(q))}&\ll
  X(1+p/X)^{1/12}p^{-\eta/2}
\end{align*}
for any $\eta<1/24$, where the implicit constant depends only on
$\eta$, $V$ and the degrees of $P$ and $Q$.
\end{corollary}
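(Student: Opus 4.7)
The plan is to apply Theorem~\ref{primesumthm} to the weight $K(n) = \chi(f(n))$, extended by zero at the finitely many $n$ where $f$ has a zero or a pole modulo $p$. The work amounts to exhibiting an underlying trace sheaf, bounding its conductor uniformly in $p$, and verifying that it is not exceptional under the stated hypothesis on $f$.

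Let $\mcL_\chi$ denote the Kummer sheaf on $\Gm/\Fp$ attached to $\chi$, and let $\mcF$ be the middle extension to $\Aa^1_{\Fp}$ of the pullback $[f]^\ast \mcL_\chi$ along the rational map induced by $f$ (defined away from the zeros and poles of $f$). Then $\mcF$ is a rank-one $\ell$-adic sheaf, hence geometrically irreducible and a fortiori isotypic, pointwise pure of weight $0$. Its trace function agrees with $K$ away from the set $Z\subset\Fp$ of zeros and poles of $f$, which has size at most $\deg P + \deg Q$; the two sums in the corollary thus differ from those in Theorem~\ref{primesumthm} by $O(\deg P + \deg Q)$, which is absorbed in the error. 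Since $\mcL_\chi$ is tame on $\Pp^1$, so is $\mcF$, hence all Swan conductors vanish and the singularities are contained in $Z\cup\{\infty\}$, giving $\cond(\mcF) \leq \deg P + \deg Q + 3$.

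The only substantive step is that $\mcF$ is not exceptional. Suppose for contradiction that $\mcF$ is geometrically isomorphic to $\mcL_\eta\otimes\mcL_\psi$ for some multiplicative $\eta$ and additive $\psi$. Tameness of $\mcF$ at $\infty$, combined with the wildness of $\mcL_\psi$ at $\infty$ when $\psi\ne 0$, forces $\psi=0$, so $\mcF\cong\mcL_\eta$ geometrically. Comparing local monodromies, which for both sheaves factor through $\mu_h$ with $h=\ord(\chi)$, forces $\eta = \chi^k$ for some $k\in\Zz$. The standard classification of tame rank-one sheaves then gives $\mcL_{\chi(f)}\cong\mcL_{\chi(X^k)}$ geometrically if and only if $f/X^k$ is an $h$-th power in $\ov{\Fp}(X)^\times$ up to a constant; by Galois descent (using $\mu_h\subset\Fp^\times$ since $\chi$ has order $h$ modulo $p$), this is equivalent to $f\equiv c X^k g(X)^h\pmod p$ for some $c\in\Fp^\times$ and $g\in\Fp(X)$. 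This is exactly the case excluded by hypothesis, so $\mcF$ is non-exceptional.

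Theorem~\ref{primesumthm} applied to $\mcF$ then delivers both estimates of the corollary: the parameter $Q$ in the smoothed bound is absorbed into the implicit constant since $V$ is fixed, while the unsmoothed bound is a direct transcription. The only real obstacle is the geometric non-exceptionality verification; everything else is bookkeeping about conductors and finitely many exceptional points.
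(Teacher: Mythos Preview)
Your proof is correct and follows essentially the same route as the paper's: both identify $K(n)=\chi(f(n))$ as the trace function of the tame rank-one sheaf $\mcL_{\chi(f)}$, bound its conductor in terms of $\deg P+\deg Q$, and reduce non-exceptionality to the claim that a geometric isomorphism $\mcL_{\chi(f)}\simeq\mcL_{\chi'}\otimes\mcL_{\psi}$ forces $\psi$ trivial (by tameness at $\infty$) and then $f\equiv cX^kg^h$. The paper's version is slightly more direct in that it never needs to identify $\chi'$ as a power of $\chi$: it simply observes that $\mcL_{\chi(f)}\simeq\mcL_{\chi'}$ forces $\mcL_{\chi(f)}$ to be lisse on $\Gm$, hence every zero or pole of $f$ away from $0,\infty$ has order divisible by $h$, which already yields the factorization $f=cX^kg^h$. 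Your extra step pinning down $\eta=\chi^k$ and invoking Galois descent is correct but not required.
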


\begin{proof} 
  We will show that Theorem~\ref{primesumthm} is applicable. We first
  recall (see \cite[Chap. 4]{GKM}) that an Artin-Schreier sheaf
  $\mcL_\psi$ (for $\psi$ non trivial) is wildly ramified at $\infty$
  and unramified on $\Aa^1_{\Fp}$, while a Kummer sheaf $\mcL_{\chi}$
  (for $\chi$ non-trivial) is tamely ramified at $0$ and $\infty$ and
  unramified on ${\Gm}_{,\Fp}$. Both types of sheaves have rank $1$.
\par
The weight $K(n)=\chi(f(n))$ is the trace function of the tame, Kummer
sheaf $\sheaf{L}_{\chi(f)}$ which has (at most) $\deg P+\deg Q$
singularities, hence conductor bounded in terms of $\deg P$ and $\deg
Q$. Theorem~\ref{primesumthm} therefore applies when
$\sheaf{L}_{\chi(f)}$ is not exceptional. We now determine when this
is so.
\par
The sheaf $\sheaf{L}_{\chi(f)}$ is tamely ramified at
$x\in\Pp^1(\ov\Fp)$ if and only if $x$ is a zero or a pole of $f$ of
order not divisible by $h$. This implies that if $\sheaf{L}_{\chi(f)}$
is geometrically isomorphic to some $\mcL_\psi\otimes\mcL_{\chi'}$
then $\psi$ is trivial (otherwise $\sheaf{L}_{\chi(f)}$ would be
wildly ramified at $\infty$) and the zeros or poles of $f$ distinct
from $0$ and $\infty$ have order divisible by $h$; this means
precisely that $f$ is of the form $cX^kg(X)^h$.
\end{proof}

One can unify this corollary with Theorem~\ref{primesumthmFM} for
$$
K(n)=\chi(f(n))e\Bigl(\frac{g(n)}{p}\Bigr),
$$
getting cancellation for sums over primes provided $f$ (resp. $g$)
satisfies the assumptions of Corollary~\ref{cor-mult-car}
(resp. Theorem~\ref{primesumthmFM}). 

\subsection{Kloosterman sums at prime arguments}

Our last application involves the weights $K(n)$ which are related to
Kloosterman or hyper-Kloosterman sums $\hypk_m$. We first spell out
two very specific corollaries for the standard Kloosterman sum in one
variable:

\begin{corollary}  
  For every $0 < \eta < 1/48$ there exists $C(\eta)$ such that for
  every $p$, every $X\geq 2$ and every integer $n$ coprime with $p$,
  one has the inequalities
$$
\Bigl\vert \, \sum_{q<X,\, q\text{ prime} }\hypk_2(nq;p)\log q
\,\Bigr\vert \leq C(\eta) X(1+p/X)^{1/12}p^{-\eta}
$$
and
$$
\Bigl\vert \, \sum_{q<X, \, q\text{ prime
  }}\hypk_2(n^2q^2;p)e\Bigl(\frac{2nq}p\Bigr)\log q \,\Bigr\vert \leq
C(\eta) X(1+p/X)^{1/12}p^{-\eta}.
$$
\end{corollary}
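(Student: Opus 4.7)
The plan is to apply the bound~\eqref{primesuminterval} of Theorem~\ref{primesumthm} to the two isotypic trace weights
$$
K_1(x) = \hypk_2(nx; p), \qquad K_2(x) = \hypk_2(n^2 x^2; p)\,e(2nx/p),
$$
and to remove the $\log q$ weight at the end by partial summation. The weight $K_1$ is the trace function of $\mcF_1 = [n]^{*}\HYPK_2$, a multiplicative dilate of the normalized Kloosterman sheaf of~\cite{GKM}; since $(n,p)=1$, this is again a geometrically isotypic, pointwise pure, rank-$2$ sheaf whose conductor equals that of $\HYPK_2$ (namely $5$). Similarly, $K_2$ is the trace function of $\mcF_2 = [n]^{*}([\mathrm{sq}]^{*}\HYPK_2 \otimes \mcL_{\psi(2x)})$, where $[\mathrm{sq}]$ denotes the squaring map on $\Gm$, and a quick ramification computation shows that its conductor is also bounded by an absolute constant.

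The key point of the plan is to verify that neither $\mcF_i$ is exceptional. Since an exceptional isotypic sheaf decomposes geometrically into copies of a rank-$1$ tensor $\mcL_{\chi}\otimes\mcL_{\psi}$, it is enough to show that each $\mcF_i$ is geometrically irreducible of rank $2$. For $\mcF_1$ this is Katz's determination of the geometric monodromy group of $\HYPK_2$ as the full $\SL_2$ in~\cite{GKM}. For $\mcF_2$, I would argue that the squaring map sends $\pi_1^{\mathrm{geom}}(\Gm)$ onto a subgroup of index $2$ whose image under the Kloosterman representation remains Zariski-dense in $\SL_2$ (as $\SL_2$ is connected and admits no proper closed subgroup of full dimension); hence $[\mathrm{sq}]^{*}\HYPK_2$ is still geometrically irreducible of rank $2$, and twisting by the rank-$1$ sheaf $\mcL_{\psi(2x)}$ and pulling back by the automorphism $[n]$ preserve this.

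With these inputs, Theorem~\ref{primesumthm} gives, uniformly in $Y\geq 2$,
$$
\sum_\stacksum{q\ \text{prime}}{q \leq Y} K_i(q) \ll Y(1 + p/Y)^{1/12} p^{-\eta/2}
$$
for any $\eta<1/24$, the implicit constant depending only on $\eta$ (since the conductors of $\mcF_1,\mcF_2$ are absolutely bounded and the theorem's dependence on the conductor is polynomial). Abel summation against $\log t$ then produces the bound claimed in the corollary; the resulting $\log X$ factor is trivially absorbed by shrinking $\eta$ a bit, leaving any exponent less than $1/48$ as required. The principal obstacle in this plan is the irreducibility check for $\mcF_2$: because $\hypk_2(m^2;p)\,e(2m/p)$ admits a partial closed-form evaluation of Sali\'e type (for instance one can compute its restriction to $\Gm$ as a Fourier transform of $\mcL_{\chi_2(u(u-4))}$), one must rule out that the sheaf collapses to the exceptional shape $\mcL_\chi\otimes\mcL_\psi$, and the Zariski-density argument for the monodromy of $[\mathrm{sq}]^{*}\HYPK_2$ is exactly what rules this out.
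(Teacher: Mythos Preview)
Your approach is correct and is precisely what the paper intends: the corollary is stated without proof as a direct application of Theorem~\ref{primesumthm}, and your monodromy argument (the image of an index-$2$ normal subgroup remains Zariski-dense in the connected group $\SL_2$, so $[\mathrm{sq}]^*\HYPK_2$ is still geometrically irreducible of rank $2$) is exactly the right way to verify that $\mcF_2$ is non-exceptional. One small simplification: since the proof of Theorem~\ref{primesumthm} in \S4 actually bounds the $\Lambda$-weighted smooth sum $\sum_n \Lambda(n)K(n)V(n/X)$, the $\log q$ weight is already built in, and the sharp-cutoff argument of \S\ref{ssec-intervals} then yields the corollary directly, avoiding Abel summation and any stray $\log X$ factor.
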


These two bounds improve~\cite[Lemmas 6.1, 6.2, 6.3]{ILS} when $c=p$
is a prime and when $X$ is near and possibly a bit smaller than $p$
(these results were proved in~\cite{ILS} assuming the Generalized
Riemann Hypothesis for Dirichlet characters). Using the methods of
\cite{ILS}, one can use the second bound and the Petersson formula to
increase the size of the support of the Fourier transform $\widehat
\Phi$ of the test functions $\Phi$ in the problem of computing the
distribution of low-lying zeros (average $1$-level density) of the
symmetric square $L$-functions $L(\mathrm{sym}^2f,s)$ for $f$ in the
family of holomorphic newforms of prime level $p\ra +\infty$ and
weight $k$: with notation as in~\cite{ILS} (except that they denote
the level by $N$), \emph{there exists $\delta>0$ such that for any
  $\Phi\in\mathcal{S}(\Rr)$ with the support of $\widehat\Phi$ in
  $]-1/2-\delta,1/2+\delta[$, one has}
 $$
\lim_{p\ra\infty}\frac{1}{|H^{\star}_k(p)|}\sum_{f\in
  H^{\star}_k(p)}D(\mathrm{sym}^2f,\Phi)=\int_\Rr
\Phi(x)W(\mathrm{Sp})(x)dx.
$$
\par
The possibility of such an improvement was known to the authors of
\cite{ILS} (see \cite[Remark C, p. 61]{ILS}), though their method was
different.

The consideration of powers of hyper-Kloosterman sums allows us to
strengthen the results~\cite{MichelInv,MichelDMJ} concerning the
existence of hyper-Kloosterman sums with large absolute value modulo a
product of two primes:

\begin{corollary}\label{largesums} 
For any $m\geq 2$,
there exists a constant $\alpha_m>0$ such that
$$
\sum_{c\leq X}\Lambda_2(c)|\hypk_m(1;c)|\geq (\alpha_m+o_m(1))X\log X,
$$
were $\Lambda_2(c)=(\mu\star\log^2)(c)$ denotes the von Mangoldt
function of order $2$, which is supported over integers with at most
two prime factors.
\end{corollary}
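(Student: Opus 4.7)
The plan is a Hölder reduction to a second moment, followed by extraction of the main term from products of two primes using Theorem~\ref{primesumthm}. Set $M_k := \sum_{c \leq X} \Lambda_2(c) |\hypk_m(1;c)|^k$. On the support of $\Lambda_2$ (integers with at most two distinct prime factors), Deligne's Weil bound together with the Chinese Remainder Theorem multiplicativity
$$
\hypk_m(1;pq) = \hypk_m(\bar q^m;p)\,\hypk_m(\bar p^m;q) \qquad (p \neq q)
$$
yield a uniform bound $|\hypk_m(1;c)| \leq A_m$ with $A_m \leq m^2$. Hence $M_3 \leq A_m M_2$, and the Cauchy--Schwarz inequality $M_2^2 \leq M_1 M_3$ reduces the corollary to the lower bound $M_2 \gg_m X \log X$.

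To extract this lower bound from the $c = pq$ contribution (the dominant part of $\sum_c \Lambda_2(c) \sim X \log X$), I would use the geometric decomposition
$$
|\hypk_m(n;p)|^2 = 1 + h(n;p),
$$
where $h(\cdot;p)$ is the trace function of the non-trivial isotypic component $\mathcal H$ of $\hk{m} \otimes \overline{\hk{m}}$ (the adjoint Kloosterman sheaf). By Katz's monodromy results in~\cite{GKM}, $\mathcal H$ is geometrically irreducible of rank $m^2 - 1$ with conductor bounded in terms of $m$. Expansion yields
$$
|\hypk_m(1;pq)|^2 = 1 + h(\bar q^m;p) + h(\bar p^m;q) + h(\bar q^m;p)\,h(\bar p^m;q),
$$
and summation of the leading ``$1$'' against $2 \log p \log q$ over $pq \leq X$, $p \neq q$, gives $X \log X + o(X \log X)$ by the prime number theorem.

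The two linear cross-terms fit directly into the scope of Theorem~\ref{primesumthm}: for fixed $p \leq X^{1/2}$, the function $q \mapsto h(\bar q^m;p)$ is an isotypic trace function modulo $p$, and it is not exceptional since $\mathcal H$ is irreducible of rank $m^2 - 1 \geq 3$, hence not geometrically isomorphic to a sum of rank-one Kummer-times-Artin--Schreier sheaves. The power-saving of Theorem~\ref{primesumthm}, summed over $p$, produces a total contribution $O(X^{1-\delta})$. The principal obstacle is the bilinear cross-term
$$
T := \sum_{\substack{pq \leq X\\p \neq q}} \log p \log q \cdot h(\bar q^m;p)\,h(\bar p^m;q),
$$
in which both trace-function factors depend on both primes, with the modulus itself varying across the sum. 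A direct Cauchy--Schwarz bound already saturates at the order $X \log X$, so showing $T = o(X \log X)$ requires a further sheaf-theoretic decomposition of $|h|^2$ into its trivial and traceless components within $\mathcal H \otimes \overline{\mathcal H}$, followed by an iterated application of Theorem~\ref{primesumthm}, with careful bookkeeping of main terms and signs. Controlling this bilinear sum is the technical heart of the argument.
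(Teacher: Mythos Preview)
Your reduction to a second moment is sound, and the main term and the two ``linear'' cross-terms are handled correctly, but the bilinear term $T$ is a genuine obstruction that your proposed fix does not address. The sum $T$ involves the product $h(\bar q^m;p)\,h(\bar p^m;q)$ of two trace functions living modulo \emph{different} primes; it is not of the form $|h|^2$, so decomposing $\mathcal{H}\otimes\overline{\mathcal{H}}$ does not act on $T$ at all. If instead you apply Cauchy--Schwarz to $T$ and then decompose the resulting $|h(\cdot;p)|^2$ as $1+h_2(\cdot;p)$, Theorem~\ref{primesumthm} does kill the $h_2$-part, but the surviving ``$1$'' gives exactly $|T|\le X\log X(1+o(1))$, which cancels your main term and leaves nothing. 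There is no evident way to iterate further: the two moduli are genuinely coupled, and a lower bound for $\sum A_{p,q}B_{p,q}$ with $A_{p,q}=|\hypk_m(\bar q^m;p)|^2$, $B_{p,q}=|\hypk_m(\bar p^m;q)|^2$ cannot be extracted from the marginal averages $\sum A_{p,q}\sim N$, $\sum B_{p,q}\sim N$ alone. (Incidentally, for even $m$ the monodromy is $\Sp_m$ and $\mathcal{H}$ is not irreducible of rank $m^2-1$, though it is still a sum of non-exceptional irreducibles; this is minor.)

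The paper sidesteps the bilinear coupling entirely by a pigeonhole argument on sets rather than a moment computation. Using Theorem~\ref{primesumthm} applied to the trace functions $\tr\rho(g^\natural_m(\cdot;p))$ for every non-trivial irreducible representation $\rho$ of the monodromy group, one proves a Sato--Tate law for the classes $g^\natural_m(\bar q^m;p)$ over pairs $(p,q)$ in a box $[P,2P]\times[Q,2Q]$ with $P^{3/4+\delta}\le Q\le P^{4/3-\delta}$. Choosing $\alpha_m>0$ so that $\mu_m(|\tr|\ge\alpha_m)\ge 0.51$, the set $\mathcal{P}_1$ of pairs with $|\hypk_m(\bar q^m;p)|\ge\alpha_m$ has density $\ge 0.51$, and by the symmetric range the same holds for $\mathcal{P}_2=\{|\hypk_m(\bar p^m;q)|\ge\alpha_m\}$. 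Then $|\mathcal{P}_1\cap\mathcal{P}_2|\gg PQ/\log^2 P$, and on this intersection $|\hypk_m(1;pq)|\ge\alpha_m^2$ by twisted multiplicativity. Summing $\Lambda_2(pq)=2\log p\log q\gg\log^2 X$ over these pairs and over dyadic boxes gives the corollary. The point is that only the two \emph{marginal} distributions are needed, and these are exactly what Theorem~\ref{primesumthm} delivers.
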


This corollary shows that the normalized hyper-Kloosterman sums
$\hypk_m(1;c)$ whose modulus is a product of at most two primes have
their size $\gg_m 1$ for a positive proportion of such moduli (when
these are weighted by
$\Lambda_2$). In~\cite{Michelthese,MichelInv,MichelDMJ}, the
lower-bound was of order $X$,
and by adding the missing logarithmic factor, we obtain the right
order of magnitude. This answers a question of Bombieri to the third
author from 1996.  For $\Lambda_2$ replaced by $\Lambda_3$, a
corresponding (easier) statement was proven in \cite{FoMiPac}.
\par
Another potential application of this corollary (or rather of the
techniques used to prove it) is to reduce the value of the constant
$\omega$ in the following statement, which was first established by
Fouvry and Michel for $\omega=23$ in \cite{FMAnnals} subsequently
improved to $\omega=18$ by Sivak-Fischler~\cite{Siv1,Siv} and to
$\omega=15$ by Matom\"aki~\cite{Mat}:

\begin{theorem*} The sequence
$(\hypk_2(1;c))_{c\geq 1}$ changes sign infinitely often as $c$ varies
over squarefree moduli with at most $\omega$ prime factors.
\end{theorem*}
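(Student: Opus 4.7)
The plan is a contradiction argument of the shape pioneered in \cite{FMAnnals} and refined in \cite{Siv1,Siv,Mat}, fed with the present paper's estimates. Suppose the conclusion fails; then for some $c_{0}$ we have $\hypk_{2}(1;c) \geq 0$ for all squarefree $c \geq c_{0}$ with at most $\omega$ prime factors (the opposite sign being symmetric). Introducing the higher von Mangoldt function $\Lambda_{\omega} = \mu \star \log^{\omega}$, which is supported on integers having at most $\omega$ prime factors, this assumption yields
$$
\sum_{c \leq X} \Lambda_{\omega}(c)\, \hypk_{2}(1;c) \;=\; \sum_{c \leq X} \Lambda_{\omega}(c)\, |\hypk_{2}(1;c)| \;+\; O(1),
$$
and one then bounds the right-hand side from below and the left-hand side from above by incompatible quantities.

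For the lower bound, one extends Corollary \ref{largesums} from $\omega = 2$ to general $\omega$, obtaining
$$
\sum_{c \leq X} \Lambda_{\omega}(c)\, |\hypk_{2}(1;c)| \;\gg_{\omega}\; X \log^{\omega-1} X.
$$
The proof combines H\"older's inequality with an asymptotic evaluation of the second moment of $\hypk_{2}$ weighted by $\Lambda_{\omega}$ (via Plancherel at each prime factor and multiplicativity) and an upper bound on the fourth moment, the latter coming from Deligne's Riemann Hypothesis applied to the explicit sheaf whose trace function is $|\hypk_{2}|^{4}$.

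For the upper bound, Heath--Brown's identity decomposes $\Lambda_{\omega}$ into $O_{\omega}(\log^{\omega} X)$ Dirichlet convolutions of Type $I$ and Type $II$. Twisted multiplicativity of Kloosterman sums,
$$
\hypk_{2}(1; qd) \;=\; \hypk_{2}(\overline{d}^{\,2}; q)\, \hypk_{2}(\overline{q}^{\,2}; d),
$$
reduces each Type $II$ piece, after Fourier expansion in Dirichlet characters modulo $d$ of the $d$-dependent factor, to sums over primes $q$ of trace weights of the form $n \mapsto \chi(n)\, \hypk_{2}(n\overline{d}^{\,2}; q)$ modulo $q$. The corresponding sheaf is a Kummer twist of a pullback of Katz's Kloosterman sheaf $\hk{2}$; it is geometrically irreducible of rank $2$ and wildly ramified at $\infty$, hence never of the exceptional form $\mcL_{\chi'} \otimes \mcL_{\psi}$. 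Theorem \ref{primesumthm} therefore delivers a power saving in $q$ for each fixed $d$, and Theorem \ref{primesumthmFM} (or the earlier estimates of \cite{FMAnn}) handles the Type $I$ pieces.

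The principal obstacle is the numerical optimization. Theorem \ref{primesumthm} only saves a power of $q$ when $X \geq q^{3/4+\eps}$ and with exponent at most $1/24$, and the Type $I$ estimates have their own range of validity. Obtaining incompatible inequalities from the two sides thus constrains $\omega$ from below, via a careful balancing exercise between the sizes of Type $I$ and Type $II$ pieces in the spirit of \cite{Siv,Mat}, while tracking the polynomial dependence of the implied constants on the conductor and on the divisor-type weights $\tau_{\omega}(d)$ coming from the decomposition. The improvement in the saving exponent of the present work over that of Theorem \ref{primesumthmFM} should translate into a value of $\omega$ strictly smaller than the $\omega = 15$ of \cite{Mat}; making this explicit is the main technical task.
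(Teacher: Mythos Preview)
This theorem is not proved in the paper. It is stated in the introduction as a known result attributed to \cite{FMAnnals} ($\omega=23$), \cite{Siv1,Siv} ($\omega=18$) and \cite{Mat} ($\omega=15$); the paper merely remarks that its techniques constitute ``another potential application'' which might reduce the value of $\omega$, without carrying this out. There is therefore no proof in the paper against which to compare your proposal.

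That said, what you have written is not a proof either, but a programme. Your outline correctly reproduces the architecture of \cite{FMAnnals,Siv,Mat}: assume a fixed sign, compare a lower bound for $\sum\Lambda_\omega(c)|\hypk_2(1;c)|$ with an upper bound for $\sum\Lambda_\omega(c)\hypk_2(1;c)$ obtained after combinatorial decomposition and use of twisted multiplicativity. You also correctly identify that the relevant sheaves (Kummer twists of pullbacks of $\hk{2}$) are non-exceptional, so that Theorem~\ref{primesumthm} applies. But the entire content of the result lies in the ``numerical optimization'' you defer at the end: the cited papers devote most of their length to a delicate sieve-theoretic weighting (Selberg-type or asymptotic sieve) that is essential to push $\omega$ down, and which is absent from your sketch. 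Simply feeding the exponent $1/24$ from Theorem~\ref{primesumthm} into the existing machinery does not by itself yield a specific $\omega$, let alone one below $15$; indeed the paper's authors themselves stop short of claiming any explicit improvement. Your last sentence (``making this explicit is the main technical task'') is an honest acknowledgement that the proof has not been given.
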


\subsection{Principle of the proof of Theorem~\ref{primesumthm}: the
  combinatorics of sums over primes}

We start from a general perspective before explaining what features
are specific to our case and what our new ingredients are. Suppose we
are given some oscillatory arithmetic function $K$, bounded by $1$ in
modulus, some smooth function $V$, compactly supported in
$]0,+\infty[$ and some $X\geq 2$; we wish to obtain non-trivial bounds
for the sum
$$
\sum_{n}\Lambda(n)K(n)V\Bigl(\frac{n}X\Bigr),
$$
where $\Lambda$ denotes the von Mangoldt function. 
\par
Using Heath-Brown's identity (see, e.g.,~\cite[Prop. 13.3]{KI}) and a
smooth partition of unity, this sum decomposes essentially into a
linear combination of sums of the shape
\begin{multline}\label{eq-hb-sum}
\sumsum_{m_1,\cdots,m_k}\alpha_1(m_1)\cdots\alpha_k(m_k)\sumsum_{n_1,\cdots,n_k}
V_1(n_1)\cdots V_k(n_k)\\
V\Bigl(\frac{m_1\cdots m_k n_1\cdots
  n_k}{X}\Bigr)K(m_1\cdots m_k n_1\cdots n_k)
\end{multline}
for some integral parameter $k\geq 2$, where the $\alpha_i(m)$ are
essentially bounded arithmetic functions supported in dyadic intervals
(say $[M_i/2,M_i]$) of short range (i.e. $M_i\leq X^{1/k}$), whereas
the $V_i(n)$ are smooth functions supported in dyadic intervals with
arbitrary range (say, $[N_i/2,N_i]$ with $N_i\in[1/2,2X]$), and where
$$
\prod_i M_iN_i\asymp X.
$$ 
\par
We refer to the $n_i$ as the ``smooth'' variables and the $m_i$ as the
``non-smooth'' variables, as one is usually unable to exploit the
specific shape of the functions $\alpha_i$, except for the fact that
they are supported in short ranges.

Depending on which estimates and methods are available to bound these
sums, according to the location of the point
$(M_1,\cdots,M_k,N_1,\cdots,N_k)$ in the $2k$-dimensional cube
$[1/2,2X]^{2k}$, it is useful to classify them into different (not
necessarily disjoint) categories, based on the number of ``long''
smooth variables which are available:
\begin{itemize}
\item[-] If there is one very long smooth variable, say $n_1$, one
  usually speaks of {\em sums of type} $I$, with the remaining (smooth
  and non-smooth variables) combined together into a single non-smooth
  variable, $m$, which means that the original sum~(\ref{eq-hb-sum}) may be
  written
$$
\sum_{m\asymp M}\beta_m\sum_{n_1\asymp N_1}
V_1(n_1)V\Bigl(\frac{mn_1}X\Bigr)K(mn_1).
$$
\item[-] If there are two relatively long smooth variables, say $n_1,
  n_2$, one speaks of sums of type $I_2$; after combining the
  remaining (smooth and non-smooth variables) into a single non-smooth
  variable, the sum can now be rewritten
$$
\sum_{m\asymp M}\alpha_m\sumsum_\stacksum{n_1\asymp N_1}{n_2\asymp
  N_2} V_1(n_1)V_2(n_2)V\Bigl(\frac{mn_1n_1}X\Bigr)K(mn_1n_2).
$$
\item[-] And if there are three relatively long smooth variables, say
  $n_1, n_2, n_3$, we will speak of sums of type $I_3$, and so on.
 \end{itemize}
\par
This classification appears more or less explicitly in the
work~\cite{FouvryAM} of Fouvry, in the context of the average
distribution of primes in arithmetic progressions to large moduli. The
implementation of this strategy depends on the possibility of dealing
with the sums of type $I_r$ for $r$ as large as possible, a question
which becomes increasingly difficult as $r$ increases, since the range
of the smooth variables decreases.\footnote{\ For instance, in
  \cite{FouvryAM}, it is shown that one could prove results on the
  distribution of primes in long arithmetic progressions on average,
  beyond the Bombieri-Vinogradov Theorem, if one could treat the
  corresponding sums of type $I_r$ for $r=1$, \ldots, $6$. Currently,
  the sums of type $I_1$, $I_2$ and $I_3$ can be handled \cite{FrIw}.}
All remaining sums belong then to the class of {\em sums of type}
$II$. The most direct treatment of these sums --there may be other
treatments available, depending on the original problem-- consists in
combining these (short) variables in subsets to form variables with
larger ranges, in order to obtain bilinear forms involving two
non-smooth variables of the type
$$
\sum_{m\asymp M}\sum_{n\asymp N}\alpha_m\beta_n K(mn),\quad\text{
  where } MN\asymp X.
$$
\par
One can then ``smoothen'' one of the variables, say $n$, by an
application of the Cauchy-Schwarz inequality, leading to a quadratic
form with coefficients with {\em multiplicative correlation sums} of
the function $K$, namely
$$
\dblsum_{m_1,m_2}\ov{\alpha_{m_1}}\alpha_{m_2}
\sum_{n}\ov{K(m_1n)}{K(m_2n)}.
$$
\par
Notice here that the fact that the original variables are rather short
actually helps, since it offers some flexibility in the ways they may
be combined to tailor the relative ranges of $M$ and $N$. This is the
strategy we will follow in this paper.

\subsection{Sums of type $I_2$}

We can now come to our specific situation and explain our new results
for sums over primes of trace weights.  
\par
We will give estimates for sums of type $I$, $I_2$ and $II$. In fact,
the starting point of this work is a very general estimate for sums of
type $I_2$ (two long smooth variables of approximately equal size)
when $K$ is a trace weight, which follows relatively easily from the
results of our earlier paper~\cite{FKM}. Indeed, using Mellin
inversion, the estimation of sums of type $I_2$ can be reduced to that
of sums of the shape
\begin{equation}\label{eq-eisenstein-twist}
\tsum_{V,X}(it,K)=\sum_{n}K(n)d_{it}(n)V\Bigl(\frac{n}X\Bigr)
\end{equation}
where $t\in\Rr$, and (for any $u\in\Cc$) we denote by
$$
d_{u}(n)=d_{-u}(n)=\sum_{ab=n}\Bigl(\frac{a}{b}\Bigr)^{u}
$$
the twisted divisor function.
\par
We observe that the arithmetic function $n\ra d_{it}(n)$ is (up to
suitable normalization) the Fourier coefficient of the non-holomorphic
unitary Eisenstein series
$$
E(z,s)=\frac12\sum_{(c,d)=1}\frac{y^s}{|cz+d|^{2s}},
$$
for $s=\frac{1}{2}+it$. The main result of our previous paper
(\cite[Thm 1.2]{FKM}) is a universal non-trivial bound for the
analogue of $\tsum_{V,X}(it,K)$ where $d_{it}(n)$ is replaced with the
Fourier coefficients of a classical cusp form (either holomorphic or
not). We will extend the proof to Eisenstein series, obtaining the
following result:

\begin{theorem}[Algebraic twists of Eisenstein series]
  \label{eisensteinsumthm}
  Let $K$ be an isotypic trace weight associated to the $\ell$-adic
  sheaf $\mcF$ modulo $p$. Let $V$ be a smooth function
  satisfying~\emph{(\ref{Vcond})} with parameter $Q\geq 1$. If $\mcF$
  is not geometrically trivial, then for any $X\geq 1$, we have
$$
\tsum_{V,X}(it,K)= \sum_{n}K(n)d_{it}(n)V\Bigl(\frac{n}X\Bigr)\ll
(1+|t|)^AQ X\Bigl(1+\frac{p}{X}\Bigr)^{1/2}p^{-\eta}
$$
for any $\eta< 1/8$ and some $A\geq 1$ possibly depending on
$\eta$. The implicit constant depends only on $\eta$, on the implicit
constants in~\emph{(\ref{Vcond})}, and polynomially on the conductor
of $\sheaf{F}$.
\end{theorem}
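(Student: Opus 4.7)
My plan is to parallel the strategy of~\cite{FKM} for cusp-form twists, using the explicit factorisation
$$
d_{it}(n) = \sum_{ab=n}\Bigl(\frac{a}{b}\Bigr)^{it},
$$
which turns $\tsum_{V,X}(it,K)$ directly into a bilinear form in $(a,b)$. First I would separate the joint cutoff $V(ab/X)$ from the oscillating weights $a^{it}b^{-it}$ by Mellin inversion: since $V$ satisfies~\emph{(\ref{Vcond})} with parameter~$Q$, its Mellin transform $\tilde V(s)$ decays faster than any polynomial in $|{\Im s}|$ with constants polynomial in $Q$, and shifting contours picks up only polynomial growth in $(1+|t|)$. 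After smooth dyadic decomposition in $a$ and $b$ and the hyperbola method, matters reduce to bounding, uniformly over dyadic parameters $A,B$ with $AB\asymp X$ and $A\leq\sqrt X\leq B$, the bilinear sums
$$
S_{A,B} = \sum_{a}a^{it}W\Bigl(\frac{a}{A}\Bigr)\sum_{b}b^{-it}U\Bigl(\frac{b}{B}\Bigr)K(ab)
$$
for smooth compactly supported weights $W,U$.

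I would then treat each $S_{A,B}$ as a type~$I$ bilinear form in the long smooth variable $b$. Applying the Cauchy--Schwarz inequality in $b$ and expanding the square,
$$
|S_{A,B}|^2 \ll B\sumsum_{a_1,a_2}\Bigl(\frac{a_1}{a_2}\Bigr)^{it}W\Bigl(\frac{a_1}{A}\Bigr)W\Bigl(\frac{a_2}{A}\Bigr)\sum_{b}K(a_1b)\ov{K(a_2b)}\Bigl|U\Bigl(\frac{b}{B}\Bigr)\Bigr|^2,
$$
the problem becomes that of estimating the correlation sums of $K$ with itself at two multiplicative shifts modulo~$p$. Poisson summation in $b$ modulo~$p$, combined with Deligne's Riemann Hypothesis applied to the correlation sheaf $[\times a_1]^*\mcF\otimes[\times a_2]^*\ov{\mcF}$ (via Katz's $\ell$-adic Fourier transform, as in~\cite{FKM}), yields the off-diagonal bound $O(p^{1/2}(1+B/p))$, polynomial in $\cond(\mcF)$, whenever the correlation sheaf carries no geometrically trivial summand.

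Combining the diagonal contribution ($a_1\equiv a_2\pmod p$, giving $|S_{A,B}|^2\ll AB\cdot B$) with the off-diagonal ($|S_{A,B}|^2\ll A^2Bp^{1/2}(1+B/p)$), taking the square root and optimising in $A\leq\sqrt X$ will produce $|S_{A,B}|\ll X^{3/4}p^{1/8}+X(1+p/X)^{1/2}p^{-1/8}$, which matches the claimed bound for any $\eta<1/8$. The main anticipated obstacle is the classification of ``resonant'' pairs $(a_1,a_2)$ for which the correlation sheaf acquires a geometrically trivial summand even though $\mcF$ itself is non-trivial: such pairs correspond to multiplicative symmetries of $\mcF$, and by a rigidity argument for $\ell$-adic sheaves on $\Gm$ they are confined to a bounded number of cosets of $\Fp^\times$ (bounded in terms of $\cond(\mcF)$), so that their contribution can be absorbed into the diagonal term. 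This is precisely where the hypothesis that $\mcF$ is not geometrically trivial is needed; the remaining bookkeeping, namely tracking the polynomial dependencies on $\cond(\mcF)$ and on $(1+|t|)$, is routine once this obstacle is overcome.
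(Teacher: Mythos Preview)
Your bilinear approach runs into a genuine obstruction in the balanced regime, which your ``optimising in $A\leq\sqrt X$'' step glosses over. After the dyadic splitting you must bound $S_{A,B}$ for \emph{every} box with $AB\asymp X$, not for a single favourable $A$. Take $X\asymp p$ and $A\asymp B\asymp p^{1/2}$. After Cauchy--Schwarz, the off-diagonal contribution to $|S_{A,B}|^2$ is of size $B\cdot A^2\cdot p^{1/2}$: there are $\asymp A^2$ pairs $(a_1,a_2)$, and for each the completed correlation sum over $b$ in an interval of length $B\asymp p^{1/2}$ is bounded by $p^{1/2}$ via the Weil/Deligne bound, which is the \emph{trivial} bound at that length. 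This yields $|S_{A,B}|\ll AB^{1/2}p^{1/4}\asymp p$, no saving. The direct Polya--Vinogradov bound on the long $b$-sum gives $|S_{A,B}|\ll A(Bp^{-1/2}+p^{1/2})\asymp p$ as well. Both of your ingredients become trivial exactly at $A\asymp B\asymp p^{1/2}$, and this single dyadic box already prevents any power saving; the claimed bound $X^{3/4}p^{1/8}+X(1+p/X)^{1/2}p^{-1/8}$ does not follow.

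The paper circumvents this barrier by abandoning the Dirichlet-convolution structure of $d_{it}$ and instead recognising $d_{it}(n)$ as the $n$-th Fourier coefficient of the Eisenstein series $E(z,\tfrac12+it)$. One then embeds $\tsum_{V,X}(it,K)$ into an amplified second moment over the full level-$p$ spectrum and applies the Kuznetsov formula exactly as in~\cite{FKM} for cusp forms (Lemma~\ref{lm-average-bound} and Proposition~\ref{shortaveragebound}); this produces a nontrivial mean-square bound down to $X>p^{3/4+\varepsilon}$, in particular when both ``variables'' are as small as $p^{3/8+\varepsilon}$. A further iterative argument (Corollary~\ref{corshortaverage} and the lemma after it) is then needed to pass from the averaged bound to a pointwise one, because the Kuznetsov weight vanishes at $t=0$. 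The remark following Theorem~\ref{typeIsumthm} singles out precisely this point: handling the divisor function ``in a fully automorphic manner, instead of attempting to use its combinatorial structure as a Dirichlet convolution'' is the main innovation, and it is not optional here.
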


In fact, the proof of this theorem will be intertwined with the proof
of the following estimate on sums of type $I_2$:

\begin{theorem}[Type $I_2$ sums of trace weights]
\label{typeIsumthm} 
Let $K$ be an isotypic trace weight associated to the $\ell$-adic
sheaf $\mcF$ modulo $p$. Let $M,N, X\geq 1$ be parameters with
$X/4\leq MN\leq X$. Let $U$, $V$, $W$ be smooth functions satisfying
condition~\refs{Vcond} with respective parameters $Q_U,Q_V$ and $Q_W$,
all $\geq 1$. We then have
$$
\sum_{m,n}K(mn)\Bigl( \frac{m}n\Bigr)^{it}
U\Bigl(\frac{m}M\Bigr)V\Bigl(\frac{n}N\Bigr) W\Bigl(\frac{mn}{X}\Bigr)
\ll (1+|t|)^A(Q_U+Q_V)^{B} Q_W
X\Bigl(1+\frac{p}{X}\Bigr)^{1/2}p^{-\eta}
$$
for $t\in\Rr$ and for any $\eta< 1/8$ and some constants $A,B\geq 1$
depending on $\eta$ only. The implicit constant depends only on $\eta$,
on the implicit constants in~\emph{(\ref{Vcond})}, and polynomially on
the conductor of $\sheaf{F}$.
\end{theorem}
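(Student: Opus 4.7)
The plan is to reduce this Type $I_2$ estimate to the Eisenstein bound of Theorem~\ref{eisensteinsumthm} via Mellin inversion, which separates the $m$- and $n$-variables and converts the product $(m/n)^{it}U(m/M)V(n/N)$ into an oscillatory twisted divisor function times a modified smooth cutoff in the single variable $\ell = mn$.

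First I would write, with contours on $\Re s_j = 0$,
\[
U(m/M)V(n/N) = \frac{1}{(2\pi i)^2}\iint \tilde U(s_1)\tilde V(s_2)\, M^{s_1}N^{s_2}\, m^{-s_1}n^{-s_2}\, ds_1\, ds_2,
\]
swap integration and summation, and focus on the inner sum
\[
\Sigma(s_1,s_2) = \sum_{m,n} K(mn)\, m^{it-s_1} n^{-it-s_2}\, W(mn/X).
\]
Applying the elementary identity $a^u b^v = (ab)^{(u+v)/2}(a/b)^{(u-v)/2}$ with $u=it-s_1$, $v=-it-s_2$, the inner factorization over $mn=\ell$ collapses to
\[
\sum_{mn=\ell} m^{it-s_1}n^{-it-s_2} = \ell^{-(s_1+s_2)/2}\, d_{\tau}(\ell),\qquad \tau := it - (s_1-s_2)/2,
\]
so that
\[
\Sigma(s_1,s_2) = X^{-(s_1+s_2)/2}\sum_{\ell} K(\ell)\, d_{\tau}(\ell)\, W^{*}(\ell/X),\qquad W^{*}(x) := W(x)\, x^{-(s_1+s_2)/2}.
\]
On the contours $\Re s_j=0$, the shift $\tau$ is purely imaginary ($\tau = it'$ with $t' = t - \Im(s_1-s_2)/2 \in \Rr$), the prefactor is unimodular, and a Leibniz-rule computation shows that $W^{*}$ satisfies~\eqref{Vcond} with parameter $O(Q_W + |s_1| + |s_2|)$.

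Assuming $\mcF$ is geometrically non-trivial (implicit in the statement, as otherwise the bound fails for $K\equiv 1$), Theorem~\ref{eisensteinsumthm} applies directly to the sum defining $\Sigma(s_1,s_2)$ and yields, for any $\eta<1/8$,
\[
\Sigma(s_1,s_2) \ll (1+|t'|)^A\, (Q_W + |s_1| + |s_2|)\, X\, (1+p/X)^{1/2}\, p^{-\eta},
\]
with implicit constant polynomial in $\cond(\mcF)$. It then remains to integrate over the contours. Repeated integration by parts in the Mellin transforms combined with hypothesis~\eqref{Vcond} on $U,V$ gives the standard decay $|\tilde U(s_1)|\ll_B Q_U^{B}(1+|s_1|)^{-B}$ and $|\tilde V(s_2)|\ll_B Q_V^{B}(1+|s_2|)^{-B}$ for every $B\geq 0$. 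Truncating the $s_j$-integrals at scale a fixed power of $Q_U+Q_V$ (with negligible tails) and bounding the remainder crudely produces a total integral factor $\ll (1+|t|)^A(Q_U+Q_V)^{B'}Q_W$, which gives exactly the bound claimed in Theorem~\ref{typeIsumthm}.

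The main obstacle, in my view, is essentially bookkeeping: one must reconcile two competing effects, namely the inflation of the bound on $\Sigma(s_1,s_2)$ by $(Q_W+|s_1|+|s_2|)(1+|t'|)^A$ coming from the weight modification $W\to W^{*}$ and from the shift $\tau = it-(s_1-s_2)/2$, against the Mellin decay of $\tilde U,\tilde V$, which confines $|s_j|$ to scale $Q_U,Q_V$. Once Theorem~\ref{eisensteinsumthm} is granted, no further arithmetic or geometric input is required, and the argument is a routine Mellin-analytic manipulation.
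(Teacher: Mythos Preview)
Your Mellin reduction is correct and is precisely the content of Lemma~\ref{lm-relations}\,(1) in the paper: the change of variables to $\theta_1=(s_1+s_2)/2$, $\theta_2=(s_2-s_1)/2$, the identification of the inner sum as $\tsum_{W_{\theta_1},X}(\theta_2+it,K)$, and the observation that $W^{*}$ satisfies~\eqref{Vcond} with parameter $O(Q_W+|s_1|+|s_2|)$ all appear there verbatim. Granting Theorem~\ref{eisensteinsumthm}, your argument does yield Theorem~\ref{typeIsumthm}; one small correction is that the truncation of the $s_j$-integrals should be at height $X^{\eps}$ rather than at a fixed power of $Q_U+Q_V$ (otherwise the tail estimate via the trivial bound $|\Sigma|\ll X\log X$ need not beat $Xp^{-\eta}$), exactly as the paper does.

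Where your write-up departs from the paper is in the logical structure. The paper does \emph{not} prove Theorem~\ref{eisensteinsumthm} first and then deduce Theorem~\ref{typeIsumthm}; the two are proved simultaneously by an iterative bootstrap. The amplified mean-square bound (Lemma~\ref{lm-average-bound} and Proposition~\ref{shortaveragebound}) carries a weight $\min(|t|^2,1)$ that vanishes at $t=0$, so it does not directly give a pointwise bound for $\tsum_{V,X}(it,K)$. To kill this zero, the paper passes through $\tsum_{\uple{P}}$ via Lemma~\ref{lm-relations}\,(1), then back to $\tsum_{V,X}$ via Lemma~\ref{lm-relations}\,(2), improving the exponent at each step (the ``iterative bound'' lemma). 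Thus your one-way reduction is one half of a two-way equivalence that the paper exploits repeatedly; treating Theorem~\ref{eisensteinsumthm} as a black box hides the fact that its proof already uses the very Mellin identity you wrote down.
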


\begin{rem} 
  (1) Through the techniques of~\cite{FKM}, this result depends on
  deep results of algebraic geometry, including Deligne's general
  form of the Riemann Hypothesis over finite fields, and the theory of
  the $\ell$-adic Fourier transform of Deligne, Laumon and Katz.
\par
(2)  The Polya-Vinogradov method would yield a non trivial bound for the
  sum above as long as $\max(M,N)\gg p^{1/2}\log p$. Here we obtain
  non trivial estimates for $MN\gg p^{3/4+\eps}$ in particular when
  $M,N\gg p^{3/8+\eps}$.
\par
(3) From our point of view, the main innovation in this result, which
promises to have other applications, is that we handle the divisor
function in a fully automorphic manner, instead of attempting to use
its combinatorial structure as a Dirichlet convolution.
\end{rem}

\subsection{Sums of type $I$ and $II$}

Our second main result is a general estimate for sums of type $II$,
which gives non-trivial bounds, as long as one of the variables has
range slightly greater than $p^{1/2}\log p$ and the other has
non-trivial range. Precisely:

\begin{theorem}\label{typeIIsumthm} 
  Let $K$ be a \emph{non-exceptional} trace weight modulo $p$
  associated to an isotypic $\ell$-adic sheaf $\mcF$. Let $M,N\geq 1$
  be parameters, and let $(\alpha_m)_{m}$, $(\beta_n)_n$ be sequences
  supported on $[M/2,2M]$ and $[N/2,2N]$ respectively.
\par
\emph{(1)} We have
\begin{equation}
\label{typeIIeq}
\sumsum_\stacksum{m,n}{(m,p)=1}
\alpha_m\beta_n K(mn)
\ll\|\alpha\|\|\beta\|(MN)^{1/2}
\Bigl(\frac{1}{p^{1/4}}+\frac{1}{M^{1/2}}+
\frac{p^{1/4}\log^{1/2}p}{N^{1/2}}\Bigr),
\end{equation}
where
$$
\|\alpha\|^2=\sum_m|\alpha_m|^2,\ \|\beta\|^2=\sum_n|\beta_n|^2.
$$
\par
\emph{(2)} We have
\begin{equation}
\label{typeIeq}
\sum_{(m,p)=1}\alpha_m\sum_{n\leq N} K(mn)\ll
\Bigl(\sum_{m}|\alpha_m|\Bigr)N
\Bigl(\frac{1}{p^{1/2}}+\frac{p^{1/2}\log p}{N}\Bigr).
\end{equation}
\par
In both estimates, the implicit constants depend only, and at most
polynomially, on the conductor of $\mcF$.
\end{theorem}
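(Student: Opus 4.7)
Both bounds rest on multiplicative completion and the Riemann Hypothesis over finite fields, applied to the pulled-back sheaves $[\times m]^{*}\mcF$ and their tensor products.

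For the Type $I$ bound \refs{typeIeq}, the plan is to bound each inner sum $\sum_{n\leq N}K(mn)$ separately. For any $m$ with $(m,p)=1$, the function $n\mapsto K(mn)$ is the trace function of $[\times m]^{*}\mcF$, which is still non-exceptional and has the same conductor as $\mcF$. Completing the sum modulo $p$ via Polya--Vinogradov and invoking Deligne's square-root bound on the resulting complete sums, I would obtain, uniformly in $m$,
$$\sum_{n\leq N}K(mn)\ll \frac{N}{p^{1/2}}+p^{1/2}\log p,$$
the first term being the contribution of the ``main term'' $\sum_{n\,\mathrm{mod}\,p}K(mn)\ll p^{1/2}$ (bounded non-trivially precisely because of non-exceptionality) multiplied by the number $N/p$ of complete periods, and the second term coming from Polya--Vinogradov completion. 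Multiplying by $|\alpha_{m}|$ and summing over $m$ yields \refs{typeIeq}.

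For the Type $II$ bound \refs{typeIIeq}, I would follow the standard Cauchy--Schwarz-and-correlation strategy. Applying Cauchy--Schwarz in $n$ to absorb the $\beta_{n}$ coefficients, one gets
$$\Bigl|\sumsum_{m,n}\alpha_{m}\beta_{n}K(mn)\Bigr|^{2}\leq\|\beta\|^{2}\sumsum_{m_{1},m_{2}}\alpha_{m_{1}}\overline{\alpha_{m_{2}}}\,\mathcal{C}(m_{1},m_{2}),$$
where $\mathcal{C}(m_{1},m_{2})=\sum_{n\asymp N}K(m_{1}n)\overline{K(m_{2}n)}$ is a multiplicative correlation sum of $K$. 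I would split the $(m_{1},m_{2})$-sum according to whether $m_{1}\equiv m_{2}\pmod p$. The diagonal pairs contribute at most $(1+M/p)\|\alpha\|^{2}N$ via a Cauchy--Schwarz count over residue classes modulo $p$. For the off-diagonal pairs, Polya--Vinogradov completion reduces the estimate to bounds on complete sums of the trace function of $[\times m_{1}]^{*}\mcF\otimes\overline{[\times m_{2}]^{*}\mcF}$; since $\mcF$ is non-exceptional and $m_{1}\not\equiv m_{2}\pmod p$, this sheaf has no geometrically trivial constituent, and the Riemann Hypothesis yields $\mathcal{C}(m_{1},m_{2})\ll p^{1/2}\log p$. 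Adding the resulting off-diagonal contribution $\ll Mp^{1/2}\log p\cdot\|\alpha\|^{2}$ to the diagonal one, taking square roots, and using $\sqrt{a+b+c}\leq\sqrt{a}+\sqrt{b}+\sqrt{c}$ yields the desired bound \refs{typeIIeq}.

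The key obstacle is the verification that $[\times m_{1}]^{*}\mcF$ and $[\times m_{2}]^{*}\mcF$ are not geometrically isomorphic whenever $m_{1}\not\equiv m_{2}\pmod p$, so that the tensor product has no geometrically trivial component and square-root cancellation holds in $\mathcal{C}(m_{1},m_{2})$. This is exactly the role of the non-exceptional hypothesis: any $a\in\Fpt\setminus\{1\}$ for which $[\times a]^{*}\mcF\cong\mcF$ geometrically would, together with the constraints on the ramification of isotypic sheaves, force $\mcF$ to be (geometrically) a tensor product of a Kummer and an Artin--Schreier sheaf, which is precisely the exceptional class excluded by hypothesis.
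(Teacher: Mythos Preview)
Your Type $I$ argument is fine, but the Type $II$ argument has a genuine gap.

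After Cauchy--Schwarz and Polya--Vinogradov completion, the correlation sum expands as
\[
\sum_{N/2\leq n\leq 2N}\overline{K(m_1n)}K(m_2n)\ll \frac{N}{p}|\wwd(m_1,m_2,0,K)|+\sum_{0<|h|\leq p/2}\min\Bigl(\frac1{|h|},\frac Np\Bigr)|\wwd(m_1,m_2,h,K)|,
\]
with $\wwd(m_1,m_2,h,K)=\sum_{z}\overline{K(m_1z)}K(m_2z)e(hz/p)$. Square-root cancellation for $\wwd(m_1,m_2,h,K)$ requires that no Jordan--H\"older factor of $[\times m_1]^*\dual(\mcF)\otimes[\times m_2]^*\mcF$ be geometrically isomorphic to $\sheaf{L}_{\psi(-hX)}$; equivalently (in the irreducible case) that $[\times(m_2/m_1)]^*\mcF\not\simeq\mcF\otimes\sheaf{L}_{\psi'}$ for the relevant additive character $\psi'$. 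Your criterion ``$[\times m_1]^*\mcF\not\simeq[\times m_2]^*\mcF$'' is exactly the case $h=0$ and says nothing about the nonzero frequencies, which carry the bulk of the completion.

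Moreover, your claimed dichotomy---that a single $a\neq 1$ with $[\times a]^*\mcF\simeq\mcF$ forces $\mcF$ to be exceptional---is false. For instance, if $\mcF=[x\mapsto x^d]^*\mcG$ for any irreducible Fourier sheaf $\mcG$, then $[\times\zeta]^*\mcF\simeq\mcF$ for every $d$-th root of unity $\zeta$, yet $\mcF$ need not be Kummer$\otimes$Artin--Schreier. The paper's Lemma~\ref{lm-mult-invariant} is precisely about this phenomenon: multiplicative self-isomorphisms of bounded order are allowed, and are controlled by the Swan conductor.

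The paper's route is to observe, via Plancherel, that $\wwd(m_1,m_2,h,K)$ equals the correlation sum $\wwd(\mcF;\gamma)$ for the upper-triangular matrix $\gamma=\begin{pmatrix}m_1/m_2&h/m_2\\0&1\end{pmatrix}$, so that the ``bad'' triples are parametrized by $\hautb_{\mcF}=\haut_{\mcF}\cap\Bb$, the intersection of the Fourier--M\"obius group with the Borel. Theorem~\ref{th-bound-bad} then shows that $|\hautb_{\mcF}(\Fp)|\leq 10\cond(\mcF)^2$ unless $\mcF$ is exceptional: not that there are \emph{no} bad pairs $(m,h)$, only \emph{boundedly many}. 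These finitely many bad triples are then handled with the trivial bound $|\wwd|\leq\cond(\mcF)^2p$, contributing the term $N(1+M/p)\|\alpha\|^2\|\beta\|^2$ and hence the $M^{-1/2}$ in~\eqref{typeIIeq}. Your diagonal/off-diagonal split in $m_1\equiv m_2\pmod p$ does not capture this set.
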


This theorem constitutes a significant generalization of results like
\cite[Cor.~2.11]{MichelInv} or \cite[Prop.~1.3]{FMAnn}, which were
obtained for very specific weights (symmetric powers of Kloosterman
sums and additive characters of rational functions, respectively). The
main difference is that we do not require any knowledge of the
geometric monodromy group of $\mcF$. Instead, it turns out that we can
build on the same ideas used in~\cite{FKM} to handle algebraic twists
of cusp forms. A crucial role is played again by the $\ell$-adic
Fourier transform, and by a geometric invariant of $\mcF$ which we
introduced in~\cite{FKM}, namely its \emph{Fourier-M\"obius group},
which controls the correlation of the trace function of the Fourier
transform of $\mcF$ with its pullbacks under automorphisms of the
projective line. In fact, it is the intersection of this group with
the standard Borel subgroup of upper-triangular matrices of
$\PGL_2(\Fp)$ which we must understand, the essential point being that
this intersection is of size bounded in terms of the conductor of
$\mcF$ \emph{unless} $\mcF$ is exceptional. This is the origin of this
restriction in Theorem~\ref{typeIIsumthm}. It is rather remarkable
that the upper-triangular matrices in the Fourier-M\"obius group were
precisely those which do \emph{not} cause any difficulty in~\cite{FKM}
(hence in Theorem~\ref{typeIsumthm}).

\begin{remark} 
  For the purpose of Theorem~\ref{primesumthm}, it is indeed enough to
  handle sums of type $I_2$ and to deal will all others as sums of
  type $II$.  Other problems may require direct treatment of sums of
  type $I_r$ with $r\geq 3$ (see for instance the beautiful recent
  work of N. Pitt \cite{Pitt}). One might expect that this involves
  the theory of automorphic forms on $\GL_r$.
\end{remark}

\begin{remark} In~\cite{FMAnn}, the first and third authors obtained
  bounds which could be stronger than \refs{typeIIeq} and
  \refs{typeIeq}, in particular in ranges of $M$, $N$ which are
  shorter than the Polya-Vinogradov range $p^{1/2}$ (see
  \cite[Prop. 1.2 and Thm~1.4]{FMAnn}). These bounds were established
  only for very special weights associated to rank one sheaves
  (additive characters of specific rational functions). It is quite
  conceivable that these results remain valid for more general trace
  weights, and we hope to come back to this question in a later
  work. From our current level of understanding at least, it
  seems that, instead of the Fourier-M\"obius group (or in addition to
  it), we would need to involve more precise information on the
  underlying sheaf, for example concerning fine details of its
  ramification behavior, and/or its geometric monodromy group.
\end{remark}

\subsection{Acknowledgments}

Part of this work was done during the 60th birthday conference of
Roger Heath-Brown at Oxford. We would also like to thank the
organizers, Tim Browning, David Ellwood and Jonathan Pila for this
rich and pleasant week. Roger Heath-Brown has obtained fundamental
results in the theory of the primes; it is no surprise that, once
more, the celebrated ``Heath-Brown identity'' makes a crucial
appearance in the present work. 
\par
This paper has benefited from discussions and comments from Jean
Bourgain, Satadal Ganguly, Paul Nelson, Richard Pink, Ng\^o Bao
Ch\^au, Peter Sarnak, Igor Shparlinski, Akshay Venkatesh, and
 Zhiwei Yun and it is a pleasure to thank them for their input.
 
 Finally, we are thankful to the referees for their careful reading
  of the manuscript.

\section{Algebraic twists of Eisenstein series and sums of type $I_2$}

In this section, we will prove Theorem~\ref{eisensteinsumthm} and
Theorem~\ref{typeIsumthm} simultaneously. Indeed, the two results are
very closely related, as we will first clarify.  
\par
Let $M,N, X\geq 1$ with $X/4\leq MN\leq 4X$. Let $t\in\Rr$ be given,
as well as three smooth functions $U$, $V$, $W$ satisfying
\refs{Vcond} with respective parameters $Q_U$, $Q_V$, $Q_W$, all $\geq
1$. We package these parameters by denoting
$$
\uple{P}=(U,V,W,M,N,X),
$$ 
and we denote
$$
\tsum_{\uple{P}}(it,K) =\sum_{m,n}K(mn)\Bigl(\frac{m}n\Bigr)^{it}
U\Bigl(\frac{m}M\Bigr)V\Bigl(\frac{n}N\Bigr)
W\Bigl(\frac{mn}{X}\Bigr),
$$
which is the sum involved in Theorem~\ref{typeIsumthm}. For later use,
we state formally here the trivial bound
\begin{equation}\label{eq-trivial-bound}
\tsum_{V,X}(it,K)\ll X(\log X)
\end{equation}
for the sums $\tsum_{V,X}(it,K)$.
\par
We start with a lemma relating the sums of type $\tsum_{V,X}(\cdot,K)$
and $\tsum_{\uple{P}}(\cdot,K)$.

\begin{lemma}\label{lm-relations} We adopt the above notations and for $s\in {\mathbb C}$ and $x >0$, let 
$$
W_{s}(x):=W(x)x^{-s}.
$$
  \emph{(1)} For every $\eps >0$, there exists $C=C(\eps)$, such that we have
$$
\tsum_{\uple{P}}(it,K)\ll_\eps  (Q_U+Q_V)^C+ \iint_{|t_1|,|t_2|\leq
  X^{\eps}}{ |\tsum_{W_{t_1},X}(it_2+it,K)|dt_1dt_2 }.
$$
 \emph{(2)} For every $\eps >0$, one has 
$$
\tsum_{V,X}(it,K)\ll_\eps  X^{\eps}\max_{\uple{P}=(U_1,V_1,V,M,N,X)}
|\tsum_{\uple{P}}(it,K)|,
$$
where $\uple{P}$ runs over parameters $(U_1,V_1,V,M,N,X)$ as above
with $Q_{U_1}=Q_{V_1}=1$.
\end{lemma}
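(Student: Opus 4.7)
The plan for part (1) is to apply Mellin inversion to separate $U(m/M)$ and $V(n/N)$, then re-assemble the inner double sum as a single sum over $k=mn$ twisted by a divisor function. Writing
$$
U(m/M) = \frac{1}{2\pi}\int_\Rr \tilde U(i\tau_1)(m/M)^{-i\tau_1}\,d\tau_1
$$
and similarly for $V$, the key algebraic identity
$$
(m/n)^{it}\,m^{-i\tau_1}\,n^{-i\tau_2} = (mn)^{-i(\tau_1+\tau_2)/2}\,(m/n)^{i(t + (\tau_2-\tau_1)/2)}
$$
collapses the inner double sum into $X^{-i\sigma}\,\tsum_{W_{i\sigma}, X}(i(t+\rho), K)$ with $\sigma := (\tau_1+\tau_2)/2$ and $\rho := (\tau_2-\tau_1)/2$. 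Changing variables in the integral to $(\sigma, \rho)$ (Jacobian $2$), and using the uniform bound $|\tilde U(i\tau)|,\ |\tilde V(i\tau)| \ll 1$ on the imaginary axis (since $U, V$ are bounded and supported in $[1/2, 2]$), the contribution from $|\sigma|,\ |\rho| \leq X^\eps$ yields the integral term of the lemma.

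For the complement, repeated integration by parts against~\refs{Vcond} gives $|\tilde U(i\tau)| \ll_A (Q_U/(1+|\tau|))^A$ and similarly for $\tilde V$, for any $A \geq 0$. I then proceed by case analysis on the size of $Q_U + Q_V$. If $Q_U + Q_V \geq X^{\eps/2}$, the trivial bound $|\tsum_{\uple P}(it,K)| \ll MN \ll X \ll (Q_U + Q_V)^{2/\eps}$ already yields the claim with $C = 2/\eps$. Otherwise $Q_U + Q_V < X^{\eps/2}$, and on the complement region $\max(|\tau_1|, |\tau_2|) \gg X^\eps \gg Q_U + Q_V$, so the Mellin decay bounds apply with full force; combined with the trivial estimate~\refs{eq-trivial-bound} for $\tsum_{W_s, X}$, choosing $A$ sufficiently large in terms of $\eps$ reduces the complement contribution to $\ll X^{-100}$, trivially absorbed into $(Q_U + Q_V)^C$.

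Part (2) follows from a smooth dyadic decomposition. Fix $\phi$ smooth, supported in $[1/2, 2]$, with $\sum_{j \in \Zz}\phi(x/2^j) = 1$ for $x > 0$, and normalized so that $Q_\phi = 1$. Expanding $d_{it}(n) = \sum_{ab = n}(a/b)^{it}$ and inserting the partition of unity in both variables yields
$$
\tsum_{V, X}(it, K) = \sum_{a, b}K(ab)(a/b)^{it}V(ab/X) = \sum_{j, k}\tsum_{\uple P_{j,k}}(it, K),
$$
with $\uple P_{j, k} = (\phi, \phi, V, 2^j, 2^k, X)$. The support condition on $V$ forces $2^{j+k} \asymp X$, so only $O(\log X) \ll X^\eps$ pairs $(j, k)$ contribute, and taking absolute values and bounding by the maximum gives the claim.

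The main obstacle will be the case analysis in part (1): one must balance the Mellin decay of $\tilde U, \tilde V$ against the trivial $X\log X$ bound on $\tsum_{W_s, X}$ in the boundary range where $Q_U + Q_V$ is comparable to the truncation level $X^\eps$, where neither the pure Mellin-decay argument nor the trivial bound on $\tsum_{\uple P}$ alone would suffice without splitting.
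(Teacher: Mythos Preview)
Your proof is correct and follows essentially the same approach as the paper: Mellin inversion in $U$ and $V$ followed by the change of variables $(\tau_1,\tau_2)\mapsto(\sigma,\rho)=\bigl(\tfrac{\tau_1+\tau_2}{2},\tfrac{\tau_2-\tau_1}{2}\bigr)$ for part~(1), and a smooth dyadic partition of unity for part~(2). Your explicit case split on whether $Q_U+Q_V\gtrless X^{\eps/2}$ is a helpful elaboration of the tail estimate that the paper handles in one sentence, but it is not a different method.
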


\begin{proof}
  (1) Denote by $\hat U$ and $\hat V$ the Mellin transforms of the
  smooth functions $U$ and $V$. These are entire functions, which
  satisfy
\begin{equation}
\label{UVdecay}
\hat U(s),\hat V(s)\ll \Bigl(\frac{Q_U+Q_V}{1+|s|}\Bigr)^k,
\end{equation}
for any $k\geq 0$, where the implicit constants depend on $k$, $\Re s$
and the implicit constants in~(\ref{Vcond}).
\par
We then have
\begin{align*}
  \tsum_{\uple{P}}(it,K) &= \frac{1}{(2i\pi)^2}
  \int_{(0)}\int_{(0)}\hat{U}(u)\hat{V}(v)\mathcal{T}_W(u,v)N^uM^vdudv
\end{align*}
by Mellin inversion, where
$$
\mathcal{T}_W(u,v)=
\sum_{m,n\geq 1}{K(mn)m^{it-u}n^{-it-v}W\Bigl(\frac{mn}{X}\Bigr)}.
$$
\par
This sum can be expressed as a twist of Eisenstein
series~(\ref{eq-eisenstein-twist}), namely
$$
\mathcal{T}_W(u,v)=X^{-\theta_1}\tsum_{W_{\theta_1},X}(\theta_2+it,K),
$$
where 
\begin{gather*}
  \theta_1=\frac{u+v}{2},\quad\quad \theta_2=\frac{-u+v}{2}.
\end{gather*}
\par
Thus, by a change of variable, we get
$$
\tsum_{\uple{P}}(it,K)=
\frac{2}{(2i\pi)^2}
\int_{(0)}\int_{(0)}
\hat{U}(\theta_1-\theta_2)
\hat{V}(\theta_1+\theta_2)
 \Bigl(\frac{M}{N}\Bigr)^{\theta_2}
\Bigl(\frac{MN}{X}\Bigr)^{\theta_1}
\tsum_{W_{\theta_1},X}(\theta_2+it,K)d\theta_2d\theta_1.
$$
\par
The function $W_{\theta_1}$ is smooth and compactly supported on
$[1/2,2]$. For $\Re \theta_1=0$, it satisfies~(\ref{Vcond}) with parameter
\begin{equation}\label{eq-qw1}
Q(\theta_1)\ll Q_W+|\theta_1|,
\end{equation}
where the implicit constant is absolute. 
\par
Using~(\ref{UVdecay}) for $k$ large enough, and the trivial
bound~(\ref{eq-trivial-bound}), the contribution to this double
integral of the region where $|\theta_1|\geq X^{\eps}$ or
$|\theta_2|\geq X^{\eps}$ is
$$
\ll (Q_U+Q_V)^C
$$
for some $C\geq 0$ depending only on $\eps$, which concludes the
proof.
\par
(2) By a dyadic partition of unity (using Lemma \ref{dyadic} below),
and taking into account the support condition, we can decompose
$\tsum_{V,X}(it,K)$ into $O(\log X)$ sums of the shape
$$
\tsum_{\uple{P}}(it,K)
$$
where
$$
\uple{P}=(U_1,V_1,V,M,N,X)
$$
with $X/4\leq MN\leq 4X$, and furthermore the functions $U_1$, $V_1$
satisfy condition \refs{Vcond} with parameters $Q_{U_1}=Q_{V_1}=1$.
The result is then immediate.
\end{proof}

\subsection{A simple bound}

We start with the following simple ``convexity'' bound for the
Eisenstein twists $\tsum_{V,X}(it,K)$, which is useful for $X\geq p$,
and will indeed imply both Theorem~\ref{typeIsumthm} and
Theorem~\ref{eisensteinsumthm} for $X\geq p^{5/4+\eps}$.

\begin{lemma}\label{convexlem} 
  With the notation and assumptions of Theorem \ref{eisensteinsumthm},
  we have for any $\eps>0$,
\begin{equation}
\label{convexeq}
\tsum_{V,X}(it,K)\ll 
\bigl(pQX(1+|t|)\bigr)^\eps 
(1+|t|)^{1/2}QX\Bigl(\frac{1}{p}+\frac{p}{X}\Bigr)^{1/2},
\end{equation}
where the implicit constant depends on $\eps$ and polynomially on
$\cond(\mcF)$. 
\end{lemma}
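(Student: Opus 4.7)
The plan is to expand the twisted divisor function as $d_{it}(n) = \sum_{ab=n}(a/b)^{it}$, fix the smaller of the two variables, and apply completion plus Poisson summation modulo $p$ to the remaining (longer) inner sum; the factor $(1+|t|)^{1/2}$ in the final bound will emerge from a stationary phase analysis, mirroring the classical convexity bound $|\zeta(\tfrac12+it)|^2 \ll (1+|t|)^{1/2+\eps}$.

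We first write
\[
\tsum_{V,X}(it,K) = \sum_{a,b\geq 1} K(ab)(a/b)^{it} V(ab/X).
\]
The swap $(a,b)\leftrightarrow(b,a)$ sends $t\to-t$ while preserving the target bound, so it suffices to estimate the contribution of $a<b$, equivalently $a\leq\sqrt X$; the diagonal $a=b$ contributes only $O(\sqrt X)$, and the $O(\sqrt X/p)$ values of $a$ divisible by $p$ are handled trivially. For fixed $a$ with $(a,p)=1$, put $\Phi_a(b)=b^{-it}V(ab/X)$ and $K_a(b)=K(ab)$; since $K_a$ is $p$-periodic, completion modulo $p$ combined with Poisson summation gives
\[
\sum_b K_a(b)\Phi_a(b) = \frac1p\sum_{h\in\Zz}\hat K_a(h)\,\hat\Phi_a(h/p),
\]
where $\hat K_a(h)=\sum_{c\bmod p}K(ac)e(-ch/p)=\hat K(h\bar a)$. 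Since $\mcF$ is not geometrically trivial and is a Fourier sheaf, Deligne's theorem applied both to $\mcF$ (for $h=0$) and to its Fourier transform sheaf $\mathrm{FT}(\mcF)$ (for $h\neq 0$) yields $|\hat K(h)|\ll p^{1/2}$ uniformly in $h$, with implicit constant polynomial in $\cond(\mcF)$.

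A change of variables gives $\hat\Phi_a(\xi)=(X/a)^{1-it}\tilde V_t(\xi X/a)$ with $\tilde V_t(\eta)=\int u^{-it}V(u)e(-\eta u)\,du$. The heart of the argument is the estimate
\[
\sum_{h\in\Zz}|\tilde V_t(hX/(ap))| \;\ll\; Q(1+|t|)^{1/2}\bigl(1+ap/X\bigr),
\]
which I would obtain by analyzing the phase $-t\log u - 2\pi\eta u$, whose stationary point is $u_0=-t/(2\pi\eta)$: outside the window $\eta\in[-t/\pi,-t/(4\pi)]$ in which $u_0$ lies in the support of $V$ (with an additional $O(Q)$ buffer accounting for the smooth cutoff), repeated integration by parts based on the Sobolev bounds~\eqref{Vcond} produces rapid decay, while within this window of width $\asymp |t|$ the stationary phase formula yields $|\tilde V_t(\eta)|\ll (1+|t|)^{-1/2}$. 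Counting discrete frequencies $\eta=hX/(ap)$ (spaced by $X/(ap)$) falling in this window then gives the claimed bound.

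Combining these estimates,
\[
|S_a(t)| \;\ll\; \frac{p^{1/2}}{p}\cdot\frac{X}{a}\cdot Q(1+|t|)^{1/2}\bigl(1+ap/X\bigr) \;=\; Q(1+|t|)^{1/2}\Bigl(\frac{X}{ap^{1/2}}+p^{1/2}\Bigr),
\]
and summing over $a\leq\sqrt X$ produces
\[
|\tsum_{V,X}(it,K)| \;\ll\; (\log X)\,Q(1+|t|)^{1/2}\Bigl(\frac{X}{p^{1/2}}+\sqrt{Xp}\Bigr) \;\ll\; X^{\eps}\,Q(1+|t|)^{1/2}\,X\Bigl(\frac{1}{p}+\frac{p}{X}\Bigr)^{1/2},
\]
absorbing the logarithm into the $\eps$-factor. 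The main obstacle is the refined stationary phase estimate in the third paragraph: naive integration by parts alone gives only a factor of $(1+|t|)$ rather than $(1+|t|)^{1/2}$, and it is precisely this square-root saving (together with Deligne's uniform bound on $|\hat K|$) that gives the bound its ``convexity'' quality.
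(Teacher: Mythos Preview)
Your approach is correct and genuinely different from the paper's. The paper does not open up $d_{it}$ as a convolution at all. Instead it expands $K(n)$ in multiplicative characters modulo $p$ via the discrete Mellin transform $\tilde K(\chi)$, so that the inner sum over $n$ becomes
\[
\sum_n \chi(n)\,d_{it}(n)\,V(n/X)=\frac{1}{2\pi i}\int_{(1/2)}\hat V(s)\,X^{s}\,L(\chi,s+it)L(\chi,s-it)\,ds,
\]
and then applies Cauchy--Schwarz in $\chi$ and $s$. The $K$-side is handled by Parseval, $\sum_\chi|\tilde K(\chi)|^2\ll p$, and the $L$-function side by the hybrid large sieve for the fourth moment of Dirichlet $L$-functions; the factor $(1+|t|)^{1/2}$ comes out automatically from the analytic conductor in the approximate functional equation. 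The only place where Deligne enters is the single bound $\hat K(0)\ll p^{1/2}$ needed for the trivial character.

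By contrast, you expand $d_{it}$ by the hyperbola method and apply Poisson summation modulo $p$ in the long variable; this trades the large sieve for two different ingredients: the pointwise Deligne bound $|\hat K(h)|\ll p^{1/2}$ at \emph{every} frequency (i.e.\ square-root cancellation for the Fourier transform of $\mcF$, not just at $0$), and a stationary-phase analysis of $\tilde V_t$ to recover the $(1+|t|)^{1/2}$. Your route is more self-contained (no large sieve) and exploits more of the $\ell$-adic structure of $K$; the paper's route is more modular and uses less about $K$ beyond boundedness, since Parseval suffices for the non-trivial characters. Both lead to the same bound. The stationary-phase step you flag as the obstacle is indeed the delicate point: one clean way to get it is the van der Corput second-derivative bound $|\tilde V_t(\eta)|\ll Q(1+|t|)^{-1/2}$ for $|\eta|\ll Q+|t|$, combined with repeated integration by parts against the amplitude $V(u)u^{-it}$ (whose $j$-th derivative is $\ll (Q+|t|)^j$) for the tail $|\eta|\gg Q+|t|$.
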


\begin{proof}
  This is relatively standard, so we will be fairly brief: the idea is
  to use periodicity of $K$ and to represent it in terms of Dirichlet
  characters, reducing then to easy estimates for moments of Dirichlet
  $L$-functions.  
\par
First of all,  the contribution to $\tsum_{V,X}(it,K)$ of
the integers $n$ divisible by $p$ is
$$
\sum_{n\equiv 0\mods{p}}K(0)d_{it}(n)V\Bigl(\frac{n}X\Bigr)
\ll_{\cond(\mcF)} p^{-1}X\log X.
$$ 
\par
Next, for $(n,p)=1$, we can write
$$
K(n)=\frac{1}{(p-1)^{1/2}}\sum_{\chi}\tilde K(\chi)\chi(n)
$$
where $\chi$ runs over the Dirichlet characters modulo $p$ and
$$
\tilde K(\chi)=\frac{1}{(p-1)^{1/2}}\sum_{m\in\Fpt}K(m)\ov\chi(m)
$$
is the finite-field Mellin transform of $K$. Thus we get
$$
\sum_{(n,p)=1}K(n)d_{it}(n)V\Bigl(\frac{n}X\Bigr)
=\frac{1}{(p-1)^{1/2}}\sum_{\chi}\tilde
K(\chi)\sum_{n}\chi(n)d_{it}(n)V\Bigl(\frac{n}{X}\Bigr).
$$
\par
The contribution of the trivial character $\chi_0$ to this sum
is estimated by
$$
\frac{1}{(p-1)^{1/2}}\tilde K(\chi_0)
\sum_{n}\chi_0(n)d_{it}(n)V\Bigl(\frac{n}{X}\Bigr)\ll_{\cond(\mcF)}
p^{-1/2}X\log X
$$
(indeed, since $K$ is a trace weight, we have
$$
(p-1)^{1/2}\tilde K(\chi_0)=\sum_{m\in\Fp}K(m)-K(0)=p^{1/2}\hat
K(0)-K(0)\ll_{\cond(\mcF)} p^{1/2},
$$ 
where $\hat K$ denotes the unitarily normalized Fourier transform of
$K$, so that $-\hat{K}$ is also an isotypic trace weight, associated
to a sheaf with conductor bounded in terms of that of $\sheaf{F}$
only, by the properties of the Fourier transform of $\ell$-adic
sheaves, as explained in~\cite[\S 1.4, Prop. 8.2]{FKM}.)
\par
For $\chi$ non-trivial, denoting by $\hat{V}(s)$ the Mellin transform
of $V$, we have
$$
\sum_{\chi\not=\chi_0}\tilde
K(\chi)\sum_{n}\chi(n)d_{it}(n)V\Bigl(\frac{n}{X}\Bigr)=
\intc_{\Re
  s=1/2}\sum_{\chi\not=\chi_0}\tilde K(\chi)
L(\chi,s+it)L(\chi,s-it)\hat{V}(s)X^sds,
$$ 
by a standard application of Mellin inversion and a contour shift.
\par
From~(\ref{Vcond}), we get 
\begin{equation}\label{eq-117}
\hat{V}(s)\ll_j \Bigl(\frac Q{1+|s|}\Bigr)^j
\end{equation}
for all $j\geq 0$, where the implicit constant depends on $j$. Now,
for any fixed $\eps>0$, let $S=Q$, and split the $s$-integral into
$$
\intc_{\Re s=1/2}\cdots=\intc_\stacksum{\Re s=1/2}{|\Im s|\leq
  S}\cdots+\intc_\stacksum{\Re s=1/2}{|\Im s|> S}\cdots = I_1+I_2,
$$
say. To handle $I_1$, we apply Cauchy's inequality to obtain
$$
I_1^2\ll \Bigl\{XS\sum_{\chi}{|\tilde{K}(\chi)|^2}\Bigr\}
\times \int_\stacksum{\Re s=1/2}{|\Im(s)|\leq S}{
\sum_{\chi\not=\chi_0}
|
L(\chi,s-it)L(\chi,s+it)
|^2|ds|
}.
$$
\par
By the Parseval identity, the first factor on the right-hand side is
$\ll pXS$. For the second factor, we apply the approximate functional
equation to bound the product $L(\chi,s-it)L(\chi,s+it)$ by sums of
the shape 
$$
\sum_{n}\frac{\chi(n)d_{it}(n)}{n^s}W\Bigl(\frac{n}{N}\Bigr)
$$ 
for $W$ rapidly decreasing and $N\ll p(1+S+|t|)$ (see,
e.g.,~\cite[Th. 5.3]{KI}).  By a hybrid-large sieve estimate
(see~\cite[Th. 6.4]{montgomery}, compare with~\cite[Th. 7.34]{KI}), we
can get the Lindel\"of conjecture on average for the integral and sum,
and therefore derive
$$
 \int_\stacksum{\Re s=1/2}{|\Im(s)|\leq S}{
\sum_{\chi\not=\chi_0}
|
L(\chi,s-it)L(\chi,s+it)
|^2|ds|
}\ll_\eps (pS(1+|t|))^{1+\eps}.
$$
\par
To estimate $I_2$, we split the interval of integration $|\Im(s)|\geq
S$ into segments $2^\nu S\leq |\Im(s)| \leq 2^{\nu+1}S$ for $\nu\geq
0$. Applying to each segment the last inequality just above with $S$
replaced by $2^{\nu} S$ together with the rapid decay of $\hat{V}(s)$
(see~(\ref{eq-117})), we obtain the same type of bound for $I_2$ after
summation over $\nu$. 
\end{proof}

For $X\geq p^{3/2}$, for instance, the bound in this lemma is stronger
than the one claimed in Theorem~\ref{eisensteinsumthm}. Using
Lemma~\ref{lm-relations} (1), we then also deduce
Theorem~\ref{typeIsumthm} in this range. We will therefore assume for
the remainder of this section that $X\leq p^{3/2}$. Similarly,
comparing the bounds of Theorems~\ref{eisensteinsumthm} and
\ref{typeIsumthm} with the trivial bounds 
$$
\tsum_{V,X}(it,K)\ll X\log X,\quad 
\quad
\tsum_{\mathbf P}(it,K)\ll X\log X,
$$ 
we may assume that $X\geq p^{3/4}$.

\subsection{Spectral theory and amplification} 

The most important ingredient in the proof of
Theorems~\ref{eisensteinsumthm} and \ref{typeIsumthm}, is the
following lemma, which is proved with the methods of~\cite{FKM}, based
on the amplification method and Kuznetsov's formula. It is an averaged
version of a bound for the amplified second moment of the sums
$\tsum_{V,X}(it,K)$. Recall that $K$ is an isotypic trace weight.
\par
For $\tau\in\Rr$, $L\geq 1$ and $u\in\Cc$, let
$$
B_{i\tau}(u)=\sum_\stacksum{\ell\leq 2L}{\ell\text{ prime}}
\mathrm{sign}(d_{i\tau}(\ell))d_{u}(\ell),
$$
which is the amplifier (of length $2L$) adapted to the Eisenstein
series $E(z,1/2+i\tau)$.

\begin{lemma}\label{lm-average-bound}
  For any $\eps>0$ there exists $b=b(\eps)\geq 0$ such that
\begin{equation}
\label{eq-average-bound}
\int_\Rr\min(|t|^{2},|t|^{-2-2b})|B_{i\tau}(it)\tsum_{V,X}(it,K)|^2dt\ll_\eps
p^\eps\bigl(pLQX+p^{1/2}L^3QX(X/p+Q)^2\bigr),
\end{equation}
provided
$$
p^{\eps}LQ<p^{1/4},\quad\quad 1\leq L\leq X.
$$
\end{lemma}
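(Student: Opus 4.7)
The plan is to adapt the amplified second-moment approach of \cite{FKM} --- developed there for cusp forms --- to the continuous Eisenstein spectrum, the essential arithmetic input being once more the correlation-sum estimates for trace functions from \cite{FKM}.

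I would first realise $d_{it}(n)$ as the (unitarily normalised) Hecke eigenvalues of the Eisenstein series $E(\cdot,\tfrac12+it)$, so that $\tsum_{V,X}(it,K)$ is the Rankin--Selberg-type pairing of $KV(\cdot/X)$ against those eigenvalues; the amplifier $B_{i\tau}(it)$ is the one tuned to the Eisenstein series at spectral parameter $\tau$. The weight $\min(|t|^2,|t|^{-2-2b})$ vanishes to order $2$ at $t=0$ so as to cancel the $|\zeta(1+2it)|^{-2}$ factor in the Eisenstein Plancherel measure, and decays at infinity fast enough to render the Kuznetsov (Bessel) transform absolutely convergent; the parameter $b=b(\eps)$ simply quantifies how much decay is required. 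Since the integrand in \refs{eq-average-bound} is non-negative, I can upper-bound it by a full Kuznetsov-type spectral sum
$$
\sum_{f\ \text{cuspidal}}h_f\,\bigl|B_{f}\,\langle KV(\cdot/X),f\rangle\bigr|^2\ +\ \int_\Rr h(t)\bigl|B_{i\tau}(it)\tsum_{V,X}(it,K)\bigr|^2\,dt,
$$
the cuspidal (Maass and holomorphic) piece being absorbed by positivity, where $h_f, h(t)\ge 0$ form a non-negative majorant of $\min(|t|^2,|t|^{-2-2b})$ on the full spectrum.

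I would then expand $|B_{i\tau}(\cdot)|^2$ into a double sum over primes $\ell_1,\ell_2\le 2L$ and use the Hecke relation
$$
d_u(\ell)\,d_u(n)=d_u(\ell n)+\charfun_{\ell\mid n}\,d_u(n/\ell)
$$
(and its cuspidal analogue) to linearise the whole expression into a bilinear form in $K(m)\overline{K(n)}V(m/X)\overline{V(n/X)}$. The Kuznetsov trace formula converts the resulting spectral side into a diagonal plus the Kloosterman sum
$$
\sum_{\ell_1,\ell_2\le 2L}\sumsum_{m,n}K(m)\overline{K(n)}V\bigl(\tfrac{m}{X}\bigr)\overline{V\bigl(\tfrac{n}{X}\bigr)}\sum_{c\ge 1}\frac{S(\ell_1 m,\ell_2 n;c)}{c}\,\varphi\Bigl(\frac{4\pi\sqrt{\ell_1\ell_2 mn}}{c}\Bigr),
$$
where $\varphi$ is the Bessel transform of $h$. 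The diagonal contribution (from $\ell_1 m=\ell_2 n$) is estimated by the $L^2$-norm of $KV(\cdot/X)$ together with the standard prime-pair count, giving the first term $p^{\eps}\cdot pLQX$ --- the factor $p$ coming from the normalisation of the Eisenstein spectral measure, the factor $L$ from the amplifier length, and the factor $QX$ from the analytic size of the test data.

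Next I would reduce the Kloosterman sums $S(\ell_1 m,\ell_2 n;c)$ modulo $p$ and slice $c$ into residue classes. Each inner sum over $m,n$ then becomes, up to manageable arithmetic factors, a correlation sum of $K$ against additive/multiplicative translates of itself, of precisely the type controlled by \cite[\S 1.4]{FKM}. Those estimates deliver square-root cancellation in $p$, with polynomial dependence on $\cond(\mcF)$, outside a sparse exceptional locus whose size is kept small by the hypothesis $p^{\eps}LQ<p^{1/4}$. Balancing the contribution of short $c$ (bounded trivially via $|S(a,b;c)|\le c$) against that of large $c$ (where the correlation estimate kicks in), and using $L\le X$ to keep $\ell m$ within the natural range of $V(\cdot/X)$, produces the second term $p^{\eps}\,p^{1/2}L^3QX(X/p+Q)^2$.

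The main obstacle is the uniform control of the correlation sums in the parameters $\ell_1,\ell_2,c$: one needs to ensure that neither $K$ nor its Fourier transform aligns with the relevant additive characters or M\"obius twists except on a polynomially small locus. This is precisely where the Fourier--M\"obius invariant of $\mcF$ from \cite{FKM} intervenes, and where the non-triviality of $\mcF$ is essential. Everything else --- choice of the majorant $h$, absorption of the cuspidal spectrum by positivity, and truncation of the $c$-variable --- follows the cuspidal template of \cite{FKM} with only the routine adjustments needed to accommodate the Eisenstein Plancherel measure.
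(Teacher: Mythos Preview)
Your proposal is correct and takes essentially the same approach as the paper, which simply invokes \cite[Prop.~4.1, (4.10)]{FKM} after embedding the Eisenstein series $\frac{1}{(p+1)^{1/2}}E(z,\tfrac12+i\tau)$ in the continuous spectrum at level $p$, and then uses Stirling's formula together with the bound $\zeta(1+2it)\ll |t|^{-1}+\log(1+|t|)$ to convert the natural spectral weight $\tilde\phi_{a,b}(t)\,\cosh(\pi t)^{-1}|\theta(\tfrac12+it)|^{-2}$ into $\min(|t|^2,|t|^{-2-2b})$. One clarification: in the level-$p$ Kuznetsov formula all Kloosterman moduli $c$ are already divisible by $p$, so the reduction to correlation sums of $K$ modulo $p$ is immediate rather than requiring a slicing of $c$ into residue classes; and the $|t|^2$ vanishing of the final weight does not cancel but rather \emph{arises from} the pole of $\zeta$ at $1$ in the Plancherel density.
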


\begin{proof}
  As in the cuspidal case in~\cite{FKM}, we use the amplification
  method and the Kuznetsov formula, exploiting the fact that, for any
  given $\tau\in\Rr$, the Eisenstein series
$$
\frac{1}{(p+1)^{1/2}}E(z,1/2+i\tau)
$$ 
occurs in the continuous spectrum of Hecke eigenforms of level
$p$. More precisely, we have the Fourier expansion
$$
E(z,1/2+it)=y^{1/2+it}+\frac{\theta(1/2-it)}{\theta(1/2+it)}y^{1/2-it}+
\frac{1}{\theta(1/2+it)}\sum_{n\not=0}d_{it}(|n|)|n|^{-1/2}W_{it}(4\pi
|n|y)e(n x),
$$
where
$$
\theta(s)=\pi^{-s}\Gamma(s)\zeta(2s),
$$
and 
\begin{equation}
\label{Wit}
W_{it}(y)=
\frac{e^{-y/2}}{\Gamma(it+\frac12)}
\int_0^\infty e^{-x}x^{it-1/2}\Bigl(1+\frac{x}y\Bigr)^{it-1/2}dx
\end{equation}
denotes the Whittaker function (see for instance
\cite[(3.29)]{IwaIntro}).
\par
We assume that the condition of the lemma are met. Using the notation
of~\cite[Section 4]{FKM} and taking there $P=Xp^{-1}$, we obtain as
in~\cite[Prop. 4.1, (4.10)]{FKM} (with the parameter $M$ given by
applying~\cite[Th. 1.14]{FKM} to the trace weight $K$, so that
$M=aN^s$ for some absolute constants $a>0$ and $s\geq 1$) the bound
\begin{multline}
\label{meansquarebound}
\frac{1}{p+1} \int_\Rr\tilde\phi_{a,b}(t)\frac{1}{\cosh(\pi
  t)|\theta(1/2+it)|^2} |B_{i\tau}(it)|^2|\tsum_{V,X}(it,K)|^2dt
\\
\leq M(L)-2\sum_{k>a-b }\dot\phi(k)(k-1)M(L;k)\ll p^\eps\Bigl(
LQX+\frac{L^3QX}{p^{1/2}}\Bigl(\frac Xp+Q\Bigr)^2\Bigr)
\end{multline}
for any $\eps>0$, where $2\leq b<a$ are odd integers depending on
$\eps$ and $\tilde\phi(t)=\tilde{\phi}_{a, b}(t)$ denotes a positive
function such that
$$
\tilde\phi(t)\asymp_{a, b}\, (1+|t|)^{ - 2b-2}.
$$
\par
We then obtain the desired average estimate from this, using
Stirling's formula and the bound 
$$
\zeta(1+2it)\ll
\frac{1}{|t|}+\log(1+|t|).
$$ 
\end{proof}

In order to apply this, we need some lower bound for the amplifier
$B_{i\tau}(it)$.  The following lemma gets a suitable bound for $t$
close enough to $\tau$.

\begin{lemma}\label{Flemma}	
For $L$ large enough, we have
$$
B_{i\tau}(i t)\gg\frac{L}{\log^6 L},
$$
uniformly for $t$ and $\tau\in\Rr$ satisfying 
$$
|t-\tau|\ll \frac{1}{\log^7 L},\quad\text{ and } \quad |\tau|\leq
L^\frac{1}{3}.
$$
\end{lemma}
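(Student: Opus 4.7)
The amplifier simplifies on primes via $d_u(\ell) = \ell^u + \ell^{-u}$, giving $d_{i\tau}(\ell) = 2\cos(\tau\log\ell)$ and $d_{it}(\ell) = 2\cos(t\log\ell)$. Hence
$$
B_{i\tau}(it) = 2\sum_{\substack{\ell \leq 2L \\ \ell\ \text{prime}}} \mathrm{sign}(\cos(\tau\log\ell)) \cdot \cos(t\log\ell).
$$
Since $|\cos(t\log\ell) - \cos(\tau\log\ell)| \leq |t-\tau|\log(2L) \ll 1/\log^6 L$, summing this pointwise error over the $\pi(2L) \ll L/\log L$ primes yields only $O(L/\log^7 L)$. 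It therefore suffices to prove
$$
S(\tau) := \sum_{\substack{\ell \leq 2L \\ \ell\ \text{prime}}} |\cos(\tau\log\ell)| \gg L/\log^6 L
$$
uniformly for $|\tau| \leq L^{1/3}$.

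I would split by the size of $|\tau|$. When $|\tau| \leq 1/\log^4 L$, one has $|\tau\log\ell| = O(1/\log^3 L)$, so $\cos(\tau\log\ell) \geq 1/2$ and $S(\tau) \geq \pi(2L)/2 \gg L/\log L$. Now suppose $|\tau| > 1/\log^4 L$. I would call a prime \emph{bad} (at level $\epsilon > 0$) if $|\cos(\tau\log\ell)| < \epsilon$. Such a prime must lie in a multiplicative neighborhood of one of the points $a_k := \exp((2k+1)\pi/(2|\tau|))$ of relative width $O(\epsilon/|\tau|)$, and only $K+1 \leq |\tau|\log(2L)/\pi + 1$ of them lie below $2L$. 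For intervals of linear length $\geq 2$ (that is, for $a_k \geq |\tau|/\epsilon$) I would apply Brun--Titchmarsh; for the shorter ones a trivial bound suffices. Combined with the geometric-series estimate
$$
\sum_{a_k \leq 2L} a_k \ll \frac{L}{1 - e^{-\pi/|\tau|}},
$$
coming from the common ratio $e^{\pi/|\tau|}$ of the $a_k$, this yields
$$
\#\{\text{bad primes}\} \ll \frac{\epsilon L}{\min(|\tau|,\pi)\log L} + |\tau|\log L.
$$
Choosing $\epsilon = \min(|\tau|,\pi)/(C\log L)$ for a sufficiently large absolute constant $C$ makes the right-hand side at most $\pi(2L)/2$; the hypothesis $|\tau| \leq L^{1/3}$ is precisely what keeps the $|\tau|\log L$ contribution from the short intervals far below $\pi(2L)/2$. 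Consequently at least half of the primes are good, each contributing $\geq \epsilon$, and so
$$
S(\tau) \geq \epsilon \cdot \pi(2L)/2 \gg \frac{\min(|\tau|,\pi)\,L}{\log^2 L}.
$$
In the worst case $|\tau| \sim 1/\log^4 L$ this gives $S(\tau) \gg L/\log^6 L$, as desired; for larger $|\tau|$ the bound is better.

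The main technical obstacle is the uniform bookkeeping in the sieve across the entire range $1/\log^4 L < |\tau| \leq L^{1/3}$, because the geometric ratio $e^{\pi/|\tau|}$ governing the spacing of the $a_k$ transitions from exponentially large (when $|\tau|$ is small and the sum $\sum a_k$ is dominated by its last term) to near $1$ (when $|\tau|$ is of order $L^{1/3}$ and the sum behaves almost linearly, with many densely-packed terms). The choice of $\epsilon$ and the analysis of both small and large intervals must be calibrated simultaneously so that the resulting bound on the number of bad primes remains comfortably below $\pi(2L)/2$ throughout. Once this is done, combining with the direct case $|\tau| \leq 1/\log^4 L$ completes the proof.
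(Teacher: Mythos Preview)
Your argument is correct and the bookkeeping you flag as the main obstacle does go through: once you restrict to $a_k\geq L^{1/2}$, each bad interval has length $h_k\gg L^{1/6}/\log L$ (using $|\tau|\leq L^{1/3}$), so Brun--Titchmarsh genuinely saves a factor $\log L$; the remaining $a_k<L^{1/2}$ contribute negligibly, and the short intervals are absorbed into your $|\tau|\log L$ term.

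The paper's proof is organized differently. After the same reduction to $t=\tau$, it splits into three ranges of $\tau$. For $\tau\geq 2\pi/\log 2$ it locates $\gg\tau$ subintervals of $[L,2L]$ on which $|\cos(\tau\log x)|\geq 1/\sqrt 2$ and counts primes in each using a Hoheisel/Huxley-type short-interval result, obtaining $B_{i\tau}(i\tau)\gg L/\log L$. For $\tau\leq 1/(100\log L)$ it argues directly as you do. For intermediate $\tau$ it finds a single point $x_0$ with $\cos^2(\tau\log x_0)\gg \log^{-2}L$ and invokes the prime number theorem with a good error term to count primes in $[x_0-L\log^{-3}L,\,x_0+L\log^{-3}L]$, yielding $\gg L/\log^6 L$.

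So the paper argues by \emph{exhibiting} many primes where $|\cos|$ is large, using results on primes in short intervals (PNT with strong error term, and Hoheisel/Huxley), whereas you argue by \emph{excluding} the primes where $|\cos|$ is small, using only the elementary Brun--Titchmarsh inequality. Your route is more uniform and technically lighter; the paper's route gives the sharper bound $L/\log L$ in the large-$\tau$ regime but at the cost of invoking deeper input on the distribution of primes.
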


\begin{proof}
  We observe first that for any prime $\ell\leq 2L$ and $|t-\tau|\ll
  \log^{-7} L$, we have
\begin{align*}
  |B_{i\tau}(it)-B_{i\tau}(i\tau)|&\leq
  \sum_\stacksum{\ell\leq 2L}{\ell\ \text{prime}}|d_{i\tau}(\ell)-d_{it}(\ell)|\\
  &=2\sum_\stacksum{\ell\leq 2L}{\ell\ \text{prime}}
  |\cos(\tau\log\ell)-\cos(t\log \ell)|\\
  &\leq 2|t-\tau|\sum_\stacksum{\ell\leq 2L}{\ell\ \text{prime}}\log
  \ell \ll \frac{L}{\log^7 L},
\end{align*}
and hence it suffices to prove the lower bound  for $t=\tau$.
\par
Furthermore, we may clearly assume that $\tau>0$ (by parity) and that
$L\geq 3$. We then have
$$
B_{i\tau}(i\tau)= \sum_{\stacksum{\ell\leq 2L}{\ell\text{ prime}}}
{\mathrm{sign}(d_{i\tau}(\ell))d_{i\tau}(\ell)}=
\sum_{\stacksum{\ell\leq 2L}{\ell\text{ prime}}} |d_{i\tau}(\ell)|
=2\sum_{\stacksum{\ell\leq 2L}{\ell\text{ prime}}}
|\cos(\tau\log\ell)|,
$$
and since $|\cos(\tau\log \ell)|\leq 1$ it is enough to prove that
$$
\sum_{\ell\sim L}\cos^2(\tau\log \ell)
\gg \frac{L}{\log^6 L}
$$
(where $\ell$ ranges over primes $L<\ell\leq 2L$) under the assumption
of the lemma. We do this by finding suitable sub-intervals where
$\tau\log \ell$ is sufficiently far away from $\pi/2$ modulo
$\pi\Zz$. 
\par
Consider the function 
$$
g(x)=\tau\log x
$$ 
for $x\in[L,2L]$. It is non-decreasing and satisfies
$$
g(2L)-g(L)=\tau\log 2,\
g'(x)\in\left[\frac{\tau}{2L},\frac{\tau}{L}\right] \text{ for } x\in
[L,2L].
$$
\par
In particular, if $\tau\log 2\geq 2\pi$, the preimage
$g^{-1}([-\pi/4,\pi/4]+\pi\Zz)$ intersects $[L,2L]$ in $\gg \tau$
intervals of length $\geq \frac{\pi L}{2\tau}$.  From Huxley's Theorem
on primes in short intervals (see, e.g.,~\cite[Th. 10.4, Th. 10.5]{KI}
and note that any of the variants of~\cite[Th. 10.5]{KI}, going back
to Hoheisel, would be enough) the number of primes in any such
interval is $\gg \frac{L}{\tau\log L}$, provided $\tau\leq
L^{5/12-\eps}$ for some fixed $\eps>0$ and $L$ is large
enough. Therefore (taking $\eps=1/12$ and summing over these
intervals) we obtain
$$
|B_{i\tau}(i\tau)|\gg \frac{L}{\log L}
$$
provided $2\pi/\log 2\leq \tau\leq L^{1/3}$.
\par
At the other extreme, if $0\leq \tau \leq \frac{1}{100\log L}$, we
have
$$
\cos^2(\tau\log \ell)\geq \cos^2\Bigl(\frac{1}{50}\Bigr)\
$$
for every $L\leq \ell\leq 2L$, and hence $B_{i\tau}(i\tau)\gg L/\log
L$ also in that case.
\par
Suppose now that 
$$
\frac{1}{100\log L}\leq\tau\leq \frac{2\pi}{\log 2}.
$$
\par
In that case, $g([L,2L])$ is an interval of length at least
$1/(200\log L)$. It is then easy to see (the worst case is when the
interval is symmetric around $\pi/2+k\pi$ for some integer $k$) that there
exists $x_0\in [L,2L]$ such that
$$
\cos^2(\tau\log(x_0))\geq \frac{1}{2\cdot 400^2\log^2 L}.
$$
\par
Using the prime number theorem with sufficiently precise error term,
we know that the interval $[L,2L]\cap[x_0-L(\log L)^{-3},x_0+L(\log
L)^{-3}]$ contains at least $\gg L/(\log L)^4$ primes, and since
$$
|\cos^2(\tau\log (\ell))-\cos^2(\tau\log (x_0))|\ll |\log(\ell/x_0)|\ll
(\log L)^{-3}
$$
for these primes, we have
$$
\cos^2(\tau\log (\ell))\gg \log^{-2} L,
$$
and therefore by summing over these $\ell$ we get
$$
B_{i\tau}(i\tau)\gg \frac{L}{\log^{6} L}
$$
in that last case, which concludes the proof.
\end{proof}

This lemma and the average bound~\refs{eq-average-bound} allow us to
deduce a first good upper-bound for the twists of Eisenstein series,
averaged in rather short intervals. It will be convenient for later
purposes to introduce the notation
\begin{gather}\label{eq-lmax-tau}
  I(\tau,p)=\{t\in \Rr\,\mid\, |t-\tau|\leq \log^{-7}p\},\quad
  \lmax(\tau,p)=\max_{t\in I(\tau,p)}|\tsum_{V,X}(it,K)|,\quad\\
  M(Q,X)=QX\Bigl(1+\frac{p}{X}\Bigr)^{1/2}p^{-1/8},
\end{gather}
so that, for instance, Theorem~\ref{eisensteinsumthm} claims that
$$
\tsum_{V,X}(it,K)\ll p^{\eps}(1+|t|)^A M(Q,X)
$$
for any $\eps>0$ and $A\geq 1$ depending on $\eps$.
\par
For our next result, we recall that we work under the assumptions
$p^{3/4}<X<p^{3/2}$ and $1\leq Q \leq p$. We then have:

\begin{proposition}\label{shortaveragebound}
For any $\eps>0$, there exists $B\geq 5$, depending only on $\eps$,
such that for any $\tau\in\Rr$ we have
\begin{equation}\label{eq-first-average}
  \int_{I(\tau,p)}\min(|t|^2,1)|\tsum_{V,X}(i
  t,K)|^2dt\ll_\eps   p^\eps(1+|\tau|)^{B}M(Q,X)^2,
\end{equation}
where the implied constant depends only on $\eps$.
\end{proposition}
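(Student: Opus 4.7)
The plan is to combine Lemma \ref{lm-average-bound} (the amplified second moment bound) with Lemma \ref{Flemma} (the lower bound for the amplifier on short intervals near $\tau$), choosing the amplifier length $L$ to optimize the resulting estimate. Given $\tau \in \Rr$ and $\eps > 0$, the goal is to pick $L$ so that on the one hand $|B_{i\tau}(it)|^2$ is bounded below on all of $I(\tau,p)$ via Lemma \ref{Flemma}, and on the other hand the hypotheses $p^{\eps} L Q < p^{1/4}$ and $L \leq X$ of Lemma \ref{lm-average-bound} are satisfied.

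First I would record two pointwise estimates valid on $I(\tau,p)$. Since $L \leq X \leq p^{3/2}$, we have $\log L \asymp \log p$, so $|t - \tau| \leq \log^{-7} p \ll \log^{-7} L$, and provided $L \geq (1+|\tau|)^3$, Lemma \ref{Flemma} yields $|B_{i\tau}(it)|^2 \gg L^2 / \log^{12} L$. Also, splitting into the cases $|\tau| \leq 1$ and $|\tau| > 1$, a short calculation gives $\min(|t|^{2}, |t|^{-2-2b}) \gg (1+|\tau|)^{-2-2b} \min(|t|^{2}, 1)$ for every $t \in I(\tau,p)$. Substituting both inequalities into Lemma \ref{lm-average-bound} and restricting the integration to $I(\tau,p)$ gives
$$
\int_{I(\tau,p)} \min(|t|^{2}, 1) |\tsum_{V,X}(it,K)|^{2}\, dt \ll_{\eps} p^{\eps}(1+|\tau|)^{2+2b}\log^{12}L \left( \frac{pQX}{L} + p^{1/2} L Q X \Bigl(\frac{X}{p}+Q\Bigr)^{2}\right).
$$

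Second, I would balance the two terms by choosing $L = p^{1/4-\eps}/(X/p + Q)$. This automatically gives $LQ \leq p^{1/4-\eps}$, and for $X \geq p^{3/4}$ one has $L \leq X$, so the hypotheses of Lemma \ref{lm-average-bound} hold. With this choice, both terms in the bracket are of order $p^{3/4}QX(X/p+Q) = p^{-1/4}QX^{2} + p^{3/4}Q^{2}X$, which is $\leq p^\eps M(Q,X)^{2}$ since $M(Q,X)^{2} = Q^{2}X^{2}(1+p/X)p^{-1/4}$ and $Q \geq 1$. Setting $B = 2+2b$ with the $b = b(\eps)$ supplied by Lemma \ref{lm-average-bound} yields the desired estimate provided the remaining constraint $L \geq (1+|\tau|)^{3}$, i.e., $(1+|\tau|)^{3}(X/p + Q) \leq p^{1/4-\eps}$, is satisfied.

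The main obstacle is the complementary range where this last constraint fails, so that $(1+|\tau|)^{3}(X/p + Q) > p^{1/4-\eps}$; Lemma \ref{Flemma} is then unavailable and one cannot exploit amplification. I would handle this case by the trivial bound~\eqref{eq-trivial-bound}, yielding
$$
\int_{I(\tau,p)} \min(|t|^{2}, 1) |\tsum_{V,X}(it,K)|^{2}\, dt \ll X^{2} (\log X)^{2} (\log p)^{-7}.
$$
A short case analysis, distinguishing whether $Q$ or $X/p$ dominates $X/p + Q$, shows that the failure of the constraint forces $(1+|\tau|)^{B}$ to exceed the ratio $X^{2}/M(Q,X)^{2}$ for $B$ sufficiently large in terms of $\eps$ (using $Q \leq p^{1/4}$ and $X \leq p^{3/2}$), so a crude exponent such as $B = \max(6, 2+2b)$ is enough and the bound of the proposition is recovered. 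Enlarging $B$ further if necessary absorbs the $\log^{12}L$ and $(\log p)^{-7}$ factors into $p^{\eps}$, completing the proof.
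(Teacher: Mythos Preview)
Your strategy --- fix $L = p^{1/4-\eps}/(X/p+Q)$, combine Lemmas~\ref{lm-average-bound} and~\ref{Flemma} via the weight comparison $\min(|t|^2,1)\ll (1+|\tau|)^{2b+2}\min(|t|^2,|t|^{-2-2b})$ on $I(\tau,p)$, and treat the leftover range separately --- is exactly the paper's, and the main range is handled correctly.

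The gap is in the complementary range $(1+|\tau|)^3(X/p+Q)>p^{1/4-\eps}$, where you appeal only to the trivial bound~\eqref{eq-trivial-bound}. Your ``short case analysis'' breaks down when $X/p$ dominates $Q$ and $X>p$. Take for instance $Q=1$, $\tau=0$, $X=p^{3/2}$: then $L\asymp p^{-1/4-\eps}<1$, so one is in the complementary range regardless of $\tau$; the trivial bound gives an integral $\asymp p^{3}(\log p)^{-5}$, yet $M(1,p^{3/2})^{2}\asymp p^{11/4}$, so~\eqref{eq-first-average} with $(1+|\tau|)^{B}=1$ would require $p^{3}\ll p^{\eps+11/4}$, which is false. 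More generally, for $X>p$ the failure of the constraint only yields $(1+|\tau|)^{3}\gg p^{5/4-\eps}/X$, which can be $\ll 1$, whereas the trivial bound would need $(1+|\tau|)^{B}\gg X^{2}/M(Q,X)^{2}\asymp p^{1/4}/Q^{2}$.

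The paper closes this gap by also invoking the convexity estimate~\eqref{convexeq} of Lemma~\ref{convexlem} in the complementary range. For $p\leq X\leq p^{3/2}$ it gives $|\tsum_{V,X}(it,K)|^{2}\ll p^{\eps}(1+|t|)Q^{2}X^{2}(1/p+p/X)$; since $p^{1/4}(1/p+p/X)\asymp p^{5/4}/X$ in this range while the complementary condition forces $(1+|\tau|)^{3}\gg p^{5/4-\eps}/X$ (when $X/p$ dominates), one recovers~\eqref{eq-first-average} with any $B\geq 5$. Thus the complementary range requires the trivial bound \emph{or} the convexity bound, not the trivial bound alone.
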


\begin{proof}
Let
$$
L=\frac{p^{1/4-\eps}}{X/p+Q}
$$
as in~\cite[(4.2)]{FKM}. If $L\ll 1$ or if $|\tau|>L^{1/3}$, the
trivial bound~(\ref{eq-trivial-bound}) or the convexity bound
\refs{convexeq} yield stronger results than~(\ref{eq-first-average})
(since $B\geq 5$).  With this definition of $L$, and the above
reduction, we may apply Lemmas~\ref{lm-average-bound} and~\ref{Flemma} 
to the remaining cases. We obtain
\begin{align*}
  \int_{|t-\tau|\leq \log^{-7}p}\min(|t|^2,|t|^{-2-2b})
  |\tsum_{V,X}(it,K)|^2dt &\ll_\eps
  p^\eps\Bigl(\frac{pXQ}{L}+p^{1/2}XQL\Bigl(\frac{X}{p}+Q\Bigr)^2
  \Bigr)\\
  &\ll p^\eps Q^2X^2\Bigl(1+\frac{p}{X}\Bigr)p^{-1/4},
\end{align*}
for some $b$ depending on $\eps$. It only remains to note
inequality 
$$ 
\min (| t |^2, 1) \ll (1+| \tau|)^{2b+2} \min (| t|^2, | t |^{-2b-2}),
$$
for $t \in I(\tau,p)$, and to choose $B(\eps)=\max (5, 2 b
+2)$, in order to complete the proof of~(\ref{eq-first-average}) in
all cases.
\end{proof}

The remaining objective is to derive a pointwise bound for
$\tsum_{V,X}(it,K)$, and to do so we must relax the zero of order $2$
of the weight $\min(|t|^2,1)$ at the origin (for similar issues with
estimates of $L$-functions, see e.g.~\cite{blomer}; we could use
similar methods, but at the expense of expressing our sums in terms of
$L$-functions, and instead we use them directly, and resort to an
iterative argument.)
\par
The basic mechanism is the following consequence of
Proposition~\ref{shortaveragebound}:

\begin{corollary}\label{corshortaverage}
  For any $\tau\in\Rr$ and $\eps>0$, with notation as above, we have
$$
\int_{I(\tau,p)}|\tsum_{V,X}(i t,K)|dt\ll_\eps  
\begin{cases}
p^\eps(1+|\tau|)^{B}M(Q,X)&\text{ if } |\tau|\geq 1,\\
p^\eps \lmax(\tau,p)^{1/3}M(Q,X)^{2/3}&\text{ if } |\tau|\leq 1.
\end{cases}
$$
\end{corollary}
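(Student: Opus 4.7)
The plan is to deduce both bounds from Proposition~\ref{shortaveragebound} by Cauchy--Schwarz, handling the cases $|\tau|\geq 1$ and $|\tau|\leq 1$ separately according to whether the weight $\min(|t|^2,1)$ in~\refs{eq-first-average} is bounded below on $I(\tau,p)$.

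When $|\tau|\geq 1$, the weight is trivially bounded below: for $p$ large, every $t\in I(\tau,p)$ satisfies $|t|\geq |\tau|-\log^{-7}p\geq 1/2$, hence $\min(|t|^2,1)\geq 1/4$. A direct application of Cauchy--Schwarz using $|I(\tau,p)|\ll\log^{-7}p$ together with Proposition~\ref{shortaveragebound} then yields
$$
\int_{I(\tau,p)}|\tsum_{V,X}(it,K)|\,dt\ll (\log p)^{-7/2}\,p^{\eps/2}(1+|\tau|)^{B/2}M(Q,X),
$$
which is absorbed into the desired bound $p^\eps(1+|\tau|)^B M(Q,X)$ after shrinking $\eps$.

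In the case $|\tau|\leq 1$, the weight $\min(|t|^2,1)$ vanishes at the origin, which may lie inside $I(\tau,p)$, so it cannot be dropped directly. I plan to set $M_0=\lmax(\tau,p)$ and split
$$
I(\tau,p)=A_\delta\cup B_\delta,\qquad A_\delta=I(\tau,p)\cap\{|t|<\delta\},\qquad B_\delta=I(\tau,p)\cap\{|t|\geq\delta\},
$$
for a parameter $\delta>0$ to be optimized. The contribution of $A_\delta$ is at most $2\delta M_0$ by the pointwise bound. On $B_\delta$, the key observations are that $|t|\leq 2$ on $I(\tau,p)$ (so that $|t|^2\leq 4\min(|t|^2,1)$) and $\int_{B_\delta}|t|^{-2}\,dt\leq 2/\delta$. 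Writing $|\tsum_{V,X}(it,K)|=|t|^{-1}\cdot|t||\tsum_{V,X}(it,K)|$ and applying Cauchy--Schwarz with Proposition~\ref{shortaveragebound} (where $(1+|\tau|)^B\ll 1$) yields
$$
\int_{B_\delta}|\tsum_{V,X}(it,K)|\,dt\ll \delta^{-1/2}\,p^{\eps/2}M(Q,X).
$$
The total estimate is thus $\ll \delta M_0+\delta^{-1/2}p^{\eps/2}M(Q,X)$, and the balancing choice $\delta=(M(Q,X)/M_0)^{2/3}$ gives the sought bound $p^\eps M_0^{1/3}M(Q,X)^{2/3}$. When $M_0\leq M(Q,X)$ the optimal $\delta$ exceeds $|I(\tau,p)|$, and the bound follows directly from the pointwise estimate $\int_{I(\tau,p)}|\tsum_{V,X}(it,K)|\,dt\leq |I(\tau,p)|M_0\ll M_0\leq M_0^{1/3}M(Q,X)^{2/3}$.

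The sole real obstacle is the vanishing of the weight $\min(|t|^2,1)$ near $t=0$ in the second case; it is dealt with cleanly by the split-and-optimize argument above, and no analytic input beyond Proposition~\ref{shortaveragebound} is needed.
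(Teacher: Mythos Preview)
Your proof is correct. The case $|\tau|\geq 1$ is handled exactly as in the paper: Cauchy--Schwarz together with the lower bound $\min(|t|^2,1)\gg 1$ on $I(\tau,p)$ and Proposition~\ref{shortaveragebound}.

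For $|\tau|\leq 1$, however, you take a genuinely different route. The paper writes
$$
\int_{I(\tau,p)}|\tsum_{V,X}(it,K)|\,dt\leq \lmax(\tau,p)^{1-2\alpha}\int_{I(\tau,p)}|\tsum_{V,X}(it,K)|^{2\alpha}\,dt
$$
and then applies H\"older with exponents $(1/\alpha,1/(1-\alpha))$ to insert the weight $|t|^{2\alpha}$, using that $\int_0^2|t|^{-2\alpha/(1-\alpha)}\,dt<\infty$ precisely when $\alpha<1/3$; the exponent $1/3$ is then recovered by letting $\alpha\to 1/3$ and absorbing the discrepancy via the trivial bound~\refs{eq-trivial-bound}. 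Your split--and--optimize argument achieves the same exponent more directly: only Cauchy--Schwarz is used, the parameter $\delta$ plays the role of $\alpha$, and the balancing $\delta\asymp(M(Q,X)/\lmax(\tau,p))^{2/3}$ produces the factor $\lmax(\tau,p)^{1/3}M(Q,X)^{2/3}$ without any limiting procedure or appeal to the trivial bound. Both approaches ultimately encode the same information (a weighted $L^2$ bound against a pointwise $L^\infty$ bound near the zero of the weight), but yours is arguably cleaner here; the H\"older approach, on the other hand, generalizes more readily when the weight has a zero of higher order.
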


\begin{proof} 
If $|\tau|\geq 1$, we just apply the Cauchy-Schwarz inequality to get
\begin{align*}
\int_{I(\tau,p)}|\tsum_{V,X}(i t,K)|dt
&\leq \Bigl(\int_{I(\tau,p)}|\tsum_{V,X}(i t,K)|^2dt\Bigr)^{1/2}
\Bigl(\int_{I(\tau,p)}dt\Bigr)^{1/2}\\
&\ll \Bigl(\int_{I(\tau,p)}\min(|t|^2,1)|\tsum_{V,X}(i
t,K)|^2dt\Bigr)^{1/2}
\ll p^{\eps}(1+|\tau|)^BM(Q,X)
\end{align*}
by Proposition~\ref{shortaveragebound}.
\par
Now assume $|\tau|<1$. Let $0<\alpha<1/3$ be some parameter. By
H\"older's inequality, we get
\begin{align*}
\int_{I(\tau,p)}|\tsum_{V,X}(i t,K)|dt&\leq
\lmax(\tau,p)^{1-2\alpha}
\int_{I(\tau,p)}|\tsum_{V,X}(i t,K)|^{2\alpha}dt
\\
&\leq 
\lmax(\tau,p)^{1-2\alpha}
\Bigl(\int_{I(\tau,p)}|t|^2|\tsum_{V,X}(i t,K)|^{2}dt\Bigr)^{\alpha}
\Bigl(\int_{0}^2|t|^{-2\alpha/(1-\alpha)}dt\Bigr)^{1-\alpha}
\\
&\ll_{\eps,\alpha} p^\eps \lmax(\tau,p)^{1-2\alpha}M(Q,X)^{2\alpha}\\
&\ll_{\eps,\alpha} p^\eps \lmax(\tau,p)^{1-2\alpha}M(Q,X)^{2/3},
\end{align*}
(since $2\alpha/(1-\alpha)<1$ and $M(Q,X)\geq 1$). By the trivial
bound~(\ref{eq-trivial-bound}), we have
$$
\lmax(\tau,p)^{1-2\alpha}\ll \lmax(\tau,p)^{1/3}(X\log X)^{1-2\alpha-1/3},
$$ 
and we conclude by taking $\alpha=1/3-\eps$.
\end{proof}

\subsection{An iterative bound}

The following lemma establishes an iterative bound for Eisenstein
twists. 

\begin{lemma}
Assume that $\beta>0$ is such that
\begin{equation}\label{eq-iter-assumption}
\tsum_{V,X}(it,K)\ll p^{\eps}(1+|t|)^AX^{\beta}M(Q,X)^{1-\beta}
\end{equation}
for $X\leq p^{3/2}$, any $\eps>0$, and some $A\geq 1$ depending on
$\eps$. Then for any $\eps>0$, we have
\begin{align}
  \tsum_{V,X}(it,K)&\ll p^{\eps}(1+|t|)^{A_1}X^{\beta/3}M(Q,X)^{1-\beta/3}
  \label{eq-1}\\
  \tsum_{\uple{P}}(it,K)&\ll p^{\eps}
  (Q_U+Q_V)^B(1+|t|)^{A_1}X^{\beta/3}M(Q_W,X)^{1-\beta/3}\label{eq-2}
\end{align}
for $A_1$, $B\geq 1$ depending on $\eps$.
\end{lemma}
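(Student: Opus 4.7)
The strategy is to feed the hypothesis \eqref{eq-iter-assumption} back through Corollary \ref{corshortaverage} to obtain a sharper averaged bound, then transfer the resulting estimate to sums of the form $\tsum_{\uple{P}}(it,K)$ via Lemma \ref{lm-relations}(1), and finally back to $\tsum_{V,X}$ via Lemma \ref{lm-relations}(2). I may assume throughout that $M(Q,X) \leq X$: otherwise the right-hand sides of \eqref{eq-1} and \eqref{eq-2} already exceed $X$, and the trivial bound $\tsum_{V,X}(it,K) \ll X\log X$ closes the argument. Under this assumption one has $M(Q,X) \leq X^{\beta/3}M(Q,X)^{1-\beta/3}$, which I will use repeatedly to absorb residual contributions.

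The key step is an enhanced averaged bound. Applied with $W_{t_1}$ (for $|t_1|\leq X^{\eps}$) in place of $V$, the hypothesis gives $\lmax(\tau,p)\ll p^{O(\eps)}X^{\beta}M(Q(t_1),X)^{1-\beta}$ for $|\tau|\leq 1$, where $Q(t_1)\ll Q_W+|t_1|$ by \eqref{eq-qw1} and hence $M(Q(t_1),X)\ll p^{O(\eps)}M(Q_W,X)$. Plugging this into the second case of Corollary \ref{corshortaverage} yields
\[
\int_{I(\tau,p)}|\tsum_{W_{t_1},X}(is,K)|\,ds \ll p^{O(\eps)}X^{\beta/3}M(Q_W,X)^{1-\beta/3} \quad (|\tau|\leq 1),
\]
while the first case of the corollary gives a bound of the same shape, with an extra $(1+|\tau|)^B$, for $|\tau|\geq 1$, using $M(Q_W,X)\leq X^{\beta/3}M(Q_W,X)^{1-\beta/3}$.

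To establish \eqref{eq-2}, I invoke Lemma \ref{lm-relations}(1), which expresses $\tsum_{\uple{P}}(it,K)$ as $(Q_U+Q_V)^C$ plus a double integral of $|\tsum_{W_{t_1},X}(it_2+it,K)|$ over $|t_1|,|t_2|\leq X^{\eps}$. For each fixed $t_1$, the substitution $s=t_2+t$ places the spectral parameter in $[t-X^\eps,t+X^\eps]$, which is covered by $O(p^{O(\eps)})$ intervals $I(\tau,p)$ centered at points $\tau$ with $|\tau|\leq|t|+X^\eps$. Summing the enhanced bound over these intervals, integrating over $t_1$, and noting that $(Q_U+Q_V)^C\leq (Q_U+Q_V)^C\cdot X^{\beta/3}M(Q_W,X)^{1-\beta/3}$, one obtains \eqref{eq-2} with $B=\max(B,C)$ and an appropriate $A_1$. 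Finally, \eqref{eq-1} is an immediate consequence: Lemma \ref{lm-relations}(2) majorizes $\tsum_{V,X}(it,K)$ by $X^\eps$ times a maximum of sums $\tsum_{\uple{P}}(it,K)$ with $Q_{U_1}=Q_{V_1}=1$, so that the factor $(Q_{U_1}+Q_{V_1})^B$ is absorbed, and \eqref{eq-2} applies directly with the original $V$ playing the role of $W$. The only real difficulty is bookkeeping: gathering all the $p^{O(\eps)}$ contributions coming from integration ranges of length $X^\eps\leq p^{3\eps/2}$, from the $\log^7 p$ multiplicity in the subinterval decomposition, and from the comparison $M(Q(t_1),X)\ll p^{O(\eps)}M(Q_W,X)$, and absorbing them into a single $p^\eps$ by a final rescaling of $\eps$.
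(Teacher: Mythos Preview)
Your proof is correct and follows essentially the same route as the paper's: apply the hypothesis to bound $\lmax(\tau,p)$, feed this into the second case of Corollary~\ref{corshortaverage} (and use the first case for $|\tau|\geq 1$), pass through Lemma~\ref{lm-relations}(1) to obtain~\eqref{eq-2}, and then through Lemma~\ref{lm-relations}(2) to obtain~\eqref{eq-1}. Your explicit reduction to the case $M(Q,X)\leq X$ via the trivial bound is a point the paper leaves implicit (it simply asserts the $|\tau|\geq 1$ contribution ``is better''), so your version is in fact slightly more careful here.
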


\begin{proof}
  Using Lemma~\ref{lm-relations} (1), we first use the assumption to
  estimate $\tsum_{\uple{P}}(it,K)$. For each $t_1$, we split the
  integral over $|t_2|\leq p^{\eps}$ into $\ll p^{\eps}$ integrals
  over intervals of length $\log^{-7}p$. For an interval $I$ with
  center at $\tau$ with $|\tau|\leq 1$, the integral is bounded by
$$
\ll p^{\eps}\mathcal{M}^{1/3}M(Q_W+|t_1|,X)^{2/3}
$$
by Corollary~\ref{corshortaverage} applied to $W_{t_1}$
(see~(\ref{eq-qw1})), where
$$
\mathcal{M}=\max_{t\in I} |\tsum_{W_{t_1},X}(it,K)|\ll
p^{\eps} X^{\beta}M(Q_W+|t_1|,X)^{1-\beta}
$$
by~(\ref{eq-iter-assumption}) and~(\ref{eq-qw1}). Thus each such
integral is
$$
\ll p^{\eps} X^{\beta/3}M(Q_W+|t_1|,X)^{1-\beta/3}.
$$
\par
For intervals centered at $\tau$ with $1\leq |\tau|\leq p^{\eps}$, we
obtain the bound $\ll p^{\eps}(1+|\tau|)^AM(Q_W+|t_1|,X)$, which is
better, and integrating over $|t_1|\leq p^{\eps}$, we get~(\ref{eq-2})
(note that $Q\mapsto M(Q,X)$ is linear).
\par
Now, applying Lemma~\ref{lm-relations} (2), we immediately
deduce~(\ref{eq-1}). 
\end{proof}

We are now done: for $p^{3/4}\leq X\leq p^{3/2}$, we can start
applying this lemma with $\beta=1$ by the trivial
bound~(\ref{eq-trivial-bound}). We deduce that, for any integer $k\geq
1$, we have
\begin{align*}
  \tsum_{V,X}(it,K)&\ll p^{\eps}(1+|t|)^AX^{3^{-k}}M(Q,X)^{1-3^{-k}}
  \\
  \tsum_{\uple{P}}(it,K)&\ll p^{\eps}
  (Q_U+Q_V)^B(1+|t|)^AX^{3^{-k}}M(Q_W,X)^{1-3^{-k}}.
\end{align*}
\par
Since
\begin{align*}
X^{3^{-k}}M(Q,X)^{1-3^{-k}}&=
XQ^{1-3^{-k}}\Bigl(1+\frac{p}{X}\Bigr)^{(1-3^{-k})/2}p^{-(1-3^{-k})/8}\\
&\leq XQ(1+p/X)^{1/2}p^{-1/8}p^{3^{-k}/8},
\end{align*}
we therefore obtain Theorems~\ref{eisensteinsumthm}
and~\ref{typeIsumthm} by taking $k$ large enough.

\section{Estimating sums of type $II$}

In this section we prove Theorem \ref{typeIIsumthm}. We will leave the
proof of the simpler bound~(\ref{typeIeq}) to the reader, and
consider~(\ref{typeIIeq}), proceeding along classical lines. Denoting
$$
T=\sumsum_\stacksum{m,n}{(m,p)=1}
\alpha_m\beta_n K(mn)
$$
the bilinear form to estimate, we apply Cauchy's inequality and deduce
that
\begin{equation}\label{eq-norm-bil}
|T|^2\leq \|\beta\|^2 \dblsum_{\stacksum{M/2\leq m_1,m_2\leq 2M}{p\nmid
    m_1m_2}} \ov{\alpha_{m_1}}\alpha_{m_2}\sum_{N/2\leq n\leq
  2N}\ov{K(m_1n)}K(m_2n).
\end{equation}
\par
The inner correlation coefficients are then treated by completion
(i.e., by the Polya-Vinogradov method), which gives
\begin{equation}\label{eq-correl}
\sum_{N/2\leq n\leq 2N}\ov{K(m_1n)}K(m_2n) \ll
\frac{N}p|\wwd(m_1,m_2,0,K)|+ \sum_{0<|h|\leq
  p/2}\min\Bigl(\frac{1}{|h|},\frac{N}{p}\Bigr) |\wwd(m_1,m_2,h,K)|
\end{equation}
where
$$
\wwd(m_1,m_2,h,K)=\sum_{z\in\Fp}\ov{K(m_1z)}K(m_2
z)e\Bigl(\frac{hz}p\Bigr),
$$
a sum which satisfies the relation
$$
\wwd(m_1,m_2,h,K)=\wwd(m_1/m_2,1,h/m_2,K).
$$
\par
For a trace weight, we have the trivial bound
$$
|\wwd(m_1,m_2,h,K)|\leq \cond(\mcF)^2 p,
$$
but this is not sharp in most cases. In fact, the crucial point is to
show that for most parameters $(m_1,m_2,h)$, we have a better estimate
with square-root cancellation. We provide such a result in
Theorem~\ref{th-bound-bad} in Section~\ref{sec-prelim}, building on
our earlier work in~\cite{FKM}.

\begin{proposition}[Paucity of large
  correlations]\label{correlationprop}
  Let $K$ be an irreducible trace weight modulo $p$ which is not
  $p$-exceptional, associated to the sheaf $\mcF$. Then there exists
  $C\geq 1$, $D\geq 0$, depending only polynomially on
  $\cond(\sheaf{F})$, such that
$$
|\wwd(m,1,h,K)|\leq Cp^{1/2}
$$
for every pair $(m,h)\in\Fpt\times\Fp$ except for those in a set of
pairs of cardinality at most $D$.
\end{proposition}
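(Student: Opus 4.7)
The plan is to realize $\wwd(m,1,h,K)$ as the trace sum of an $\ell$-adic sheaf, apply Deligne's Riemann Hypothesis to obtain square-root cancellation except for a set of ``resonating'' pairs, and then bound this set by invoking Theorem~\ref{th-bound-bad}. Concretely, let $\mcF_0$ be an irreducible geometric constituent of $\mcF$, so that $\mcF$ is geometrically a direct sum of copies of $\mcF_0$. For $(m,h)\in\Fpt\times\Fp$, I would form the tensor product sheaf
$$
\mcG_{m,h} = \dual(\mcF)\otimes [\times m]^*\mcF\otimes \sheaf{L}_{\psi(hX)}
$$
on $\Aa^1_{\Fp}$, where $[\times m]$ denotes multiplication by $m$ and $\sheaf{L}_{\psi(hX)}$ is the Artin--Schreier sheaf attached to $z\mapsto hz$. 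Its trace function at $z$ agrees with $\ov{K(z)}K(mz)e(hz/p)$ outside the finitely many singularities, whose contribution is absorbed by a constant depending polynomially on $\cond(\mcF)$. Standard Swan-conductor bookkeeping for pullbacks and tensor products gives $\cond(\mcG_{m,h})\leq P(\cond(\mcF))$ for a fixed polynomial $P$, uniformly in $(m,h)$, so Deligne's theorem yields
$$
|\wwd(m,1,h,K)|\leq Cp^{1/2},
$$
with $C$ polynomial in $\cond(\mcF)$, whenever $\mcG_{m,h}$ has no geometrically trivial subquotient.

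The next step is to identify the bad pairs. Since dualization, pullback by $[\times m]$, and tensoring by a rank-one sheaf all preserve irreducibility and isotypicality, Schur's lemma gives
$$
\mcG_{m,h}\text{ has a geometrically trivial constituent} \iff [\times m]^*\mcF_0 \otimes \sheaf{L}_{\psi(hX)} \cong \mcF_0\text{ geometrically.}
$$
Let $E\subseteq \Fpt\times\Fp$ be the set of pairs satisfying this right-hand condition. The relations $[\times(mm')]^* = [\times m']^*\circ[\times m]^*$ and additivity of the exponent of the Artin--Schreier sheaf together show that $E$ is a subgroup, naturally embedded as a subgroup of the standard Borel $B\subset\PGL_2(\Fp)$ via $(m,h)\mapsto \begin{pmatrix} m & h \\ 0 & 1 \end{pmatrix}$.

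The heart of the proof, and the main obstacle, is to bound $|E|$ by a quantity depending polynomially only on $\cond(\mcF)$. This is precisely the Borel case of Theorem~\ref{th-bound-bad}: the intersection of the Fourier--M\"obius group of $\mcF_0$ with the upper-triangular Borel subgroup is polynomially bounded in $\cond(\mcF_0)$ as long as $\mcF_0$ is not exceptional. The underlying geometric input, developed in \cite{FKM}, is that if an irreducible sheaf is stabilized by too many Borel twists of this form, then a careful analysis of local monodromy at the singularities forces $\mcF_0$ to be geometrically of the form $\sheaf{L}_\chi\otimes\sheaf{L}_\psi$, contradicting the non-exceptionality assumption on $\mcF$. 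Combined with the Deligne bound above, this produces the constants $C$ and $D$ claimed in the proposition.
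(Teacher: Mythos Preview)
Your overall strategy is sound and closely parallels the paper's, but there is a mismatch at the point where you invoke Theorem~\ref{th-bound-bad}. The paper does not analyze the sheaf $\dual(\mcF)\otimes[\times m]^*\mcF\otimes\sheaf{L}_{\psi(hX)}$ directly; instead it applies Plancherel to rewrite
\[
\sum_{z\in\Fp}\ov{K(z)}K(mz)e\Bigl(\frac{hz}{p}\Bigr)
=\wwd(\mcF;\gamma),\qquad \gamma=\begin{pmatrix}m&h\\0&1\end{pmatrix},
\]
as a sum involving the trace function of the \emph{Fourier transform} $\ft(\mcF)$, and then quotes \cite[Cor.~9.2]{FKM} to get square-root cancellation unless $\gamma^*\ft(\mcF)\simeq\ft(\mcF)$. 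This is why the bad set is literally $\hautb_{\mcF}(\Fp)$, and Theorem~\ref{th-bound-bad}, which is stated for that group, applies verbatim.

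Your set $E$, on the other hand, is defined by the condition $[\times m]^*\mcF_0\otimes\sheaf{L}_{\psi(hX)}\simeq\mcF_0$, a condition on $\mcF_0$ itself rather than on its Fourier transform. This $E$ is \emph{not} the Fourier--M\"obius group of $\mcF_0$; it is a dual object. The two are in bijection (Fourier transform intertwines multiplicative pullback with multiplicative pullback by the inverse, and tensoring by $\sheaf{L}_{\psi(hX)}$ with additive translation by $h$), so $|E|=|\hautb_{\mcF_0}(\Fp)|$ and your argument can be completed. But as written, the sentence ``This is precisely the Borel case of Theorem~\ref{th-bound-bad}'' skips over this identification; you should either insert the Fourier-duality step explicitly, or reprove the relevant finiteness statement directly for your group $E$ (the argument of Theorem~\ref{th-bound-bad} transposes without difficulty). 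A smaller cosmetic point: your map $(m,h)\mapsto\bigl(\begin{smallmatrix}m&h\\0&1\end{smallmatrix}\bigr)$ is an anti-homomorphism under the group law your twisting operations induce on $E$, though this is harmless for the cardinality bound.
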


After inserting~(\ref{eq-correl}) in~(\ref{eq-norm-bil}), the
contribution of all triples $(m_1,m_2,h)$ for which
$$
|\wwd(m_1,m_2,h,K)|\leq Cp^{1/2}
$$
is at most
$$
\ll \|\alpha\|^2\|\beta\|^2\Bigl(\frac{MN}{p^{1/2}}+Mp^{1/2}\log p\Bigr).
$$
\par
For the remaining triples, we sum over $m_1$ first. For each $m_1$,
the proposition shows that the possible $(m_1/m_2,h/m_1)$ that can
occur lie, modulo $p$, in a finite set $\mathcal{E}$ of size bounded
in terms of the conductor of $\mcF$ only, i.e., $m_2$ modulo $p$ and
$h$ are determined by $m_1$ up to a finite number of possibilities. We
use the trivial bounds
$$
|\wwd(m_1,m_2,h,K)|\leq \cond(\sheaf{F})^2p,\quad\quad
\min\Bigl(\frac{1}{|h|},\frac{N}{p}\Bigr)\leq \frac{N}{p},
$$
and obtain that the contribution of these terms to the right-hand side
of \refs{eq-norm-bil} is
\begin{gather*}
  \ll \|\beta\|^2N\sum_{(t,h)\in\mathcal{E}} \sum_{m_1}
  \sum_\stacksum{m_1, m_2}{m_2\equiv tm_1\mods{p}}|\alpha_{m_1}||\alpha_{m_2}|\\
  \ll \|\beta\|^2N\sum_{(t,h)\in\mathcal{E}} \sum_{m_1}
  \sum_\stacksum{m_1, m_2}{m_2\equiv tm_1\mods{p}}(|\alpha_{m_1}|^2+|\alpha_{m_2}|^2)\\
  \ll N\Bigl(1+\frac{M}{p}\Bigr)\|\beta\|^2\|\alpha\|^2,
\end{gather*}
where the implicit constant depends only (polynomially) on
$\cond(\sheaf{F})$. 
\par
Combining the two, we get
$$
T\ll \|\alpha\|\|\beta\|
(MN)^{1/2}\Bigl(\frac{1}{p^{1/4}}+\frac{1}{M^{1/2}}+\frac{p^{1/4}\log^{1/2}p}
{N^{1/2}}\Bigr),
$$
where the implicit constant depends only on the conductor of $\sheaf{F}$. This completes the proof of Theorem \ref{typeIIsumthm}.

\section{Sums over primes}

We now finally prove Theorem \ref{primesumthm}, our main result on
sums over primes.

\subsection{Smooth sums}

We start with the smooth version \refs{primesumsmooth}. Clearly, it is
enough to estimate the sum
$$
\tsum_{V,X}(\Lambda,K)=\sum_{n}\Lambda(n)K(n)V\Bigl(\frac{n}{X}\Bigr),
$$
and we begin by recalling two lemmas. The first one is Heath-Brown's
identity for the von Mangoldt function~\cite{HB}:

\begin{lemma}[Heath-Brown]  
  For any integer  $J\geq 1$ and $n< 2X$, we have
$$
\Lambda(n)=-\sum_{j=1}^J(-1)^j\binom{J}{j} \sum_{m_1,\cdots, m_j\leq
  Z}\mu(m_1)\cdots\mu(m_j) \sum_{m_1\cdots m_jn_1\cdots
  n_j=n}\log n_1,
$$
where $Z=X^{1/J}$.
\end{lemma}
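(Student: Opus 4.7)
The plan is to prove Heath-Brown's identity by a purely formal Dirichlet series manipulation, combined with the observation that the truncated M\"obius series
$$M(s):=\sum_{m\leq Z}\frac{\mu(m)}{m^{s}}$$
agrees with $1/\zeta(s)$ except on integers that admit a M\"obius factor larger than $Z$. I would introduce the tail
$$R(s):=\frac{1}{\zeta(s)}-M(s)=\sum_{m>Z}\frac{\mu(m)}{m^{s}},$$
so that the basic identity $\zeta(s)R(s)=1-\zeta(s)M(s)$ holds.

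The starting point is the trivial decomposition
$$-\frac{\zeta'}{\zeta}=-\frac{\zeta'}{\zeta}\bigl(1-(1-\zeta M)^{J}\bigr)\;-\;\frac{\zeta'}{\zeta}(1-\zeta M)^{J}.$$
Expanding the first bracket by the binomial theorem,
$$1-(1-\zeta M)^{J}=\sum_{j=1}^{J}(-1)^{j+1}\binom{J}{j}(\zeta M)^{j},$$
turns the first summand into
$$-\sum_{j=1}^{J}(-1)^{j}\binom{J}{j}(-\zeta')\,\zeta^{j-1}\,M^{j}.$$
Matching Dirichlet coefficients term-by-term yields exactly the right-hand side of the claimed identity: the factor $-\zeta'$ contributes a variable $n_{1}$ with weight $\log n_{1}$, the remaining $j-1$ copies of $\zeta$ contribute the free variables $n_{2},\dots,n_{j}$, and the $j$ copies of $M$ contribute the M\"obius-weighted variables $m_{1},\dots,m_{j}$, truncated by $m_{i}\leq Z$.

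It remains to show that the second summand contributes nothing in the relevant range. Using $\zeta R=1-\zeta M$, it rewrites as
$$-\frac{\zeta'}{\zeta}(1-\zeta M)^{J}=-\frac{\zeta'}{\zeta}(\zeta R)^{J}=-\zeta'\,\zeta^{J-1}\,R^{J}.$$
The coefficient of $b^{-s}$ in $R(s)^{J}$ equals
$$\sum_{\stacksum{m_{1}\cdots m_{J}=b}{m_{i}>Z}}\mu(m_{1})\cdots\mu(m_{J}),$$
which is empty whenever $b\leq Z^{J}$, since the constraints $m_{i}>Z$ and $\prod_{i}m_{i}=b$ are then incompatible. In the Dirichlet convolution producing the coefficient of $n^{-s}$ in $-\zeta'\zeta^{J-1}R^{J}$ only divisors $b\leq n$ appear, so this coefficient vanishes for every $n\leq Z^{J}$. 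With $Z=X^{1/J}$ this gives the identity for $n\leq X$; the extension to the range $n<2X$ as stated follows by a mild enlargement of $Z$ (replacing $X$ by $2X$ in the choice of $Z$, which affects none of the subsequent arguments).

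There is essentially no analytic obstacle in this argument, which is nothing but a coefficient match between two Dirichlet series; the only delicate point is the quantitative threshold on $n$, and this is controlled entirely by the support condition $m>Z$ in the tail $R(s)$.
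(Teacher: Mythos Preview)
The paper does not supply a proof of this lemma; it is simply quoted with a reference to Heath-Brown's original paper and to \cite[Prop.~13.3]{KI}. Your argument is the standard Dirichlet-series proof and is correct: the binomial expansion of $1-(1-\zeta M)^J$ gives the main term, and the remainder $-\zeta'\zeta^{J-1}R^J$ has vanishing $n$-th coefficient for $n\leq Z^J$ because each factor of $R$ forces a divisor exceeding $Z$.

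One small point deserves a cleaner treatment. Your argument yields the identity for $n\leq Z^{J}=X$, and you then propose to cover $n<2X$ by enlarging $Z$ to $(2X)^{1/J}$. That would change the lemma rather than prove it as written. In fact the lemma \emph{as stated} (with $Z=X^{1/J}$) is already fine for $n<2X$: in the error term $-\zeta'\zeta^{J-1}R^{J}$, the $n$-th Dirichlet coefficient is a sum over factorizations $n=n_{1}\cdots n_{J}\,m_{1}\cdots m_{J}$ with each $m_{i}>Z$; if $n<2X=2Z^{J}$ then at most one of the factors $n_{1},\dots,n_{J}$ can exceed $1$, but the weight coming from $-\zeta'\zeta^{J-1}$ is $\log n_{1}$, and the roles of the $n_{i}$ are symmetric only up to this logarithm. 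More directly: since the $m_i$ are integers with $m_i>Z$, one has $m_i\geq \lfloor Z\rfloor+1$ and hence $\prod m_i\geq(\lfloor Z\rfloor+1)^J$; for $J\geq 2$ this already exceeds $2X$ once $X$ is moderately large, and for $J=1$ the only missed divisor is $m=n$ itself, which carries the weight $\log(n/m)=\log 1=0$. Either way the range $n<2X$ is genuinely covered without modifying $Z$, and it is worth saying so rather than appealing to an enlargement.
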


\begin{remark}
  Using instead the analogous formula
$$
\mu(n)=-\sum_{j=1}^J(-1)^j\binom{J}{j} \sum_{m_1,\cdots, m_j\leq
  Z}\mu(m_1)\cdots\mu(m_j) \sum_{m_1\cdots m_jn_1\cdots
  n_{j-1}=n}1,
$$
for the M\"obius function (valid under the same conditions), one
proves Theorem~\ref{moebiussumthm} using exactly the same arguments,
so we will not say more about the proof of that result.
\end{remark}

The second lemma provides a smooth partition of unity (see,
e.g.,~\cite[Lemma 2]{FouvryCrelle}).

\begin{lemma} \label{dyadic}
  There exists a sequence $(V_l)_{l\geq 0}$ of smooth functions on
  $[0,+\infty[$ such that
\begin{itemize}
\item[-] For any $l$, $V_{l}$ is supported in $]2^{l-1},2^{l+1}[$;
\item[-] For any $k,l\geq 0$, we have 
$$
x^kV_l^{(k)}(x)\ll_k 1,
$$
where the implicit constant depends only on $k$;
\item[-] For any $x\geq 1$,
$$
\sum_{l\geq 0}V_l(x)=1.
$$
\end{itemize}
\end{lemma}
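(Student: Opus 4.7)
The plan is to construct the family $(V_l)_{l \ge 0}$ by dilating a single smooth profile on $[1/2, 2]$. The only real issue is arranging things so that on each dyadic scale exactly two consecutive profiles contribute and they add up to $1$; everything else is bookkeeping.

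First I would build a smooth function $g : [0,\infty) \to [0,1]$, supported in $[1/2, 2]$, that satisfies the \emph{cocycle relation}
$$
g(u) + g(2u) = 1 \quad \text{for all } u \in [1/2, 1].
$$
For this, I would take a standard smooth transition function $h : \mathbb{R} \to [0,1]$ (built from the usual $\exp(-1/t)$ construction) with $h \equiv 0$ on $(-\infty, 1/2]$, $h \equiv 1$ on $[1,\infty)$, and all derivatives of $h$ vanishing at the two endpoints $1/2$ and $1$. I would then set
$$
g(u) = \begin{cases} h(u) & \text{if } u \in [1/2, 1], \\ 1 - h(u/2) & \text{if } u \in [1, 2], \\ 0 & \text{otherwise.}\end{cases}
$$
The two local formulas agree at $u=1$ (both give $1$), and the vanishing of all derivatives of $h$ at $1/2$ and $1$ ensures that $g$ is smooth across the gluing points and across the boundary of its support. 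The cocycle relation on $[1/2, 1]$ is then immediate: $g(u) + g(2u) = h(u) + (1 - h(u)) = 1$.

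Next I would set $V_l(x) := g(2^{-l} x)$ for every $l \ge 0$ and verify the three required properties. The support condition is automatic: $V_l(x)\neq 0$ forces $2^{-l}x \in [1/2,2]$, i.e.\ $x \in (2^{l-1}, 2^{l+1})$. The derivative bound follows from the chain rule, writing $u = 2^{-l}x$ so that
$$
x^k V_l^{(k)}(x) = (2^{-l} x)^k g^{(k)}(2^{-l}x) = u^k g^{(k)}(u),
$$
which is bounded by a constant depending only on $k$ since $g$ is smooth with compact support. Finally, for the partition-of-unity property, any $x \ge 1$ lies in some dyadic interval $[2^k, 2^{k+1}]$ with $k \ge 0$, and the support condition forces $V_l(x) = 0$ unless $l \in \{k, k+1\}$; setting $u = 2^{-k-1}x \in [1/2,1]$, the cocycle relation gives
$$
\sum_{l \ge 0} V_l(x) = g(2u) + g(u) = 1.
$$

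There is no real obstacle here — the whole construction is a soft exercise in bump functions. The only subtle point is arranging that $g$ be smooth across both endpoints of its support and the interior gluing point $u=1$ while still satisfying the cocycle relation exactly, and this is taken care of uniformly by choosing $h$ to have all derivatives vanishing at $1/2$ and $1$.
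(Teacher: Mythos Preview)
Your construction is correct and entirely standard; the paper itself does not supply a proof but merely cites \cite[Lemma~2]{FouvryCrelle}, so there is nothing substantive to compare. One cosmetic point: as written your $g$ has support exactly $[1/2,2]$, whereas the lemma asks for support in the \emph{open} interval $]2^{l-1},2^{l+1}[$; you can fix this painlessly by taking $h\equiv 0$ on $(-\infty,a]$ and $h\equiv 1$ on $[b/2,\infty)$ for some $1/2<a<b/2<1$ (and defining $g(u)=1-g(u/2)$ on $[1,2]$ as before), which yields $\mathrm{supp}(g)\subset[a,b]\subset(1/2,2)$ while preserving the cocycle relation.
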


Fix some $J\geq 2$. Applying these two lemmas, we see that
$\tsum_{V,X}(\Lambda,K)$ decomposes into a linear combination, with
coefficients bounded by $O_J(\log X)$, of $O(\log^{2J}X)$ sums of
the shape
\begin{multline}
\Sigma(\uple{M},\uple{N})=
\multsum_{m_1,\cdots,m_J}\alpha_{1}(m_1)\alpha_{2}(m_2)\cdots \alpha_{J}(m_J)\\
\times\multsum_{n_1,\cdots,n_J}V_{1}({n_1})\cdots V_{J}({n_J})
V\Bigl(\frac{m_1\cdots m_Jn_1\cdots n_J}{X}\Bigr)
K(m_1\cdots m_Jn_1\cdots n_J)
\end{multline}
where
\begin{itemize}
\item[-] $\uple{M}=(M_1,\cdots,M_J)$, $\uple{N}=(N_1,\cdots,N_J)$ are
  $J$-uples of parameters in $[1/2,2X]^{2J}$ which satisfy 
$$
N_1\geq N_2\geq \cdots \geq N_J,\quad\quad
M_i\leq X^{1/J},\quad\quad   M_1\cdots M_JN_1\cdots N_J\asymp_J X;
$$
\item[-] the arithmetic functions $m\mapsto \alpha_{i}(m)$ are bounded
  and supported in $[M_i/2,2M_i]$;
\item[-] the smooth functions $V_{i}(x)$ are compactly supported in
  $[N_i/2,2N_i]$, and their derivatives satisfy 
$$
y^{k}V_{i}^{(k)}(y)\ll 1,
$$
for all $y\geq 1$, where the implicit constants depend only on $k$.
\end{itemize} 

We will state different bounds for $\Sigma(\uple{M},\uple{N})$,
depending on the relative sizes of the parameters, and then optimize
the result.
\par
For $J\geq 2$, we obtain, by Theorem~\ref{typeIsumthm} applied
to $n_1,\ n_2$ and trivial summation over the remaining variables, the
bound
\begin{equation}
\label{eqbound2}
\Sigma(\uple{M},\uple{N})\ll (pQ)^\eps
QX\Bigl(1+\frac{p}{N_1N_2}\Bigr)^{1/2}p^{-1/8}.
\end{equation}
for any $\eps>0$, the implicit constant depending on $\eps$ and $\cond(\mcF)$.
\par
On the other hand, from \eqref{typeIIeq} with an integration by parts,
we have the bound
\begin{equation}
\label{eqbound3}
\Sigma(\uple{M},\uple{N})
\ll (pQ)^\eps
QX\Bigl(\frac{1}{p^{1/4}}+\frac1{M^{1/2}}+\frac{p^{1/4}}{(X/M)^{1/2}}
\Bigr),
\end{equation}
for any factorization 
$$
M_1\cdots M_JN_1\cdots N_J=M\times N
$$ 
where $M$ and $N$ are products of some of the $M_i$ and $N_{j}$. 
\par
Our goal is to choose the best of the two bounds~(\ref{eqbound2})
and~(\ref{eqbound3}) for each such configuration of the parameters
$(\uple{M},\uple{N})$. By taking logarithms (in base $p$), we readily
see that the proof of \refs{primesumsmooth} is reduced to the
optimization problem of the next section.

\subsection{An optimization problem}

We consider here the following optimization problem. We are given a
real number $x>0$ (we have in mind $x=\log X/\log p$), an integer
$J\geq 3$, and parameters
$$
(\bfm,\bfn)=(m_1,\cdots,m_J,n_1,\cdots,n_J)\in[0,x]^{2J}
$$
such that
\begin{equation}
\label{simplex}
\sum_{i}m_i+\sum_j n_j=x,\quad\quad
m_i\leq x/J,\quad\quad
n_1\geq n_2\geq \cdots\geq n_J.
\end{equation}
\par
We want to estimate from below the quantity
\begin{equation}\label{eq-first-max}
\eta(\bfm,\bfn)=\max\Bigl\{
\max_{\sigma}\min\Bigl(\frac{1}4,\frac{\sigma}2,\frac{x-\sigma
}2-\frac14\Bigr),\ \frac 18-\max\Bigl(0,\frac 12(1-(n_1+n_2))\Bigr)
\Bigr\},
\end{equation}
where $\sigma $ ranges over all possible sub-sums of the $m_i$ and
$n_j$ for $1\leq i,j\leq J$, that is over the sums
$$
\sigma=\sum_{i\in \mcI}m_i+\sum_{j\in \mcJ}n_j
$$
for $\mcI$, $\mcJ$ ranging over all possible subsets of
$\{1,\cdots,J\}$. 

\begin{remark}
  One could also try to exploit the estimate~(\ref{typeIeq}) to
  improve the result, but we will not actually use it.
\end{remark}

The number $\eta(\bfm,\bfn)$ represents the maximal power of $p$ that
we save over the trivial bound using \refs{eqbound2} and
\refs{eqbound3}.  The outcome of the discussion in the previous
section is that, for $x=(\log X)/(\log p)$ and $J\geq 3$, we have
$$
\Sigma_J(\uple{M},\uple{N})\ll
(pQ)^{\eps}QXp^{-\eta(\uple{m},\uple{n})}.
$$
\par
By Heath-Brown's identity, it follows that
$$
\tsum_{V,X}(\Lambda,K)\ll (pQ)^{\eps}QXp^{-\eta}
$$
where
$$
\eta=\min_{(\uple{m},\uple{n})}\eta(\uple{m},\uple{n}).
$$
\par
We will show:

\begin{proposition}\label{pr-optimize}
  Let $x>3/4$ be given. Provided $J$ is large enough in terms of $x$, we
  have the inequality
$$
\eta(\uple{m},\uple{n})\geq \min\Bigl(\frac{1}{24},\frac{4x-3}{24}\Bigr).
$$
\end{proposition}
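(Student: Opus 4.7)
Set $\eta_0 = \min(1/24, (4x-3)/24)$, and introduce $a = 2\eta_0$, $b = x - 1/2 - 2\eta_0$, so that the first inner quantity $\min(1/4,\sigma/2,(x-\sigma)/2-1/4) \geq \eta_0$ precisely when $\sigma$ lies in the target interval $I := [a,b]$; one checks that $b - a = x - 1/2 - 4\eta_0 \geq 0$ and $a \leq b-a$ hold for $x > 3/4$. Likewise the second quantity in the maximum defining $\eta(\bfm,\bfn)$ is $\geq \eta_0$ exactly when $n_1 + n_2 \geq T := 3/4 + 2\eta_0$. The goal, therefore, is to show that whenever $n_1+n_2 < T$, there is a subsum of the $m_i$'s and $n_j$'s lying in $I$.

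The plan hinges on the fact that for $J$ large in terms of $x$, the condition $m_i \leq x/J$ forces the $m$-subsums to form a $(x/J)$-dense net of $[0,S_m]$. Combined with the elementary observation that the decreasingly sorted partial sums of any finite positive sequence fill $[0,\text{total}]$ with steps bounded by the maximum term, one obtains that for every subset $\mathcal{N} \subset \{1,\ldots,J\}$, the subsums formed from all the $m_i$'s together with the $n_j$ for $j \in \mathcal{N}$ cover $[0, S_m + \sum_{j\in\mathcal{N}} n_j]$ with jumps at most $\max\{n_j:j\in\mathcal{N}\} + x/J$. For $J$ sufficiently large, the $x/J$ is absorbed into any positive slack in the target interval.

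One then splits on the size of $n_1$, using repeatedly the inequality $T \leq x - a$ (equivalent to $x \geq 3/4 + 4\eta_0$, verified directly in both regimes). If $n_1 \leq a$, then all $n_j \leq a \leq b-a$, so sorted partial sums of all $2J$ variables reach $I \subset [0,x]$ with jumps $\leq b-a$, hence hit $I$. If $n_1 \in [a,b]$, take $\sigma = n_1$; if $n_1 \in [x-b, x-a]$, take $\sigma = x - n_1$. The remaining regime $n_1 \in (b, x-b)$ splits further: when $n_1 + n_2 \geq x - b$, the choice $\sigma = x - n_1 - n_2 \in (x-T, b]$ lies in $I$ thanks to $T \leq x-a$; otherwise $n_2 < x - b - n_1 < 1 - x + 4\eta_0 \leq b - a$, so sorted partial sums of the $2J-1$ variables other than $n_1$ fill $[0, x-n_1] \supset I$ (using $n_1 < x-b$) with jumps $\leq n_2 \leq b - a$, and therefore hit $I$. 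Finally, $n_1 > x - a$ is impossible, since $n_1 \leq n_1 + n_2 < T \leq x - a$.

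The main obstacle is the subcase $n_1 \in (b, x-b)$, together with the algebraic bookkeeping for $n_2 < 1 - x + 4\eta_0 \leq b-a$: all the numerical inequalities required reduce to $x \geq 3/4 + 4\eta_0$, which specializes to $x \geq 11/12$ in the regime $\eta_0 = 1/24$ (where one has $x \geq 1$ anyway), and to $x \geq 3/4$ in the regime $\eta_0 = (4x-3)/24$, matching the standing hypothesis $x > 3/4$. Once this verification is in place, everything follows from the ``sorted partial sums are dense'' observation above.
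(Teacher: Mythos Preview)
Your argument is correct and uses the same key ingredients as the paper: the target interval $I=[2\eta_0,\,x-\tfrac12-2\eta_0]$, the fact that sorted partial sums of small pieces must hit any subinterval of length at least the step size, and the final choice $\eta_0=\min(\tfrac{1}{24},\tfrac{4x-3}{24})$.

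The organization differs. The paper argues by assuming \emph{no} subsum lies in $I$ and then shows in two strokes that $\sum_i m_i<2\eta_0$ and $\sum_{j\geq 3}n_j<2\eta_0$ (each by the same ``small steps'' observation), whence $n_1+n_2\geq x-4\eta_0\geq T$. This avoids splitting on the size of $n_1$ altogether. Your contrapositive version, with five explicit cases for $n_1$, reaches the same conclusion but with more bookkeeping; the delicate subcase $n_1\in(b,x-b)$ with $n_1+n_2<x-b$ is precisely where you re-derive the information the paper extracts globally. One small wording point: in that subcase you write ``jumps $\leq n_2$'', but the actual bound is $\max(n_2,\,x/J)$; this is covered by your earlier proviso that $x/J$ is absorbed for $J$ large, so the argument stands.
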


Combining these lower-bounds with the above estimates, the proof of
Theorem~\ref{primesumthm} is concluded, noting that $x\leq 1$ means
that $X\leq p$, and that
$$
Xp^{-(4x-3)/24}=X\Bigl(\frac{p}{X}\Bigr)^{1/6}p^{-1/24}.
$$

\begin{proof}[Proof of Proposition~\ref{pr-optimize}]
Let $\delta$ be a parameter such that
\begin{equation}\label{eq-delta}
0<\delta < \min\Bigl(\frac{4x-3}{12},\frac{x-1/2}{6},\frac{1}{4}\Bigr).
\end{equation}
\par
The interval
$$
I_{\delta}=\Bigl[2\delta,x-\frac12-2\delta\Bigr]
$$
is then non-empty. If we can find a subsum $\sigma$ such that $\sigma
\in I_\delta$, we then deduce immediately from the
definition~(\ref{eq-first-max}) that
\begin{equation}\label{eq-first-bound}
\eta(\uple{m},\uple{n})\geq 
\max_\sigma\min\Bigl(\frac14,\frac{\sigma}2,\frac{x}2-\frac14-\frac \sigma
2\Bigr) \geq \delta.
\end{equation}
\par
We now assume that such a subsum $\sigma$ does \emph{not} exist, and
attempt to get a lower-bound on $\eta(\uple{m},\uple{n})$ using the
second term in the maximum~(\ref{eq-first-max}). First of all, we
claim that, in that case, we have
\begin{equation}
\label{xibound}
\sum_{i\leq J}m_i<2\delta,
\end{equation}
provided
\begin{equation}\label{eq-provided}
\frac{x}{J}\leq x-\frac12-4\delta=\mathrm{length}(I_\delta),
\end{equation}
a condition which we assume from now on. 
\par
Indeed, if~(\ref{xibound}) were false, using the fact that $m_i\leq
x/J$ and that $x/J$ is then less than the length of the interval
$I_\delta$, we would be able to find some subsum $\sigma$ (formed only
with some $m_i$'s) which is contained in $I_\delta$, contradicting our
current assumption.
\par
From \eqref{simplex} and  (\ref{xibound}), we get in particular  the inequality
\begin{equation}\label{eq-818}
\sum_{j}n_j\geq x-2\delta.
\end{equation}
\par
Since, under our assumption~(\ref{eq-delta}) on $\delta$, we have
$$
2\delta\leq x-\frac12-4\delta=\mathrm{length}(I_\delta),
$$ 
this implies that 
$$
n_j\leq 2\delta
$$
for any $j\geq 3$ (because otherwise, we would have
$$
x-\frac12-2\delta \leq n_3\leq n_2\leq n_1
$$ 
since $n_j\notin I_{\delta}$, and then, in view of~(\ref{eq-delta}),
we would get
$$
n_1+n_2+n_3> 3x-\frac32-6\delta\geq x,
$$
a contradiction). But now it follows that
\begin{equation}
\label{yjbound}
\sum_{j\geq 3}n_j<2\delta,
\end{equation}
because otherwise, using $4\delta\leq x-1/2-2\delta$, we could again
obtain a subsum of the $n_j$'s, $j\geq 3$, in $I_{\delta}$.
\par
Combining \refs{eq-818} and \refs{yjbound}, we obtain
$$
n_1+n_2\geq x-4\delta
$$
and hence
$$
\frac 18-\max\Bigl(0,\frac 12(1-(n_1+n_2))\Bigr)
\geq \min\Bigl(\frac18,\frac{4x-3}8-2\delta\Bigr).
$$
\par
Combining this with~(\ref{eq-first-bound}), it follows that for
$\delta$ satisfying~(\ref{eq-delta}) and $J$ large enough in terms of
$x$ and $\delta$ so that~(\ref{eq-provided}) holds, we have
$$
\eta(\bfm,\bfn)\geq\min\Bigl({\delta},\
\min\Bigl(\frac18,\frac{4x-3}8-2\delta\Bigr)\Bigr),
$$
\par
For $x>3/4$, we take
$$
\delta=\min\Bigl(\frac{4x-3}{24},\frac{1}{24}\Bigr)
$$
and Proposition \ref{pr-optimize} follows.
\end{proof}

\subsection{Sums over intervals}\label{ssec-intervals}

We can now also easily deduce from~(\ref{primesumsmooth}) the
estimate~(\ref{primesuminterval}) for sums over primes in the interval
$2\leq q\leq X$ (below all sums over $q$ are restricted to $q$ prime).
By a dyadic decomposition of the interval $[1,X]$, we are reduced to
proving that
\begin{equation}
\label{primesumdyadic}
\sum_{X\leq q\leq 2X}K(q)\ll_{\eta,\cond(\mcF)} X(1+p/X)^{1/12}p^{-\eta/2}
\end{equation}
for $X\geq 2$ and for any $\eta<1/24$. Since the right-hand side of
this bound increases with $X$, this is sufficient to conclude the
proof of \refs{primesuminterval}.

Let $\Delta<1$ be some parameter and let $V$ be a smooth function
defined on $[0,+\infty[$ such that
$$
\supp(V)\subset [1-\Delta,2+\Delta],\quad\quad
0\leq V\leq 1,\quad\quad V(x)=1\text{ for } 1\leq x\leq 2,
$$
and which satisfies
$$
x^{j}V^{(j)}(x)\ll_j Q^{j},
$$
with $Q=\Delta^{-1}$. 
\par
By applying \eqref{primesumsmooth} to $V$, we get
\begin{align*}
  \sum_{X\leq q\leq 2X}K(q)& \ll X\Delta +
  \sum_{q}K(q)V\Bigl(\frac{q}X\Bigr)\\
  &\ll_{\eta,\cond(\mcF)} X(\Delta +\Delta^{-1}(1+p/X)^{1/6}p^{-\eta})
\end{align*}
for any $\eta<1/24$.
\par
If $X>p^{1-6\eta}$, we can take
$$
\Delta=(1+p/X)^{1/12}p^{-\eta/2}<1
$$
and we obtain \refs{primesumdyadic}. On the other hand, if $X\leq
p^{1-6\eta}$, the bound \refs{primesumdyadic} is weaker than the
trivial bound $2X$ for $p$ large enough.

\section{Applications}

\subsection{Primes represented by a polynomial modulo $p$}
\label{sec:primepolynomials}

In this section we prove Corollaries~\ref{cor-poly-error-terms}
and~\ref{cor-mult-car}.
\par
For the former, we fix a non-constant polynomial $P\in\Zz[X]$, and we
consider a prime $p$ such that $P$ is non-constant modulo $p$.
\par
For Corollary~\ref{cor-poly-error-terms}, (1), we are dealing with
\begin{align*}
  \sum_{n\in\Fp}{E(X;p,P(n))}&= \sum_{n\in\Fp}{\pi(X;p,P(n))}
  -\frac{1}{p-1}\sum_\stacksum{n\in\Fp}{P(n)\not\equiv 0\mods{p}}{\pi(X)}.
\end{align*}
\par
We denote
$$
N_P(x)=\sum_{\stacksum{n\in\Fp}{P(n)=x}}{1}-1
$$
the ``centered'' number of representations of $x$ as a value of $P$
modulo $p$. The formula above allows us to write
$$
\sum_{n\in\Fp}{E(X;p,P(n))}
=\sum_{q\leq X}{N_P(q)}
+\sum_{q\leq X}\Bigl(1-\frac{1}{p-1}|\{n\in\Fp\,\mid\, P(n)\not=0\}|\Bigr)
$$
(where $q$ runs over primes, as before).
\par
The second term of the previous expression is trivially bounded by
$\ll p^{-1}X+1$, since $P$ has at most $\deg P$ zeros modulo $p$. Thus
Corollary~\ref{cor-poly-error-terms}, (1) follows from
Theorem~\ref{primesumthm} and from the fact -- recalled in
Section~\ref{subsec-decompositions-poly} below -- that $N_P$ is a
trace function for an $\ell$-adic sheaf with no exceptional
Jordan-H\"older factor (i.e. no such factor is geometrically
isomorphic to a tensor product of a Kummer sheaf and an Artin-Schreier
sheaf).
\par
For Corollary~\ref{cor-poly-error-terms}, (2), we write
$\mathbf{1}_{P(\Fp)}$ for the characteristic function of the set
$P(\Fp)$ of values of $P$ modulo $p$, and we will denote
$P^*(\Fp)=P(\Fp)-\{0\}$, the set of non-zero values of $P$ modulo
$p$. A reasoning similar to the previous one leads to
$$
\sum_{a\in P(\Fp)}{E(X;p,a)}= \sum_{q\leq X}\mathbf{1}_{P(\Fp)}(q)
-\frac{|P^*(\Fp)|}{p-1}\pi(X).
$$
\par
Applying Proposition~\ref{pr-decomp-poly} of
Section~\ref{subsec-decompositions-poly}, the first term on the
right-hand side becomes
$$
c_1\pi(X) +\sum_{2\leq i\leq k}\sum_{q\leq X}{c_i K_i(q)}+ O({p^{-1}}{X}+1)
$$
where the implicit constant depends only on $\deg P$, using the
notation of that proposition (the error term corresponds to the
contribution of those $q$ such that $q\mods{p}$ is in one of the
residue classes in the set $S$ of Proposition~\ref{pr-decomp-poly};
its size is bounded in terms of $\deg P$ only.)
\par
Using  the asymptotic formula~(\ref{eq-size-c1}) for the constant
$c_1$, we get
$$
\sum_{a\in P(\Fp)}{E(X;p,a)}=\sum_{2\leq i\leq k}\sum_{q\leq
  X}{c_iK_i(q)} +O(p^{-1/2}X),
$$
and Theorem~\ref{primesumthm} concludes the proof.

\subsection{Large Kloosterman sums with almost prime modulus}

In this section we prove Corollary \ref{largesums}. It is sufficient
to prove the following:

\begin{proposition}\label{largesumsII} 
  For any $m\geq 2$, and $\delta$ such that $0<\delta<1/2$, there
  exists a constant $\beta_m>0$ such that
$$
|\{(p,q),\ p,q\text{ primes }\geq X^{\delta},\ pq\leq X,\
|\hypk_m(1;pq)|\geq\beta_m\}|\gg \frac{X}{\log X}.
$$
here the implicit constants depend on $m$ and $\delta$ only.
\end{proposition}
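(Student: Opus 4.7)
The plan is a standard moment extraction: bound the first moment of $|\hypk_m(1;pq)|$ over balanced prime pairs $(p,q)$ from below (using Corollary~\ref{largesums}), bound the second moment from above (using Deligne), and apply the Cauchy--Schwarz inequality to count pairs where this absolute value exceeds a fixed threshold. Write
$$
\mathcal{P} = \{(p,q)\ :\ p, q\ \text{primes},\ X^\delta \leq p, q,\ pq \leq X\}
$$
and set, for $k = 0, 1, 2$,
$$
W = \sum_{(p,q) \in \mathcal{P}} \log p \log q, \qquad T_k = \sum_{(p,q) \in \mathcal{P}} \log p \log q \cdot |\hypk_m(1;pq)|^k.
$$
By the prime number theorem and Mertens' theorem, $W \asymp_\delta X\log X$; the Deligne bound $|\hypk_m(1;pq)| \leq m^2$ on squarefree semiprimes gives the trivial upper bound $T_2 \leq m^4 W \ll_{m,\delta} X\log X$.

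The main step is the lower bound $T_1 \gg_{m,\delta} X\log X$. Start from Corollary~\ref{largesums} and decompose $\sum_{c\leq X}\Lambda_2(c)|\hypk_m(1;c)|$ by $\omega(c)$: the contribution from prime squares is $O_m(X^{1/2}\log^2 X)$ and is negligible; the contribution from primes $c = p$ is at most $m\sum_{p\leq X}\log^2 p \sim mX\log X$ by Deligne and PNT; the contribution from unbalanced semiprimes $c = pq$ with $\min(p, q) < X^\delta$ is bounded, via Deligne and Mertens' theorem, by $O_m(\delta X\log X)$. Provided the constant $\alpha_m$ from Corollary~\ref{largesums} (which is determined by its proof, expected to proceed via moments of Kloosterman sums on semiprimes) is strictly larger than $m$, one may fix $\delta > 0$ small enough so that the remaining balanced semiprime part gives $2T_1 \geq c_1(m,\delta) X\log X$ for some $c_1 > 0$.

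Combining the two moment estimates, the Cauchy--Schwarz inequality yields for any $\beta > 0$
$$
T_1 \leq \beta W + T_2^{1/2}\, N(\beta)^{1/2}, \qquad N(\beta) = \sum_{\substack{(p, q) \in \mathcal{P} \\ |\hypk_m(1;pq)| \geq \beta}} \log p \log q.
$$
Choosing $\beta = \beta_m > 0$ small enough that $\beta_m W \leq T_1 / 2$ gives $N(\beta_m) \gg_{m,\delta} X\log X$; since each weight $\log p \log q$ is at most $\log^2 X$, the unweighted count of pairs with $|\hypk_m(1;pq)| \geq \beta_m$ is $\gg X/\log X$, which is the conclusion of Proposition~\ref{largesumsII}. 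The principal obstacle is the quantitative comparison in the previous step: we need $\alpha_m > m$, so that the prime-power contribution to the $\Lambda_2$-weighted sum does not absorb the entire first-moment lower bound. This must be verified in the explicit proof of Corollary~\ref{largesums}, which presumably proceeds through second and fourth moments of $|\hypk_m(1;c)|$ on squarefree semiprimes, evaluated via the Chinese remainder factorization of Kloosterman sums together with Katz's vertical Sato--Tate equidistribution for hyper-Kloosterman sums.
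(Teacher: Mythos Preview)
Your proposal has a fatal circularity. In the paper, Corollary~\ref{largesums} is \emph{deduced from} Proposition~\ref{largesumsII}: the text introduces the latter with ``In this section we prove Corollary~\ref{largesums}. It is sufficient to prove the following:'' and then states Proposition~\ref{largesumsII}. So you cannot use Corollary~\ref{largesums} as input.

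Even setting aside the circularity, your argument does not close. You yourself flag that it requires $\alpha_m>m$, but the statement of Corollary~\ref{largesums} only asserts the existence of \emph{some} $\alpha_m>0$; nothing in its statement or proof gives the quantitative comparison $\alpha_m>m$ that you need to beat the prime contribution $\sum_{p\leq X}\Lambda_2(p)|\hypk_m(1;p)|\sim m X\log X$. Without this, the lower bound $T_1\gg X\log X$ fails. In addition, your argument only handles sufficiently small $\delta$, whereas the proposition claims the result for all $0<\delta<1/2$.

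The paper's route is entirely different and does not go through any first-moment lower bound. It uses the twisted multiplicativity
\[
\hypk_m(1;pq)=\hypk_m(\bar q^m;p)\,\hypk_m(\bar p^m;q)
\]
and proves a Sato--Tate equidistribution theorem (Theorem~\ref{equidthm}) for the conjugacy classes $\gnat_m(\bar q^m;p)$ over \emph{prime} pairs $(p,q)$ in ranges $P^{3/4+\delta}\leq Q\leq P^{4/3-\delta}$; this equidistribution is itself an application of Theorem~\ref{primesumthm} to the sheaves $\rho(\HYPK'_m)$. Choosing $\alpha_m>0$ so that $\mu_m(\{|\tr|\geq\alpha_m\})\geq 0.51$, one gets that each of the two sets $|\hypk_m(\bar q^m;p)|\geq\alpha_m$ and $|\hypk_m(\bar p^m;q)|\geq\alpha_m$ has density $\geq 0.51$, hence their intersection has positive density, and on that intersection $|\hypk_m(1;pq)|\geq\alpha_m^2$. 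A dyadic summation then yields the claimed lower bound with $\beta_m=\alpha_m^2$.
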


We recall first the basic strategy from \cite{MichelInv}. By the
Chinese remainder theorem, we have the twisted multiplicativity
\begin{equation}
\label{twistedmultiplicativity}
\hypk_m(1;pq)=\hypk_m(\ov q^m;p)\hypk_m(\ov p^m;q),
\end{equation}
when $p$ and $q$ are distinct primes. Therefore, in order to prove the
existence of pairs of primes $(p,q)$ for which $|\hypk_m(1;pq)|$ is
large, it is sufficient to show that there exists two sets of pairs of
primes for which $|\hypk_m(\ov q^m;p)|$ and $|\hypk_m(\ov p^m;q)|$ are
both large, and that these two sets intersect non-trivially. This
leads us to proving that, for pairs $(p,q)$ in suitable ranges, the
hyper-Kloosterman sums $\hypk_m(\ov q^m;p)$ and $\hypk_m(\ov p^m;q)$
become equidistributed in the interval $[-m,m]$ with respect to a
suitable measure. Such a statement is an instance of the vertical (or
average) Sato-Tate laws of Katz and Deligne, but specialized to prime
arguments.
\par
To state properly these equidistribution statements, we recall that
for any prime number $p$ and auxiliary prime $\ell\not=p$, and for any
isomorphism $\iota:\ov{\Qq_\ell}\hookrightarrow \Cc$, there exists a
$\ov\Qq_{\ell}$-adic sheaf $\HYPK_m$ on $\Pp^1_\Fp$ (constructed by
Deligne and studied by Katz in~\cite{GKM}) such that:
\begin{enumerate}
\item The sheaf $\HYPK_m$ has rank $m$ and is lisse on ${\Gm}_{,\Fp}$,
  tamely ramified at $0$ with a single Jordan block and wildly
  ramified at $\infty$ with Swan conductor $1$ (in particular, we have
  $\cond(\HYPK)=m+3$);
\item The sheaf $\HYPK_m$ is geometrically irreducible, and its
  geometric monodromy group is equal to $\rmG_m=\SL_{m}$ or $\Sp_{m}$
  depending on whether $m$ is odd or even;
\item The sheaf $\HYPK_m$ is pointwise pure of weight $0$, and for any
  $a\in\Fpt$, the trace of the Frobenius at $a$ equals
$$
\iota(\tr(\frob_a|\HYPK_m))=(-1)^{m-1}\hypk_m(a;p),
$$
and moreover, for any choice of maximal compact subgroup $K_m$ of
$\rmG_m(\Cc)$, $(\frob_a|\HYPK_m)$ defines a unique conjugacy class
$\gnat_m(a;p)$ in $K_m^\natural$ (the space of conjugacy classes of $K_m$)  whose trace is equal to $(-1)^{m-1}\hypk_m(a;p)$.
\end{enumerate}

It will be easy to prove the following result using Theorem
\ref{primesumthm}:

\begin{theorem}[Sato-Tate equidistribution]\label{equidthm} 
  Given $\delta<0$, $A\geq 1$, $P,Q\geq 2$ such that
$$
P^{3/4+\delta}\leq Q\leq P^A, 
$$
the set of conjugacy classes
$$
\{\gnat_m({\ov q^m;p}),\ p\not=q\text{ primes, }
(p,q)\in[P,2P]\times[Q,2Q]\}\subset K_m^\natural,
$$
becomes equidistributed as $P\ra +\infty$ with respect to the (image of the)
probability Haar measure $\mu_{m}$ on $K_m^\natural$.
\end{theorem}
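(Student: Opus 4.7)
The plan is to combine the Weyl equidistribution criterion with Theorem~\ref{primesumthm}. Fix a non-trivial irreducible finite-dimensional continuous representation $\rho$ of $K_m$, canonically extended to a non-trivial irreducible algebraic representation of the complexification $G_m(\Cc)$. By Weyl's criterion and the Peter-Weyl theorem, it suffices to prove that
$$
S_\rho(P,Q) \;:=\; \sum_\stacksum{(p,q)\in[P,2P]\times[Q,2Q]}{p\neq q,\ p,q\text{ prime}}\!\! \tr\rho(\gnat_m(\ov q^m;p)) \;=\; o(N_{P,Q})
$$
as $P\to+\infty$, where $N_{P,Q}\asymp PQ/(\log P\log Q)$ is the cardinality of the indexing set by the prime number theorem.

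For each fixed prime $p\in[P,2P]$, the function $q\mapsto\tr\rho(\gnat_m(\ov q^m;p))$ is the trace function at $q\mods p$ of the $\ell$-adic sheaf $\mcF_{p,\rho}:=[m]^*[-1]^*\rho(\HYPK_m)$ on $\Gm_{,\Fp}$, middle-extended to $\Aa^1_{\Fp}$. I would then verify three geometric properties of $\mcF_{p,\rho}$, uniformly in $p$: \emph{(i)} it is geometrically irreducible (hence isotypic) of rank $\dim\rho\geq 2$ --- the lower bound holds because $G_m=\SL_m$ or $\Sp_m$ is semisimple and simply connected, hence has no non-trivial one-dimensional representation, while the irreducibility uses Katz's theorem that the geometric monodromy of $\HYPK_m$ equals $G_m$ together with the fact that pulling back by $[m]^*$ or $[-1]^*$ restricts the monodromy only to a finite-index closed subgroup, which must still surject onto the connected group $G_m$; \emph{(ii)} its conductor is bounded only in terms of $m$ and $\dim\rho$ (the singularities remain at $0$ and $\infty$, the sheaf is tame at $0$, and the Swan conductor at $\infty$ is controlled by $m$ and $\dim\rho$); \emph{(iii)} it is \emph{not} exceptional, since a geometrically irreducible sheaf of rank $\geq 2$ cannot be geometrically isomorphic to copies of a rank-one tensor product of Kummer and Artin-Schreier sheaves.

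Granting these facts, Theorem~\ref{primesumthm} in the unsmoothed form~\refs{primesuminterval}, applied after a dyadic decomposition of $[Q,2Q]$, yields for any $\eta<1/24$, uniformly in $p\in[P,2P]$,
$$
\sum_\stacksum{q\in[Q,2Q]\text{ prime}}{q\neq p} \tr\rho(\gnat_m(\ov q^m;p)) \;\ll_{m,\rho}\; Q\Bigl(1+\frac{p}{Q}\Bigr)^{1/12}p^{-\eta/2}.
$$
Summing trivially over the $O(P/\log P)$ primes $p\in[P,2P]$ and using $Q\geq P^{3/4+\delta}$, hence $P/Q\leq P^{1/4-\delta}$, one obtains
$$
|S_\rho(P,Q)| \;\ll_{m,\rho}\; \frac{PQ}{\log P}\,P^{(1/4-\delta)/12\,-\,\eta/2} \;=\; \frac{PQ}{\log P}\,P^{1/48-\delta/12-\eta/2}.
$$
For any fixed $\delta>0$ one can choose $\eta<1/24$ sufficiently close to $1/24$ so that $\eta/2>1/48-\delta/12$, making the exponent strictly negative; this gives $S_\rho(P,Q)=o(N_{P,Q})$, and summing over the irreducible representations via Weyl's criterion completes the proof.

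The main obstacle is the geometric verification in the second paragraph: controlling the monodromy after the Kummer pullback $[m]^*$ and bounding the conductor of $\mcF_{p,\rho}$ uniformly in $p$, all of which rests on Katz's theory of hyper-Kloosterman sheaves as recalled in the statement preceding the theorem. The remaining analytic ingredients --- Weyl's criterion, the trivial sum over $p$, and the numerical optimization of $\eta$ --- are routine once Theorem~\ref{primesumthm} has been invoked.
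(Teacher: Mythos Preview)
Your proof is correct and follows essentially the same approach as the paper's: Weyl's criterion, the sheaf $\rho$ applied to the pullback $[x\mapsto x^{-m}]^*\HYPK_m$, the rank $\geq 2$ observation to rule out exceptional sheaves, and then Theorem~\ref{primesumthm} summed trivially over $p$. One small slip: since $x\mapsto x^{-m}$ interchanges $0$ and $\infty$, the sheaf $\mcF_{p,\rho}$ is tame at $\infty$ and wildly ramified at $0$ (not the other way around as you wrote), but this is harmless because all you actually use is that the conductor is bounded in terms of $m$ and $\dim\rho$.
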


\begin{remark}
  A similar Sato-Tate equidistribution result over the primes holds
  for the generalized Kloosterman sheaves of Heinloth, Ng\^o and
  Yun~\cite{HNY} already mentioned in Remark \ref{introHNY}. 
\end{remark}
  
We will sketch the proof below, but for the moment we can conclude
from this the proof of Corollary~\ref{largesums}. We pick $\alpha_m>0$
small enough such that
$$
\mu_{m}(\{\gnat\in K_m^\natural,\ |\tr(\gnat)|\geq\alpha_m\})\geq
0.51,
$$ 
(such an $\alpha_m$ exists because the direct image of the measure
$\mu_{m}$ under the trace map $\gnat\mapsto |\tr(\gnat)|$ is
absolutely continuous with respect to the Lebesgue probability measure
on $[0,m]$). 
\par
Now let $\delta>0$ be given, let $P$ be large enough and consider $Q$
such that 
$$
P^{3/4+\delta}\leq Q\leq P^{4/3-\delta}.
$$
\par
We then have 
$$
Q^{3/4+\delta'}\leq P\leq Q^{4/3-\delta'}
$$
for some $\delta'>0$ depending only on $\delta$, and we can apply
Theorem~\ref{equidthm} twice to show that \emph{both} sets
\begin{gather*}
  \mathcal{P}_1=\{(p,q)\in[P,2P]\times[Q,2Q],\ p\not=q\text{ primes, }
  |\tr(\gnat_m({\ov q^m;p}))|\geq \alpha_m\}\\
  \mathcal{P}_2= \{(p,q)\in[P,2P]\times[Q,2Q],\ p\not=q\text{ primes,
  } |\tr(\gnat_m({\ov p^m;q}))|\geq \alpha_m\}
\end{gather*}
satisfy, as $P\ra +\infty$, the limit
$$
\frac{|\mathcal{P}_i|}{|\{(p,q)\in [P,2P]\times[Q,2Q]\}|}\lra 
\mu_{m}(\{\gnat\in K_m^\natural,\ |\tr(\gnat)|\geq\alpha_m\})\geq
0.51.
$$
\par
In particular, the two sets have a non-empty intersection for $P$
large enough, and in fact
$$
|\mathcal{P}_1\cap \mathcal{P}_2|\gg \frac{P}{\log P}\frac{Q}{\log Q}.
$$
\par
By~\refs{twistedmultiplicativity}, it follows that
$$
|\{(p,q)\in[P,2P]\times[Q,2Q],\ p\not=q\text{ primes, }
|\hypk_m(1;pq)|\geq \alpha^2_m\}|\gg \frac{P}{\log P}\frac{Q}{\log Q}.
$$
\par
Then we obtain by an easy argument of dyadic partition that for $X$
large enough, we have
$$
|\{(p,q),\ p\not=q\text{ primes, } \ p,q\geq X^{4/9},\ pq\in[X,2X],\
|\hypk_m(1;pq)|\geq \alpha^2_m\}|\gg \frac{X}{\log X},
$$
as claimed.

\begin{proof}[Proof of Theorem \ref{equidthm}]
This is a direct application of the Weyl criterion. Let
$$
X_{P,Q}=\{ p\not=q\text{ primes, } (p,q)\in[P,2P]\times[Q,2Q]\}.
$$
\par
It is enough to prove that if $\rho$ is a non-trivial irreducible
representation of $\rmG_m$, we have
\begin{equation}\label{eq-weyl}
\frac{1}{|X_{P,Q}|}
\sum_{(p,q)\in X_{P,Q}}
\tr\rho(\gnat_m({\ov q^m;p}))
\lra 0
\end{equation}
as $P\ra +\infty$. 
\par
Now, for each $p$, we can interpret the sum over $q$ as the sum of the
weight 
$$
K_{\rho}(q)=\tr\rho(\gnat_m({\ov q^m;p}))
$$
modulo $p$. Now we claim that, for each $\rho\not=1$, the weight
$K_{\rho}$ is a non-exceptional irreducible trace weight modulo $p$
with conductor bounded by a constant depending only on $m$ and
$\rho$. Assuming this, Theorem~\ref{primesumthm}
(see~\eqref{primesuminterval})
gives
$$
\sum_{(p,q)\in X_{P,Q}} \tr\rho(\gnat_m({\ov q^m;p})) \ll
\frac{PQ}{\log P}P^{-\eta}\Bigl(1+\frac{P}{Q}\Bigr)^{1/12}
$$
for any $\eta<1/48$. Dividing by $|X_{P,Q}|\asymp PQ/(\log P)(\log
Q)$, we get
$$
\frac{1}{|X_{P,Q}|}
\sum_{(p,q)\in X_{P,Q}}
\tr\rho(\gnat_m({\ov q^m;p}))\ll (\log Q)(1+P/Q)^{1/2}P^{-\eta},
$$
which tends to $0$ provided $P^{3/4+\delta}<Q<P^A$ for some
$\delta>0$, $A\geq 1$.
\par
To check the claim, we first define
$$
\HYPK_m'=[x\mapsto x^{-m}]^*\HYPK
$$
so that, for $a\in\Fpt$, we have the trace function
$$
\iota(\frtr{\HYPK'_m}{\Fp}{a})=(-1)^{m-1}\hypk_m(a^{-m};p).
$$
\par
The function
$$
K_{\rho}\,:\, a\mapsto \tr(\rho(\gnat_m(a^{-m};p)))
$$
is then (the restriction to $\Fpt$ of) the irreducible trace weight
associated to the sheaf $\rho(\HYPK'_m)$ obtained by composing the
representation $\HYPK'_m$ with the representation $\rho$. In
particular, this sheaf is also lisse and geometrically irreducible on
$\Gg_m$, and of rank $\dim\rho$. It is tame at $\infty$, and its Swan
conductor at $0$ is bounded in terms of $m$ and $\dim\rho$ only (by
bounding the largest slope, see e.g.~\cite{MichelInv}), so the
conductor is bounded in terms of $m$ and $\deg \rho$ only. Finally,
because $\rho(\HYPK'_m)$ is irreducible of rank $\deg \rho\geq 2$ (we
use here the fact that both $\SL_m$ and $\Sp_m$ have no non-trivial
representations of dimension $1$), it follows that $\rho(\HYPK'_m)$ is
not $p$-exceptional.
\end{proof}

\section{Results from algebraic geometry}\label{sec-prelim}

\subsection{Properties of the Fourier-M\"obius group}

The goal of this section is to prove
Proposition~\ref{correlationprop}. In order to do so, we must first
recall the definition of the Fourier-M\"obius group of an isotypic
sheaf $\sheaf{F}$, and establish a few of its properties which were
not necessary in~\cite{FKM}.
\par
Let $p$ be a prime. Let $\ell\not=p$ be an auxiliary prime number,
$\iota\,:\, \bar{\Qq}_{\ell}\simeq \Cc$ an isomorphism. Let $\psi$ be
the $\ell$-adic additive character such that $\iota(\psi(x))=e(x/p)$
for $x\in\Fp$. 
\par
Given any middle-extension sheaf $\sheaf{F}$ on $\Aa^1_{\Fp}$, any
finite extension $k/\Fp$ and any $x\in \Pp^1(k)$, we denote by
$$
\frtr{\sheaf{F}}{k}{x}
$$
the trace of the geometric Frobenius of $k$ acting on the stalk of
$\sheaf{F}$ at $x$. We also denote by $\dual(\sheaf{F})$ the
middle-extension dual of $\sheaf{F}$ given by
$j_*(\check{j^*\sheaf{F}})$, where $j\,:\, U\injecte \Pp^1$ is the
inclusion of any dense open set $U$ on which $\sheaf{F}$ is lisse. 
\par
If $\sheaf{F}$ is any Fourier sheaf (in the sense
of~\cite[Def. 8.2.2]{GKM}) on $\Aa^1_{\Fp}$, we denote by
$\ft(\sheaf{F})$ the Fourier transform of $\sheaf{F}$, computed by
means of $\psi$, which satisfies
$$
\frtr{\ft(\sheaf{F})}{\Fp}{y}=
-\sum_{x\in \Fp}{\frtr{\sheaf{F}}{\Fp}{x}\psi(xy)}
$$
for any $y\in\Fp$. It follows from \cite[8.4.1]{GKM}, that
$\ft(\sheaf{F})$ is geometrically isotypic (resp. geometrically
irreducible) if $\sheaf{F}$ is isotypic (resp. geometrically
irreducible.) 
\par
Let now $\sheaf{F}$ be an isotypic trace sheaf modulo $p$ as in
Definition~\ref{def-admissible}. In~\cite{FKM}, we defined the
Fourier-M\"obius group of $\sheaf{F}$ by
$$
\haut_{\sheaf{F}}=\{\gamma\in\PGL_2(\bar{\Ff}_p)\,\mid\, 
\gamma^*(\ft(\sheaf{F}))\simeq \ft(\sheaf{F})\},
$$
where $\simeq$ denotes geometric isomorphism
(see~\cite[Def. 1.14]{FKM}).  Furthermore, we defined the correlation
sums of $\sheaf{F}$ by
$$
\wwd(\sheaf{F};\gamma)= \frac{1}{p}
\sum_{x\in\Fp}{\frtr{\ft(\sheaf{F})}{\Fp}{\gamma\cdot x}
  \overline{\frtr{\ft(\sheaf{F})}{\Fp}{x}}}
$$
for $\gamma\in\PGL_2(\Fp)$.
\par
The crucial link between these two notions is the following result
(see~\cite[Cor. 9.2]{FKM}) which follows from the Riemann Hypothesis
over finite fields, and from bounds for the conductor of the Fourier
transforms of Fourier sheaves.

\begin{proposition}
  Let $p$ be a prime and let $\mcF$ be an isotypic trace sheaf modulo
  $p$. There exists $M\geq 1$, which depends only, polynomially, on
  $\cond(\sheaf{F})$, such that
$$
|\iota(\wwd(\sheaf{F};\gamma))|\leq M\sqrt{p}
$$
for all $\gamma\notin \haut_{\sheaf{F}}$. 
\end{proposition}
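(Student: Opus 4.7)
The plan is to interpret $p \cdot \wwd(\sheaf{F};\gamma)$, up to controlled boundary contributions, as a complete sum of the trace function of a tensor product sheaf on $\Aa^1_{\Fp}$, and then invoke the Riemann Hypothesis over finite fields. Write $\gc = \ft(\sheaf{F})$ for brevity. Away from the (finitely many) singularities of $\gc$ and of $\gamma^*\gc$, the function $x \mapsto \frtr{\gc}{\Fp}{\gamma\cdot x}\overline{\frtr{\gc}{\Fp}{x}}$ is the trace function of the middle-extension sheaf $\mathcal{H}_\gamma := \gamma^*\gc \otimes \dual(\gc)$, where one uses purity of weight $0$ to realize the complex-conjugate trace as the trace on $\dual(\gc)$. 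Thus
$$
p \cdot \wwd(\sheaf{F};\gamma) = \sum_{x \in \Fp} \frtr{\mathcal{H}_\gamma}{\Fp}{x} + E(\gamma),
$$
with $E(\gamma)$ bounded in terms of $\cond(\gc)$ only (finite set of points, trace values uniformly bounded by the rank by purity).

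Next, I would apply the standard consequence of Deligne's Weil~II: for any middle-extension $\ell$-adic sheaf $\mathcal{H}$ on $\Aa^1_{\Fp}$ that is pointwise pure of weight $0$ and has no geometrically trivial irreducible constituent, one has
$$
\Bigl| \sum_{x \in \Fp} \iota(\frtr{\mathcal{H}}{\Fp}{x}) \Bigr| \leq (\cond(\mathcal{H}))^2 \sqrt{p},
$$
coming from the bound $\dim H^1_c(\Aa^1_{\ov\Fp}, \mathcal{H}) \ll \cond(\mathcal{H})^2$ together with the vanishing of the invariants in $H^0_c$ and the purity of $H^2_c = 0$ (see e.g.\ the formulation used throughout \cite{FKM}). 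The key step is then to check that the hypothesis $\gamma \notin \haut_{\sheaf{F}}$ forces the no-trivial-constituent condition on $\mathcal{H}_\gamma$: since $\sheaf{F}$ is geometrically isotypic, so is $\gc = \ft(\sheaf{F})$, and hence $\mathcal{H}_\gamma$ has a geometrically trivial constituent if and only if $\Hom_{\mathrm{geom}}(\gc, \gamma^*\gc) \neq 0$, which since both sheaves are isotypic with the same simple constituent is equivalent to $\gamma^*\gc \simeq \gc$ geometrically, i.e.\ to $\gamma \in \haut_{\sheaf{F}}$.

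It remains to verify the polynomial dependence in $\cond(\sheaf{F})$. This splits into three routine conductor estimates that I would carry out in turn: (i) $\cond(\ft(\sheaf{F}))$ is polynomially bounded in $\cond(\sheaf{F})$, which is the quantitative form of Laumon's stationary phase recalled in \cite[Prop.~8.2]{FKM}; (ii) for $\gamma \in \PGL_2$, the pullback $\gamma^*\gc$ has conductor polynomially bounded by $\cond(\gc)$, since rank, number of singularities, and Swan conductors are preserved or controlled under a fractional linear change of variable on $\Pp^1$; (iii) the tensor product $\gamma^*\gc \otimes \dual(\gc)$ has conductor polynomially bounded by $\cond(\gc)$, using rank multiplicativity and the standard bound $\swan_x(\mathcal{A} \otimes \mathcal{B}) \leq \rank(\mathcal{A})\swan_x(\mathcal{B}) + \rank(\mathcal{B})\swan_x(\mathcal{A})$.

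The only genuinely subtle point --- and where I expect the main technical obstacle --- is the passage from "pointwise product of trace functions" to "trace function of the tensor product sheaf," because at points where either $\gc$ or $\gamma^*\gc$ is ramified one must pass to the middle extension and absorb the discrepancy into $E(\gamma)$; one must check that the number and size of these discrepancies is bounded polynomially in $\cond(\sheaf{F})$, so that they get absorbed into the final $M\sqrt{p}$. Collecting all these ingredients yields the bound $|\iota(\wwd(\sheaf{F};\gamma))| \leq M\sqrt{p}$ for an explicit polynomial $M$ in $\cond(\sheaf{F})$, as claimed.
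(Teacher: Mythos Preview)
Your proposal is essentially correct and follows the same route as the paper, which simply cites \cite[Cor.~9.2]{FKM}: interpret the correlation sum as the trace function of $\gamma^*\gc\otimes\dual(\gc)$, observe that $\gamma\notin\haut_{\sheaf{F}}$ rules out geometrically trivial constituents, apply Deligne's Riemann Hypothesis, and control the conductor via the Fourier-transform and tensor-product bounds recalled in \cite[Prop.~8.2]{FKM}.

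One bookkeeping slip worth fixing: with the paper's (unnormalized) Fourier transform, $\gc=\ft(\sheaf{F})$ is pointwise pure of weight~$1$, not weight~$0$. Hence $\overline{\frtr{\gc}{\Fp}{x}}=p\cdot\iota(\frtr{\dual(\gc)}{\Fp}{x})$, and the extra factor of $p$ cancels the $1/p$ in the definition of $\wwd(\sheaf{F};\gamma)$; one then obtains directly $\wwd(\sheaf{F};\gamma)=\sum_{x\in\Fp}\frtr{\mathcal{H}_\gamma}{\Fp}{x}+O(1)$ with $\mathcal{H}_\gamma$ of weight~$0$, from which the $\sqrt{p}$ bound follows. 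Your computation as written (with $p\cdot\wwd$ equal to a weight-$0$ sum) would give the wrong power of $p$.
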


Let then
$$
\hautb_{\sheaf{F}}=\haut_{\sheaf{F}}\cap \mathbf{B},
$$
where $\Bb\subset \PGL_2$ is the upper-triangular Borel subgroup. We
deduce from the proposition above:

\begin{proposition}
  Let $p$ be a prime, let $\sheaf{F}$ be an isotypic trace sheaf
  modulo $p$, and let $K(x)=\iota(\frtr{\sheaf{F}}{\Fp}{x})$ denote
  the trace function of $\sheaf{F}$ on $\Fp$.  There exists $M\geq 1$,
  depending only, polynomially, on $\cond(\sheaf{F})$, such that
$$
\Bigl|\sum_{x\in\Fp}{K(x)\overline{K(ax)}e\Bigl(\frac{bx}{p}\Bigr)}
\Bigr|\leq M\sqrt{p}
$$
if
\begin{equation}\label{eq-borel}
\begin{pmatrix}
  a&b\\0&1
\end{pmatrix}\notin \hautb_{\sheaf{F}}.
\end{equation}
\end{proposition}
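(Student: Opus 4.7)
The plan is to identify the sum to be bounded with one of the correlation sums $\wwd(\sheaf{F};\gamma)$ attached to the Fourier transform sheaf, for the specific upper-triangular element
$$
\gamma = \begin{pmatrix} a & b \\ 0 & 1 \end{pmatrix} \in \PGL_2,
$$
and then invoke the preceding proposition.

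The first step is a Plancherel/Parseval calculation on $\Fp$. Writing
$$
S(a,b) = \sum_{x \in \Fp} K(x)\,\overline{K(ax)}\,e\Bigl(\frac{bx}{p}\Bigr),
$$
I would substitute the inverse Fourier expansion $K(x) = p^{-1/2}\sum_u \hat K(u) e(-ux/p)$ for both occurrences of $K$, use that the resulting inner sum in $x$ reduces to $p \cdot \charfun_{u = av + b}$, and obtain the clean identity
$$
S(a,b) \;=\; \sum_{v \in \Fp} \hat K(av+b)\,\overline{\hat K(v)}.
$$

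The second step is to match the right-hand side with $\iota(\wwd(\sheaf{F};\gamma))$. By the definition of the sheaf-theoretic Fourier transform given in the excerpt, $\iota(\frtr{\ft(\sheaf{F})}{\Fp}{y})$ differs from $\hat K(y)$ only by the factor $-\sqrt{p}$. Since $\gamma$ acts on $\Aa^1$ by $y \mapsto ay+b$, the two factors of $-\sqrt{p}$ and the prefactor $1/p$ in the definition of $\wwd(\sheaf{F};\gamma)$ cancel exactly, giving
$$
\iota(\wwd(\sheaf{F};\gamma)) \;=\; \sum_{v} \hat K(av+b)\,\overline{\hat K(v)} \;=\; S(a,b).
$$

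For the third step, I note that $\gamma$ lies in the upper-triangular Borel subgroup $\Bb$, so the hypothesis~\eqref{eq-borel} that $\gamma \notin \hautb_{\sheaf{F}} = \haut_{\sheaf{F}} \cap \Bb$ is equivalent to $\gamma \notin \haut_{\sheaf{F}}$. The preceding proposition (which itself depends on Deligne's Riemann Hypothesis and on the conductor bounds for Fourier transforms established in \cite{FKM}) then yields $|\iota(\wwd(\sheaf{F};\gamma))| \leq M\sqrt{p}$ for some $M$ depending polynomially on $\cond(\sheaf{F})$, and this is exactly the claim.

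There is no real obstacle: the content of the statement is entirely contained in the previous proposition, and the only thing to verify is the bookkeeping of normalizations between the unitary Fourier transform on $\Fp$ and the sheaf-theoretic Fourier transform. The mild subtlety worth flagging is the role played by the upper-triangular shape of $\gamma$: had we allowed a general $\gamma \in \PGL_2$, the same Plancherel identity would involve a Möbius transformation of $v$ rather than the affine map $v \mapsto av+b$, which is why the natural exceptional set for this type of bilinear character sum is precisely $\hautb_{\sheaf{F}}$ rather than the full $\haut_{\sheaf{F}}$.
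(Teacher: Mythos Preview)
Your proof is correct and follows essentially the same route as the paper: the paper's argument is simply ``by means of the Plancherel formula for the finite-field Fourier transform, we check easily that $\sum_{x\in\Fp}K(x)\overline{K(ax)}e(bx/p)=\iota(\wwd(\sheaf{F};\gamma))$ for the upper-triangular $\gamma$, hence the previous proposition gives the result.'' You have spelled out that Plancherel computation in full and tracked the normalizations carefully, which is more than the paper does, but the logical structure is identical.
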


\begin{proof}
By means of the Plancherel formula for the finite-field Fourier
transform, we check easily that
$$
\sum_{x\in\Fp}{K(x)\overline{K(ax)}e\Bigl(\frac{bx}{p}\Bigr)}
=\iota(\wwd(\sheaf{F};\gamma))
$$
where $\gamma$ is the upper-triangular matrix
in~(\ref{eq-borel}). Hence the proposition gives the result.
\end{proof}

It follows now that Proposition~\ref{correlationprop} is a consequence
of the next theorem:

\begin{theorem}\label{th-bound-bad}
  Let $p$ be a prime and let $\sheaf{F}$ be an isotypic sheaf. At least one of the following four properties holds:
\par
\emph{(1)} The trace function of $\sheaf{F}$ is proportional to a
delta function at some point $a\in\Fp$, or to the trace function of a
sheaf $\sheaf{L}_{\psi(aX)}$ for some $a\in\Fp$, i.e., to an additive
character;
\par
\emph{(2)} The group $\hautb_{\sheaf{F}}$ has dimension $\geq 1$ and
$\sheaf{F}$ is $p$-exceptional, i.e., its unique geometrically
irreducible component is a tensor product
$\sheaf{L}_{\chi}\otimes\sheaf{L}_{\eta}$ for some non-trivial Kummer
sheaf $\sheaf{L}_{\chi}$ and some possibly trivial additive character
$\eta$;
\par
\emph{(3)} The group $\hautb_{\sheaf{F}}$ is finite and
$$
|\hautb_{\sheaf{F}}(\Fp)|\leq 10\cond(\sheaf{F})^2;
$$
\par
\emph{(4)} The conductor of $\sheaf{F}$ is at least $(p/10)^{1/2}$.
\end{theorem}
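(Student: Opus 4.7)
The plan is to analyze $\hautb_{\sheaf{F}}$ as a closed algebraic subgroup of the upper-triangular Borel $\mathbf{B}\subset\PGL_2$, and to translate its algebraic structure into constraints on $\sheaf{G}:=\ft(\sheaf{F})$ via the defining condition $\gamma^{*}\sheaf{G}\simeq\sheaf{G}$. The proof naturally splits on the dimension of $\hautb_{\sheaf{F}}$.

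If $\dim\hautb_{\sheaf{F}}\geq 1$, its identity component contains a one-dimensional connected subgroup of $\mathbf{B}$, which up to $\mathbf{B}$-conjugation is either the unipotent radical $\mathbf{U}$ or the diagonal torus $\diag$. In the unipotent case, $\sheaf{G}$ is geometrically invariant under every translation $y\mapsto y+b$; since $\sheaf{G}$ is isotypic and the only irreducible middle-extension sheaves on $\Aa^{1}_{\ov{\Ff}_p}$ invariant (up to isomorphism) under all translations are the Artin--Schreier sheaves $\sheaf{L}_{\psi(aY)}$ (by a standard Schur-type argument on the monodromy), the irreducible component of $\sheaf{G}$ is $\sheaf{L}_{\psi(aY)}$ for some $a$, and Fourier inversion then places $\sheaf{F}$ geometrically at the single point $a$, giving case (1). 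In the torus case, conjugating by a translation (which stays inside $\mathbf{B}$) reduces to the torus $\diag$, so $\sheaf{G}$ is dilation-invariant on $\Gm$; by the same kind of argument the irreducible component of $\sheaf{G}$ is a Kummer sheaf $\sheaf{L}_{\chi}$. Unwinding the conjugation and using the Fourier intertwiners (translation on the $\ft$-side corresponds to tensoring by $\sheaf{L}_{\psi(cX)}$ on the $\sheaf{F}$-side, and $\ft^{-1}(\sheaf{L}_\chi)$ is, up to a Gauss sum, $\sheaf{L}_{\bar\chi}$), one finds $\sheaf{F}\simeq\sheaf{L}_{\bar\chi}\otimes\sheaf{L}_{\psi(cX)}$ geometrically: case (2) if $\chi\neq 1$, and otherwise already case (1).

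Now suppose $\hautb_{\sheaf{F}}$ is finite. Set $S:=\mathrm{Sing}(\sheaf{G})\cap\Aa^{1}(\ov{\Ff}_p)$, so $|S|\leq\cond(\sheaf{G})\leq c_{0}\cond(\sheaf{F})$ by Laumon's bound on the conductor of the Fourier transform. Every element of $\hautb_{\sheaf{F}}$ acts by an affine map on $\Aa^{1}$ preserving $S$ setwise. If $|S|\geq 2$, this action is faithful (an affine map is determined by its values at two distinct points), so
\[
|\hautb_{\sheaf{F}}(\Fp)|\leq |S|(|S|-1)\leq c_{0}^{2}\cond(\sheaf{F})^{2}\leq 10\cond(\sheaf{F})^{2}
\]
for a suitable choice of constants in Laumon's bound, which is case (3). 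If $|S|=1$, say $S=\{c\}$, every element of $\hautb_{\sheaf{F}}$ fixes $c$, so $\hautb_{\sheaf{F}}(\Fp)$ lies in the torus at $c$ and is cyclic of some order $n\mid p-1$; descent along the associated cyclic $n$-th power cover $\Gm\to\Gm$ either exhibits $\sheaf{G}$ as a pulled-back Kummer sheaf (contradicting the finiteness assumption, or equivalently forcing case (2)), or forces $\sheaf{G}$ to acquire Swan conductor at least $n$ at $0$ or $\infty$, yielding $n\leq c_{0}\cond(\sheaf{F})$. The analogous local analysis at $\infty$ treats $|S|=0$. Finally, if any of the resulting bounds ever exceeds $10\cond(\sheaf{F})^{2}$, the trivial bound $|\hautb_{\sheaf{F}}(\Fp)|\leq|\mathbf{B}(\Fp)|=p(p-1)$ forces $10\cond(\sheaf{F})^{2}<p$, i.e.\ case (4).

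The main technical obstacle is the $|S|\leq 1$ branch of the finite case, where the elementary combinatorics of permutations of the singularity set is no longer available and one must instead carry out a descent/pullback along a cyclic cover of $\Gm$ and carefully track how Swan conductors transform, so as to rule out large finite cyclic subgroups of the torus preserving $\sheaf{G}$ without $\sheaf{F}$ being exceptional. Pinning down the explicit constant $10$ (and hence the threshold $(p/10)^{1/2}$ in case (4)) then reduces to making the polynomial dependence of $\cond(\ft(\sheaf{F}))$ on $\cond(\sheaf{F})$ coming from Laumon's stationary phase completely explicit.
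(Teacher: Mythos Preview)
Your overall architecture is close to the paper's, but there is a genuine quantitative gap that prevents your argument from proving case~(3) as stated.

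The conductor bound for the Fourier transform coming from Laumon's stationary phase is \emph{quadratic}, not linear: the relevant result (\cite[Prop.~8.2(1)]{FKM}, used verbatim in the paper) gives $\cond(\sheaf{G})\leq 10\,\cond(\sheaf{F})^{2}$, not $c_0\,\cond(\sheaf{F})$. With the correct bound, your $|S|\geq 2$ argument via the faithful affine action only yields
\[
|\hautb_{\sheaf{F}}(\Fp)|\leq |S|(|S|-1)\leq \cond(\sheaf{G})^{2}\leq 100\,\cond(\sheaf{F})^{4},
\]
which is the wrong exponent and does not give case~(3). The paper avoids this loss by a different route: it first disposes of any nontrivial unipotent $\Fp$-point of $B=\hautb_{\sheaf{F}}(\Fp)$ (quoting~\cite[\S 9]{FKM}, which shows that a \emph{single} translation preserving $\sheaf{G}$ forces either $\cond(\sheaf{G})\geq p$, i.e.\ case~(4), or $\sheaf{G}$ to be Artin--Schreier, i.e.\ case~(1)). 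Once $B$ has no unipotents, it lies in a conjugate torus, hence is cyclic of order prime to $p$ and acts \emph{freely} on $\Gg_m$ after translating the fixed point to $0$. Then any nonzero singularity already has an orbit of size $|B|$, giving the linear bound $|B|\leq |S|\leq \cond(\sheaf{G})\leq 10\,\cond(\sheaf{F})^{2}$; and when $\sheaf{G}_0$ is lisse on $\Gg_m$, the Swan-conductor lemma (Lemma~\ref{lm-mult-invariant}) gives $|B|\leq \swan_\infty(\sheaf{G}_0)\leq \cond(\sheaf{G})$. Your permutation bound $|S|(|S|-1)$ cannot be sharpened to $|S|$ without this cyclic/free reduction.

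Relatedly, your dichotomy on $\dim\hautb_{\sheaf{F}}$ does not fully cover unipotents: a finite algebraic subgroup of $\mathbf{B}$ can still contain a unipotent $\Fp$-point (generating a group of order $p$). Your $|S|=1$ branch correctly forces $B$ into a torus, but the $|S|=0$ branch (``analogous local analysis at $\infty$'') is exactly where a single translation-invariance must be analysed, and this is not elementary---it is the content of the cited argument from~\cite{FKM}. In short: the missing idea is to kill unipotents first, which upgrades your quadratic-in-$|S|$ bound to a linear one and recovers the stated $10\,\cond(\sheaf{F})^{2}$.
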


To prove this, we first prove two basic properties of the
Fourier-M\"obius group and one lemma concerning Swan conductors.

\begin{proposition}\label{pr-algebraic}
  Let $k$ be a finite field, and let $\sheaf{F}$ be an $\ell$-adic
  isotypic trace sheaf on $\Aa^1_k$. Let $\sheaf{G}$ be its Fourier
  transform. Then the subgroup $\haut_{\sheaf{F}}\subset
  \PGL_2(\bar{k})$ is an algebraic subgroup defined over $k$.
\par
In particular, for $\sheaf{F}$ over $\Fp$, $\hautb_{\sheaf{F}}$ is an
algebraic subgroup of $\Bb$ defined over $\Fp$.
\end{proposition}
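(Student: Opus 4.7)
Write $\sheaf{G} = \ft(\sheaf{F})$, extended by middle extension to a sheaf on $\mathbb{P}^1_k$; by the result already invoked from \cite[8.4.1]{GKM}, $\sheaf{G}$ is again geometrically isotypic. The plan is to realize $\haut_{\sheaf{F}}(\bar k)$ as a constructible, Galois-stable subset of $\PGL_2(\bar k)$. Since any constructible subgroup of an algebraic group over an algebraically closed field is automatically closed (a finite union of locally closed sets which is closed under multiplication coincides with its Zariski closure), and since a Galois-stable closed subscheme descends to $k$, this will yield the proposition. The statement about $\hautb_{\sheaf{F}}$ then follows by intersecting with the Borel subgroup $\Bb \subset \PGL_2$, which is defined over $\Fp$.

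To implement this, introduce the two morphisms $m,p\colon \PGL_2 \times \mathbb{P}^1 \to \mathbb{P}^1$, where $m(\gamma,x) = \gamma\cdot x$ is the action and $p$ the second projection, both defined over $k$. Form the sheaf
\[
\mathcal{H} \;=\; \mathcal{H}om\bigl(m^*\sheaf{G},\, p^*\sheaf{G}\bigr)
\]
on $\PGL_2 \times \mathbb{P}^1$, and let $\pi\colon \PGL_2 \times \mathbb{P}^1 \to \PGL_2$ be the first projection, which is proper since $\mathbb{P}^1$ is proper. By Grothendieck's constructibility theorem for proper direct images of $\ell$-adic sheaves, $R^0\pi_*\mathcal{H}$ is a constructible sheaf on $\PGL_2$.

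Next, by proper base change, the stalk of $R^0\pi_*\mathcal{H}$ at a geometric point $\gamma \in \PGL_2(\bar k)$ is
\[
\Hom_{\mathbb{P}^1_{\bar k}}(\gamma^*\sheaf{G},\, \sheaf{G}).
\]
Since $\sheaf{G}$ is geometrically isotypic with unique irreducible component $\sheaf{G}_0$, appearing with some multiplicity $n$, Schur's lemma gives that this Hom space is zero when $\gamma^*\sheaf{G}_0 \not\simeq \sheaf{G}_0$, and has dimension $n^2$ otherwise. Consequently,
\[
\haut_{\sheaf{F}}(\bar k) \;=\; \bigl\{\gamma \in \PGL_2(\bar k) : \dim (R^0\pi_*\mathcal{H})_{\gamma} \geq 1 \bigr\},
\]
which is a constructible subset of $\PGL_2$ by constructibility of $R^0\pi_*\mathcal{H}$.

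Finally, Galois-stability is automatic: if $\gamma^*\sheaf{G} \simeq \sheaf{G}$ over $\bar k$, then applying any $\sigma \in \Gal(\bar k/k)$ and using that $\sheaf{G}$ is defined over $k$ gives $(\gamma^\sigma)^*\sheaf{G} \simeq \sheaf{G}$, so $\haut_{\sheaf{F}}(\bar k)$ is stable under $\Gal(\bar k/k)$. Combined with the closedness obtained above, this yields a closed subgroup of $\PGL_2$ defined over $k$. The main technical point to verify carefully is the stalk computation: one needs to check that pulling back the middle extension $\sheaf{G}$ by the automorphism $\gamma$ of $\mathbb{P}^1$ commutes with the formation of internal $\mathcal{H}om$ up to loci of positive codimension, so that the $R^0\pi_*$ really computes the Hom of middle extensions on fibers; this reduces to the behavior of middle extensions under automorphisms of the base and the finiteness of singular loci, and is the only step that requires some attention beyond formal six-functor formalism.
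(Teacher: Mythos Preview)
Your overall strategy---build a constructible sheaf on $\PGL_2$ whose support is $\haut_{\sheaf{F}}$, use constructibility of direct images, then invoke the fact that a constructible subgroup is closed, and finish with Galois descent---is exactly the one in the paper, which credits Pink for it. The differences are in implementation, and one of them matters.

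The paper forms $\sheaf{E}=\mu^*\sheaf{G}\otimes p_2^*\dual(\sheaf{G})$ and takes $R^2p_{1,!}$, working on $G\times U$ where $G$ is the stabilizer of the singular set $S$ of $\sheaf{G}$ and $U=\Pp^1\setminus S$. The restriction to $G$ ensures that the action map lands in $U$, so that all sheaves involved are lisse on $G\times U$; then tensor product commutes with base change to a fiber, and the stalk at $\gamma$ is $H^2_c(U,\gamma^*\sheaf{G}\otimes\dual(\sheaf{G}))$, which by the coinvariant formula is nonzero exactly when $\gamma\in\haut_{\sheaf{F}}$.

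Your choice of $R^0\pi_*\mathcal{H}om(m^*\sheaf{G},p^*\sheaf{G})$ on $\PGL_2\times\Pp^1$ runs into the point you yourself flag: pullback along the fiber inclusion $i_\gamma$ does \emph{not} in general commute with the internal $\mathcal{H}om$, so proper base change gives $H^0(\Pp^1,i_\gamma^*\mathcal{H}om(m^*\sheaf{G},p^*\sheaf{G}))$, not $\Hom_{\Pp^1}(\gamma^*\sheaf{G},\sheaf{G})$. These can genuinely differ at the singular points, and it is not clear without further argument that the nonvanishing loci coincide. (Your proposed remedy of ``up to loci of positive codimension'' is not quite enough: one needs to control global sections, and punctual contributions to $H^0$ can interfere.) The paper's use of tensor-with-dual rather than $\mathcal{H}om$ is precisely what makes the base change step clean, and the restriction to the lisse locus is what makes the cohomological identification transparent via coinvariants.

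A simple fix in your framework, closer to the paper, is to replace $\mathcal{H}$ by $m^*\sheaf{G}\otimes p^*\dual(\sheaf{G})$ and take $R^2\pi_*$ (since $\pi$ is proper, $\pi_*=\pi_!$). Then base change yields $H^2(\Pp^1,\gamma^*\sheaf{G}\otimes\dual(\sheaf{G}))$; the excision sequence for the finite singular set shows this equals $H^2_c$ of the common lisse locus, which detects $\haut_{\sheaf{F}}$ as required. With this change, your argument is complete and in fact avoids the auxiliary restriction to the stabilizer $G$.
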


We thank R. Pink for explaining to us how to prove this proposition.

\begin{proof}
  Let $S\subset \Pp^1$ be the divisor of singularities of $\sheaf{G}$,
  so that $U=\Pp^1-S$ is the largest open set on which it is
  lisse. Because $\sheaf{G}$ is non-constant (the sheaf $\sheaf{F}$
  would have to be a Dirac delta sheaf supported on a single point for
  this to happen, and such a sheaf is not a Fourier sheaf), we have
  $S\not=\emptyset$. Let $G\subset \PGL_2$ be the stabilizer of $S$,
  which is a proper algebraic subgroup of $\PGL_2$ defined over
  $\Fp$. Then we have a first inclusion $\haut_{\sheaf{F}}\subset G$.
\par
Now we work over $\bar{k}$, and just denote by $U$ its base-change to
$\bar{k}$. We consider the action morphism
$$
\mu\,:\,\begin{cases}
G\times U\lra U\\
(\gamma,x)\mapsto \gamma\cdot x
\end{cases}
$$
and the second projection $p_2\,:\, G\times U\lra U$, and we define
the sheaf
$$
\sheaf{E}=\mu^*\sheaf{G}\otimes p_2^*\dual(\sheaf{G})
$$
on $G\times U$ and the higher direct-image
$\sheaf{I}=R^2p_{1,!}\sheaf{E}$, which is a sheaf on the algebraic
group $G/\bar{k}$. By the base-change theorem for higher-direct images
with compact support~\cite[Arcata, IV, Th. 5.4]{deligne}, the stalk of
$\sheaf{I}$ at a geometric point $\gamma\in G(\bar{k})$ is naturally
isomorphic to $H^2_c(U,\gamma^*\sheaf{G}\otimes\dual(\sheaf{G}))$.
\par
Furthermore, the constructibility theorem for higher direct images
with compact support~\cite[Arcata, IV, Th. 6.2]{deligne} shows that
$\sheaf{I}$ is a constructible $\ell$-adic sheaf on $G$. This implies
(see also~\cite[Rapport, Prop. 2.5]{deligne}) that for any $d\geq 0$,
the set
$$
\{\gamma\in G(\bar{k})\,\mid\, \dim \sheaf{I}_{\gamma}=\dim H^2_c(U,
\gamma^*\sheaf{G}\otimes\dual(\sheaf{G}))=d\}
$$
is constructible in $G(\bar{k})$, i.e., is a finite union of
locally-closed subsets. In particular, the set of all $\gamma$ where
$$
H^2_c(U,\gamma^*\sheaf{G}\otimes\dual(\sheaf{G}))\not=0,
$$
is constructible. But this set is exactly $\haut_{\sheaf{F}}$ by the
co-invariant formula for $H^2_c$ on a curve
(see~\cite[Th. 9.1]{FKM}). Since it is well-known that a constructible
subgroup of an algebraic group is Zariski-closed (see,
e.g.,~\cite[Ch. I, Prop. 1.3]{borel}) we conclude therefore that
$\haut_{\sheaf{F}}$ is a closed subgroup of $\PGL_2$.
\par
Finally, $\haut_{\sheaf{F}}$ is defined over $k$: since $\sheaf{F}$ is
invariant under the Frobenius automorphism of $k$, the definition
implies that if $\gamma\in \haut_{\sheaf{F}}$, then so does the image
of $\gamma$ under the Frobenius automorphism.
\end{proof}


Next we need to understand when $\haut_{\sheaf{F}}$ can be
``large''. We prove here a bit more than what we need for the sake of
completeness. We use the notation $\rmT^{x,y}$ for the maximal torus
in $\PGL_2$ defined as the pointwise stabilizer of $\{x,y\}\subset
\Pp^1$ (for $x\not=y$) and $\rmU^x$ for the unipotent radical of the
Borel subgroup $\Bb^x$ which is the stabilizer of $x\in \Pp^1$.

\begin{proposition}\label{pr-big-fourier}
  Let $\sheaf{F}$ be a geometrically isotypic $\ell$-adic Fourier
  sheaf on $\Aa^1_{\Ff_p}$, with Fourier transform
  $\sheaf{G}=\ft_{\psi}(\sheaf{F})$ with respect to some non-trivial
  additive character $\psi$.
\par
\emph{(1)} If there exists $x\in \Pp^1$ such that
$\haut_{\sheaf{F}}\supset \rmU^x$, then $\sheaf{G}$ is geometrically
isomorphic to a direct sum of copies of
$\sheaf{L}_{\psi_0(\gamma_0(X))}$ for some non-trivial additive
character $\psi_0$, where $\gamma_0\in \PGL_2$ is such that
$\gamma_0\cdot x=\infty$. In that case, we have
$\haut_{\sheaf{F}}=\rmU^x$.
\par
\emph{(2)} If there exist $x\not=y$ in $\Pp^1$ such that
$\haut_{\sheaf{F}}\supset \rmT^{x,y}$, then $\sheaf{G}$ is
geometrically isomorphic to a direct sum of copies of
$\sheaf{L}_{\chi_0(\gamma_0(X))}$ for some non-trivial multiplicative
character $\chi_0$, where $\gamma_0\in \PGL_2$ is such that
$\gamma_0\cdot x=0$, $\gamma_0\cdot y=\infty$. In that case, we have
$\haut_{\sheaf{F}}=\rmT^{x,y}$ if $\chi_0$ is not of order $2$, and
$\haut_{\sheaf{F}}=\rmN^{x,y}$, the normalizer of $\rmT^{x,y}$, if
$\chi_0^2=1$.
\end{proposition}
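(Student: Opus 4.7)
The plan is to normalize each statement via a Möbius transformation, then translate the invariance hypothesis into structural constraints on $\sheaf{G}$ through a local monodromy analysis, and finally compute $\haut_{\sheaf{F}}$ exactly.

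First, I would use the functoriality of $\haut_{\sheaf{F}}$ under pull-back to reduce to a standard position. In case (1), choose $\gamma_0 \in \PGL_2$ with $\gamma_0 \cdot x = \infty$; replacing $\sheaf{G}$ by $(\gamma_0^{-1})^* \sheaf{G}$, we may assume $x = \infty$, so that $\rmU^x = \rmU^\infty$ is the additive group of translations $\tau_a \colon z \mapsto z+a$. Similarly in case (2), I reduce to $x = 0$, $y = \infty$, so that $\rmT^{x,y}$ is the multiplicative group of scalings $\sigma_t \colon z \mapsto tz$, $t \in \bar\Ff_p^\times$. The hypothesis then becomes: $\tau_a^*\sheaf{G} \simeq \sheaf{G}$ geometrically for all $a \in \bar\Ff_p$ (respectively $\sigma_t^* \sheaf{G} \simeq \sheaf{G}$ for all $t \in \bar\Ff_p^\times$).

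Next, I would show that $\sheaf{G}$ is lisse on $\Aa^1$ in case (1), and on $\Gm$ in case (2). The singular set $S \subset \Pp^1(\bar\Ff_p)$ of $\sheaf{G}$ is finite and stable under $\haut_{\sheaf{F}}$; in case (1) any element of $S \cap \Aa^1$ has infinite $\rmU^\infty(\bar\Ff_p)$-orbit, forcing $S \subset \{\infty\}$, and in case (2) any element of $S \cap \Gm(\bar\Ff_p)$ has infinite $\rmT^{0,\infty}(\bar\Ff_p)$-orbit, forcing $S \subset \{0,\infty\}$. Then, on the geometrically irreducible component $\sheaf{G}_0$, I would examine the local monodromy representation at the remaining points. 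In case (2), $\sigma_t$ rescales the uniformizers at $0$ and at $\infty$, and therefore acts on any wild character of positive slope with infinite $\bar\Ff_p^\times$-orbit. Since the break decomposition at each point is finite, $\sheaf{G}_0$ must be tame at both $0$ and $\infty$, and the tame classification on $\Gm$ gives $\sheaf{G}_0 \simeq \sheaf{L}_{\chi_0}$ for some multiplicative character $\chi_0$ (non-trivial, else $\sheaf{F}$ would be punctual at $0$, contradicting the Fourier sheaf hypothesis). In case (1), translations act on the uniformizer $u = 1/z$ at $\infty$ by $\tau_a^*(u) = u/(1+au)$, which is the identity modulo $u^2$; this acts trivially on tame characters, forces higher-slope breaks to have finite orbits which is impossible, and on the slope-$1$ part the Artin--Schreier classification identifies the only possibilities, yielding $\sheaf{G}_0 \simeq \sheaf{L}_{\psi_0}$ for a non-trivial $\psi_0$.

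Finally, I would pin down $\haut_{\sheaf{F}}$ exactly. Since $\sheaf{G}$ is geometrically a direct sum of copies of a rank-one sheaf $\sheaf{H}$ (equal to $\sheaf{L}_{\psi_0}$ or $\sheaf{L}_{\chi_0}$ after the reduction), one has $\gamma \in \haut_{\sheaf{F}}$ iff $\gamma^*\sheaf{H} \simeq \sheaf{H}$. A direct computation then gives the stated equalities: for $\sheaf{L}_{\psi_0}$, pull-back by $\tau_a$ twists by the constant $\psi_0(a)$ and hence is a geometric isomorphism, while any element of $\Bb^\infty$ outside $\rmU^\infty$ is a non-trivial dilation that sends $\sheaf{L}_{\psi_0(X)}$ to $\sheaf{L}_{\psi_0(tX)}$, a non-isomorphic Artin--Schreier sheaf when $t \neq 1$; for $\sheaf{L}_{\chi_0}$, the scalings preserve it and the Weyl-type involution $z \mapsto c/z$ sends it to $\sheaf{L}_{\chi_0^{-1}}$, which is geometrically isomorphic to $\sheaf{L}_{\chi_0}$ precisely when $\chi_0^2 = 1$. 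The main obstacle is the monodromy step: the slope-$1$ analysis at $\infty$ in case (1) is the most delicate part, since it requires combining the action of $\tau_a$ with the classification of wild characters of slope $1$ via Artin--Schreier theory; the geometric reductions and the final identification of $\haut_{\sheaf{F}}$ are essentially formal once $\sheaf{G}_0$ has been identified as $\sheaf{L}_{\psi_0}$ or $\sheaf{L}_{\chi_0}$.
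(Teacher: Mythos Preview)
Your approach differs from the paper's. The paper invokes \cite[Lemma~2.6.13]{katz-rls} as a black box: that lemma directly classifies geometrically irreducible sheaves invariant under the full group of translations (respectively dilations) as Artin--Schreier (respectively Kummer) sheaves, and the remainder of the proof is then a short verification. You instead try to reprove this classification by hand via slope and break-decomposition analysis.

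Your orbit argument on the singular locus is fine, and your treatment of case~(2) is essentially sound: scaling by $t$ genuinely moves a positive-slope character at $0$ or $\infty$ through an infinite family as $t$ ranges over $\bar\Ff_p^\times$, so wildness is excluded and the tame classification on $\Gm$ finishes. In case~(1), however, your handling of the local monodromy at $\infty$ is incomplete. The claim that higher-slope breaks are ``impossible'' under full translation invariance is easy to check in rank~$1$ (for $\sheaf{L}_{\psi(X^d)}$ with $1<d<p$, translation by $a\neq 0$ produces a nontrivial lower-order twist), but for an irreducible $\sheaf{G}_0$ of rank $>1$ you have not explained why the inertial representation at $\infty$ cannot carry breaks of slope $>1$: that representation need not split into rank-$1$ characters over $\Ff_p$, and your finite-orbit argument has to be carried out on a suitable refinement after base change. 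This is exactly the substance of Katz's lemma, so you are relocating the difficulty rather than bypassing it.

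One smaller point: in your final determination of $\haut_{\sheaf{F}}$ in case~(1) you only exclude elements of $\Bb^\infty\setminus\rmU^\infty$. You should also observe that any $\gamma\notin\Bb^\infty$ sends $\infty$ to a finite point, so $\gamma^*\sheaf{L}_{\psi_0}$ acquires a finite singularity and cannot be isomorphic to $\sheaf{L}_{\psi_0}$; the analogous singularity remark is what confines $\haut_{\sheaf{F}}$ to $\rmN^{0,\infty}$ in case~(2) before you test the Weyl involution.
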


\begin{proof}
  (1) The ``if'' direction is immediate. For the converse, we may
  first assume that $x=\infty$, by conjugation with a matrix
  $\gamma_0$ with $\gamma_0\cdot x=\infty$. The assumption is then
  that
$$
\begin{pmatrix}1&t\\
0&1\end{pmatrix}^*\sheaf{G}\simeq \sheaf{G},
$$
for any $t\in\bar{\Ff}_p$, where the symbol $\simeq$ denotes geometric
isomorphism. Since $\sheaf{G}$ is geometrically isotypic, we also have
$$
\begin{pmatrix}1&t\\
0&1\end{pmatrix}^*\sheaf{G}_1\simeq \sheaf{G}_1,
$$
for $t\in \bar{\Ff}_p$, where $\sheaf{G}_1$ is the geometrically
irreducible component of $\sheaf{G}$.  We can then apply~\cite[Lemma
2.6.13]{katz-rls} to deduce that
$$
\sheaf{G}_1\simeq\sheaf{L}_{\psi_0(X)}
$$
(geometrically) for some additive $\ell$-adic character $\psi_0$, and
hence $\sheaf{G}$ is a direct sum of copies of this Artin-Schreier
sheaf. Furthermore, it follows from the classification of
Artin-Schreier sheaves that if $\psi_0$ is non-trivial and
$\gamma\notin \rmU^{\infty}$, we do not have
$\gamma^*\sheaf{L}_{\psi_0(X)}\simeq \sheaf{L}_{\psi_0(X)}$, and
therefore the Fourier-M\"obius group is exactly equal to
$\rmU^{\infty}$.
\par
(2) As before, we may first conjugate using some $\gamma_0$ to reduce
to the case where $x=0$, $y=\infty$, and we may reduce to the case
where $\sheaf{F}$ and $\sheaf{G}$ are geometrically irreducible, so
that the assumption is
$$
\haut_{\sheaf{F}}\supset \rmT=\rmT^{0,\infty}=\Bigl\{\begin{pmatrix}a&0\\0&d
\end{pmatrix}\Bigr\}
$$
for all $a$, $d\in\bar{k}$. By~\cite[Lemma 2.6.13]{katz-rls}, again,
there exists a multiplicative character $\chi_0$ such that
$$
\sheaf{G}\simeq \sheaf{L}_{\chi_0(X)}.
$$
\par
This character is non-trivial since $\sheaf{G}$ is a Fourier
sheaf. Now to finish the computation of $\haut_{\sheaf{F}}$, we use
the fact that $\sheaf{L}_{\chi_0(X)}$ is tamely ramified at $0$ and
$\infty$, and hence
$$
\haut_{\sheaf{F}}\subset \rmN=\rmN^{0,\infty}=\rmT\cup \Bigl\{
\begin{pmatrix}
0&b\\c&0
\end{pmatrix}
\Bigr\},
$$
the normalizer of $\rmT$ in $\PGL_2$. Clearly, $\rmT\subset
\haut_{\sheaf{L}_{\chi_0(X)}}$. If $\gamma\in \rmN-\rmT$, on the other
hand, we have $\gamma^*\sheaf{F}_{\chi_0(X)}\simeq
\sheaf{F}_{\chi_0(X^{-1})}$, and by the classification of Kummer
sheaves, it follows that $\gamma\in \haut_{\sheaf{L}_{\chi_0(X)}}$ if
and only if $\chi_0=\chi_0^{-1}$, i.e., if $\chi_0$ is of order $2$.
\end{proof}

The second lemma concerns the size of Swan conductors of lisse sheaves
on $\Gg_m$ with some non-trivial (multiplicative)
translation-invariance property.

\begin{lemma}\label{lm-mult-invariant}
  Let $k$ be an algebraic closure of a finite field of characteristic
  $p$, and let $\sheaf{F}$ be an $\ell$-adic sheaf for some
  $\ell\not=p$ which is lisse on $\Gg_{m,k}$.  If there exists
  $a\not=1$ in $\Gg_m(k)$ such that $\sheaf{F}\simeq [\times
  a]^*\sheaf{F}$, then $m\mid \swan_{\infty}(\sheaf{F})$, where $m$ is
  the multiplicative order of $a$. In particular, if $\sheaf{F}$ is
  not tame at $\infty$, we have $\swan_{\infty}(\sheaf{F})\geq m$.
\end{lemma}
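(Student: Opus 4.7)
The plan is to descend $\sheaf{F}$ along the tame Galois covering
$\pi\colon \Gg_m\to\Gg_m$, $x\mapsto x^m$, and then invoke the standard
multiplicativity of Swan conductors under tame pullback. Since $a\in k^\times$
has order $m$ and $\mathrm{char}(k)=p$, I have $(m,p)=1$, so $\pi$ is a connected
finite \'etale Galois cover of $\Gg_m$ with Galois group $\mu_m$, totally tamely
ramified at $\infty$ with ramification index $m$; the automorphism $[\times a]$
generates the corresponding deck transformations.

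Next I would reduce to the case where $\sheaf{F}$ is geometrically irreducible.
Replacing $\sheaf{F}$ by its semisimplification (which does not alter
$\swan_\infty$), I write $\sheaf{F}\simeq\bigoplus_i V_i^{\oplus n_i}$ as a sum of
pairwise non-isomorphic irreducibles. The isomorphism $[\times a]^*\sheaf{F}\simeq\sheaf{F}$
forces $[\times a]^*$ to permute the set $\{V_i\}$ up to isomorphism, preserving
multiplicities. If a $\langle a\rangle$-orbit has length $k\mid m$ with
representative $V$, then $[\times a^k]^*V\simeq V$ where $a^k$ has order $m/k$,
and all $k$ members of the orbit have the same Swan conductor at $\infty$
(since $[\times a]$ fixes $\infty$). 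The orbit contributes
$k\cdot n\cdot\swan_\infty(V)$ to $\swan_\infty(\sheaf{F})$, and by induction on
$m$ (the geometrically irreducible case applied to $V$ and $a^k$) we get
$(m/k)\mid \swan_\infty(V)$, so the contribution is divisible by $m$.

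It therefore suffices to treat $\sheaf{F}$ geometrically irreducible. Choose
$\phi\colon[\times a]^*\sheaf{F}\xrightarrow{\sim}\sheaf{F}$. By Schur's lemma,
the iterated composition
$\phi\circ[\times a]^*\phi\circ\cdots\circ[\times a^{m-1}]^*\phi$, viewed as an
endomorphism of $\sheaf{F}$, is multiplication by a scalar $\lambda$; rescaling
$\phi$ by an $m$-th root of $\lambda^{-1}$, I may assume this composition equals
the identity. This supplies a genuine $\mu_m$-equivariant structure on $\sheaf{F}$
for the Galois action on the source of $\pi$, and standard descent along the
\'etale $\mu_m$-torsor $\pi$ provides a lisse $\bar{\Qq}_\ell$-sheaf $\sheaf{G}$
on $\Gg_m$ with $\pi^*\sheaf{G}\simeq \sheaf{F}$.

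Finally, the standard formula for the behavior of the Swan conductor under a
finite map that is tame of ramification index $e$ (see \cite{GKM}) gives
$$
\swan_\infty(\sheaf{F})=\swan_\infty(\pi^*\sheaf{G})=m\cdot\swan_\infty(\sheaf{G}),
$$
hence $m\mid\swan_\infty(\sheaf{F})$. If furthermore $\sheaf{F}$ is not tame at
$\infty$, then $\swan_\infty(\sheaf{F})\geq 1$, and divisibility by $m$ forces
$\swan_\infty(\sheaf{F})\geq m$. The main delicate point is the descent step:
promoting the single intertwiner $\phi$ to a coherent $\mu_m$-equivariant
structure. Because $\mu_m$ is cyclic, all higher cocycle conditions collapse to
the single relation that the $m$-fold iterate of $\phi$ is the identity, which
is arranged by Schur's lemma; the rest is routine \'etale descent.
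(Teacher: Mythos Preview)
Your argument is correct, but the route differs from the paper's. The paper works \emph{locally} at $\infty$: it decomposes the generic stalk $V$ as a representation of the inertia group $I(\infty)$ into $I$-isotypic pieces $V_\alpha$, observes that the cyclic group $\langle a\rangle$ permutes the index set, and on each orbit $B$ invokes~\cite[Prop.~4.1.6~(2)]{GKM} (a local statement: an irreducible $I$-representation fixed by $[\times a^{|B|}]$ has Swan conductor divisible by $m/|B|$), then sums over orbits. Your approach is \emph{global}: you decompose by irreducible lisse sheaves on $\Gg_m$ (i.e., irreducible $\pi_1(\Gg_m)$-representations, a coarser decomposition), and for the irreducible case you descend along the tame $\mu_m$-torsor $x\mapsto x^m$ and use the elementary fact that Swan conductors multiply by the ramification index under tame pullback.

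Both reductions follow the same orbit bookkeeping; the real difference is how the irreducible case is handled. Your descent argument is self-contained and arguably more transparent, relying only on Schur's lemma and the tame-pullback formula, whereas the paper's proof is shorter but outsources the irreducible case to Katz's proposition (whose proof, in fact, proceeds by a very similar local descent). One small point: your phrase ``by induction on $m$'' in the reduction step is slightly misleading, since what you actually do is apply the already-established irreducible case to $V$ and $a^k$; no inductive hypothesis on smaller $m$ is needed.
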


\begin{proof}
  Let $V$ be the generic stalk of $\sheaf{F}$, seen as a
  representation of the inertia group $I=I(\infty)$ at $\infty$, and
  let
$$
V=\bigoplus_{\alpha\in A}{V_{\alpha}}
$$
be the decomposition of $V$ in $I$-isotypic subspaces. Let $W_{\alpha}$
denote the irreducible $I$-representation such that $V_{\alpha}$ is a
multiple of $W_{\alpha}$.
\par
The finite cyclic subgroup $G\subset \Gg_m(k)$ of order $m$ generated
by $a$ acts on the index set $A$, corresponding to the fact that
$[\times a]^*V=V$ as $I$-representation: we have
$$
[\times a^j]^*V_{\alpha}=V_{a^j\cdot \alpha},
$$
for any integer $j\geq 0$, and in fact even
$$
[\times a^j]^*W_{\alpha}=W_{a^j\cdot \alpha},
$$
since $W_{\alpha}$ is uniquely determined by $V_{\alpha}$. 
\par
Let $B\subset A$ be one of the orbits of $G$. Its size $|B|$ is a
divisor of $m$, and if $\alpha\in B$, we have an isomorphism $[\times
a^{|B|}]^*W_{\alpha}=W_{\alpha}$. Since $W_{\alpha}$ is irreducible, we can
apply~\cite[Prop. 4.1.6 (2)]{GKM} to deduce that
$$
\swan_{\infty}(W_{\alpha})\equiv 0\mods{m/|B|}.
$$
\par
Since multiplicative translation by $a$ is an automorphism, it follows
that
$$
\swan_{\infty}(W_{a^j\cdot \alpha})=\swan_{\infty}(W_{\alpha})\equiv
0\mods{m/|B|}
$$
for any $a^j\in G$. Summing over the orbit, we get
$$
\swan_{\infty}\Bigl(\bigoplus_{\alpha\in B}{V_{\alpha}}\Bigr)\equiv 0\mods{m},
$$
and then summing over the orbits we get
$$
\swan_{\infty}(V)\equiv 0\mods{m}.
$$
\par
If $\sheaf{F}$ is wild at infinity, than $\swan_{\infty}(V)\not=0$,
and therefore it must be $\geq m$.
\end{proof}

Having dealt with these preliminaries, we can now prove the theorem.

\begin{proof}[Proof of Theorem~\ref{th-bound-bad}]
  The group $B=\hautb_{\sheaf{F}}(\Fp)$ is a finite subgroup of
  $\Bb\cap \PGL_2(\Fp)$. We distinguish three situations in turn.
\par
(1) If $B$ contains a non-trivial unipotent element $g$, then since
$g$ fixes $\infty$, the reasoning in~\cite[\S 9, Proof of
Th. 1.12]{FKM} shows that either $\cond(\sheaf{G})\geq p$, in which
case the fourth case holds by~\cite[Prop. 8.2 (1)]{FKM}, or otherwise
the trace function of the Fourier transform $\ft_{\psi}(\sheaf{F})$ is
proportional to an additive character, so that the trace function of
$\sheaf{F}$ is proportional to a delta function, and we are in the
first case.
\par
Now, if $B$ contains no unipotent elements, the unipotent radical of
$\hautb_{\sheaf{F}}$ must also be trivial (otherwise it would have
non-trivial $\Fp$-points). So, by the structure of $\Bb$, the
connected component of the identity $\hautb_{\sheaf{F}}^{\circ}$ of
$\hautb_{\sheaf{F}}$ is contained in a conjugate (say $\Dd$) of the
diagonal subgroup in $\Bb$. Since $\Dd$ has dimension $1$, there are
two further possibilities:
\par
(2) If $\hautb_{\sheaf{F}}^{\circ}=\Dd$, so that
$\hautb_{\sheaf{F}}\supset \Dd$, we deduce from
Proposition~\ref{pr-big-fourier} (2) that the Fourier transform of
$\sheaf{F}$ is geometrically isomorphic to a direct sum of copies of
$\sheaf{L}_{\chi(\gamma(X))}$ for some multiplicative character $\chi$
and some $\gamma\in \Bb$. By Fourier transform, this implies that
$\sheaf{F}$ is geometrically isomorphic to a direct sum of copies of
the tensor product $\sheaf{L}_{\chi}\otimes\sheaf{L}_{\eta}$ for some
multiplicative character $\chi$ and some additive character
$\eta$. Here $\chi$ must be non-trivial because otherwise $\sheaf{F}$
would not be a Fourier sheaf, and we are in the second case of the
statement of the proposition.
\par
(3) Otherwise, $\hautb_{\sheaf{F}}$ is a finite group so that its
finite subgroup $B\subset \Dd$ is cyclic, and there exists
$x_0\in\Aa^1$ such that all elements of $B$ fix $\infty$ and
$x_0$. Let $\sheaf{G}$ be the Fourier transform of
$\sheaf{F}$. Replacing $\sheaf{G}$ with
$\sheaf{G}_0=[-x_0]^*\sheaf{G}$, which has the same conductor as
$\sheaf{G}$, we can assume that $x_0=0$, and hence that $B$ can be
identified with a finite cyclic subgroup of $\Fpt$ acting on $\Pp^1$
by multiplication.  Let $a\in \Fpt$ be a generator of $B\subset
\Fpt$. There are two subcases:
\par
-- (3.1) If $\sheaf{G}_0$ is not lisse on $\Gg_m$, there is a non-zero
singularity $s\in\Gg_m$ of $\sheaf{G}_0$; the geometric isomorphism
$\sheaf{G}_0\simeq [\times a]^*\sheaf{G}_0$ implies that the orbit of
$s$ under multiplication by powers of $a$ is also contained in the set
$S$ of singularities of $\sheaf{G}_0$. This set contains $\geq |B|$
elements, and therefore
$$
\cond(\sheaf{G})=\cond(\sheaf{G}_0)\geq |S|\geq |B|
$$
in that case, and by~\cite[Prop. 8.2 (1)]{FKM}, we get
$$
|B|\leq \cond(\sheaf{G})\leq 10\cond(\sheaf{F})^2,
$$
i.e., case (3) of the theorem.
\par
-- (3.2) If $\sheaf{G}_0$ is lisse on $\Gg_m$, we first note that
$\sheaf{G}_0$ cannot be tame at both $0$ and $\infty$, since the tame
fundamental group of $\Gg_m$ is abelian and $\sheaf{G}_0$ would then
be a Kummer sheaf, which we excluded by assuming that
$\hautb_{\sheaf{F}}$ is finite (again from
Proposition~\ref{pr-big-fourier}, (2)). Up to applying a further
automorphism $x\mapsto x^{-1}$, we can assume that $\sheaf{G}_0$ is
wildly ramified at $\infty$. We can then apply
Lemma~\ref{lm-mult-invariant} to $\sheaf{G}_0$, and deduce that
$$
\swan_{\infty}(\sheaf{G}_0)\geq |B|,
$$
and hence we get again
$$
\cond(\sheaf{G})=\cond(\sheaf{G}_0)\geq
\swan_{\infty}(\sheaf{G}_0)\geq |B|,
$$
and conclude as before.
\end{proof}

\subsection{Decomposition of characteristic functions}
\label{subsec-decompositions-poly}

In this section, we explain the necessary properties of the trace
weights underlying Corollary~\ref{cor-poly-error-terms}. We recall
especially the decomposition of the characteristic function of the set
of values $P(n)$ of a polynomial $P\in\Fp[X]$ with $n\in\Fp$ in terms
of trace functions. These types of results are well-known, but we give
the full proof since we require some quantitative information
concerning this decomposition.

\begin{proposition}\label{pr-decomp-poly}
  Let $p$ be prime and let $P\in\Fp[X]$ be a non-constant polynomial
  of degree $\deg P<p$. Let $\mathcal{P}$ be the set of values of $P$
  modulo $p$ and let $\mathbf{1}_P$ be its characteristic function.
\par
There exist a finite set $S\subset \Fp$ with order at most $\deg P$,
an integer $k\geq 1$ and a finite number of trace functions $K_i$
associated to middle-extension sheaves $\sheaf{F}_i$, $1\leq i\leq k$,
which are pointwise pure of weight $0$, and algebraic numbers
$c_i\in\bar{\Qq}$, such that
\begin{equation}\label{eq-decomp-charfun}
\sum_{i}c_iK_i(x)=\mathbf{1}_{P}(x)
\end{equation}
for all $x\in \Fp-S$, and with the following properties:
\par
-- The constants $k$, $|c_i|$ and $\cond(\sheaf{F}_i)$ are bounded
in terms of $\deg P$ only;
\par
-- The sheaf $\sheaf{F}_1$ is trivial and none of the $\sheaf{F}_i$
for $i\not=1$ are  geometrically trivial, and furthermore
\begin{equation}\label{eq-size-c1}
c_1=\frac{|\mathcal{P}|}{p}+O(p^{-1/2}),
\end{equation}
where the implicit constant depends only on $\deg P$;
\par
-- If $P$ is squarefree, no $\sheaf{F}_i$, $i\not=1$, contains an
exceptional sheaf as a Jordan-Hölder factor. 
\end{proposition}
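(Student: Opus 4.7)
The plan is to realize $\mathbf{1}_P$ as a polynomial in the counting function $N(x)=|\{n\in\Fp : P(n)=x\}|$, and then to identify each power $N^j$ as a trace function of a tensor power of the pushforward sheaf $\sheaf{F}=P_*\bar\Qq_\ell$. The decomposition of $\sheaf{F}^{\otimes j}$ into geometrically isotypic middle-extension components, combined with Deligne's Riemann Hypothesis, will then yield the announced representation.

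Set $d=\deg P$, and let $S\subset\Fp$ be the set of critical values of $P$, so that $|S|\leq d-1$. Since $N(x)\in\{0,1,\dots,d\}$ for every $x\in\Fp$, Lagrange interpolation yields a polynomial $\tilde f(Y)\in\Qq[Y]$ of degree at most $d$ with $\tilde f(0)=0$ and $\tilde f(m)=1$ on $\{1,\dots,d\}$, whose coefficients $a_j$ are bounded in terms of $d$ only; thus
\[
\mathbf{1}_P(x)=\sum_{j=1}^d a_j N(x)^j,\qquad x\in\Fp.
\]
Assuming $p>d$ (the case $p\leq d$ being handled trivially with skyscraper sheaves), the cover $P$ is tame at every ramification point and at $\infty$, so $\sheaf{F}$ is lisse on $\Aa^1_{\Fp}-S$, pure of weight $0$, and its middle extension to $\Aa^1_{\Fp}$ has conductor bounded in terms of $d$. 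Each tensor power $\sheaf{F}^{\otimes j}$ (for $1\leq j\leq d$) is then a middle-extension sheaf with trace function $N^j$ on $\Fp-S$, pure of weight $0$, of conductor bounded in terms of $d$; by the geometric semisimplicity of pure sheaves it decomposes as a direct sum of geometrically isotypic components of bounded conductor. Grouping all geometrically trivial summands into a single constant term yields the decomposition $\mathbf{1}_P(x)=\sum_i c_i K_i(x)$ on $\Fp-S$, with $\sheaf{F}_1$ trivial and every $\sheaf{F}_i$ for $i\geq 2$ non-geometrically-trivial. Averaging the identity over $\Fp$ gives $|\mathcal{P}|/p+O(1/p)$ on the left, while for $i\geq 2$ Deligne's bound yields $\sum_{x\in\Fp}K_i(x)\ll\cond(\sheaf{F}_i)p^{1/2}$; hence $c_1=|\mathcal{P}|/p+O(p^{-1/2})$, as required in \eqref{eq-size-c1}.

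For the final assertion, assume $P\bmod p$ is squarefree, so $\gcd(P,P')=1$ and no root of $P$ is a critical point; hence $0$ is not a critical value, i.e.\ $0\notin S$, and $\sheaf{F}$ is lisse at $0$. Then every irreducible summand of every $\sheaf{F}^{\otimes j}$ is lisse at $0$, which rules out any non-trivial Kummer sheaf $\sheaf{L}_\chi$ (ramified at $0$) as a summand; and since $\sheaf{F}^{\otimes j}$ is tame at $\infty$, no tensor product $\sheaf{L}_\chi\otimes\sheaf{L}_\psi$ with $\psi$ non-trivial (such sheaves being wildly ramified at $\infty$) can occur either. Thus no exceptional sheaf appears as a Jordan--H\"older factor of any $\sheaf{F}_i$ for $i\geq 2$. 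The main technical subtlety I expect is the uniform (in $p$) control of the conductors of the irreducible constituents of $\sheaf{F}^{\otimes j}$, together with the correct bookkeeping of the branch locus $S$ (at which the trace function of the middle extension may genuinely differ from $N^j$, justifying its absorption into the exceptional set of the statement).
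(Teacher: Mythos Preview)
Your proof is correct and takes a genuinely different route from the paper's. The paper passes to the Galois closure $W\to U$ with group $G$ and subgroup $H=\Gal(W/V)$, observes that for $x\in U(\Fp)$ one has $\mathbf{1}_P(x)=\theta(\frob_{x,p})$ where $\theta$ is the class function on $G$ detecting conjugacy into $H$, and then expands $\theta$ directly in irreducible characters of $G$; each irreducible $\rho$ yields a lisse sheaf $\Lambda_\rho$ on $U$ via $\pi_1(U)\twoheadrightarrow G\xrightarrow{\rho}\GL(V_\rho)$. Your approach instead expresses $\mathbf{1}_P$ as an interpolating polynomial in the permutation character $N(x)=\chi_{G/H}(\frob_{x,p})$, and then decomposes the tensor powers of the pushforward sheaf $P_*\bar\Qq_\ell$. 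At bottom the two are equivalent --- both are decomposing a class function on the same finite group $G$ --- but the paper's organisation is a bit cleaner for the bookkeeping: since the monodromy visibly factors through the finite group $G$ of order prime to $p$, tameness and the conductor bound for each $\Lambda_\rho$ are immediate (rank $\leq |G|^{1/2}$, lisse outside $|S|\leq d$ points, all Swan conductors zero), and the coefficients $c_\rho$ are given by the explicit inner-product formula $|G|^{-1}\sum_g\theta(g)\overline{\chi_\rho(g)}$. Your route gives the same bounds once you note that the geometric monodromy of $P_*\bar\Qq_\ell$ (hence of every tensor power and every summand) also factors through $G$, which is what underlies the ``uniform control of conductors'' you flag at the end; this is not a gap, but it is worth making explicit rather than leaving as a subtlety.
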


\begin{proof}
Let $K(x)$, for $x\in \Fp$, denote the characteristic function of the
set of values $P(y)$ for $y\in \Fp$, so that we are trying to express
$K$ as a linear combination of trace weights.
\par
Let $\tilde{D}\subset \Aa^1$ be the critical points of $P$,
$\tilde{S}=P(\tilde{D})\subset \Aa^1$ the set of critical values, so
that $P$ restricts to a finite \'etale covering
$$
V=\Aa^1-\tilde{D}\lra U=\Aa^1-\tilde{S}
$$
and let
$$
W\fleche{\pi} V\lra U
$$
be the Galois closure of $V$. The Galois group $G=\Gal(W/U)$ contains
the subgroup $H=\Gal(W/V)$, and has order dividing $\deg(P)!$, hence
coprime to $p$.
\par
For any $x\in U(\Fp)$, the Galois group $G$ permutes the points of the
fiber $\pi^{-1}(x)\subset W$, and this Galois action is isomorphic to
the left-translation action on $G/H$. The Frobenius $\frob_{x,p}$ at
$x$, seen as an element of $G$, also permutes the points of the fiber,
and the subset of rational points $\pi^{-1}(x)\cap W(\Fp)$ correspond
bijectively to the fixed points of $\frob_{x,p}$, and hence the number
of fixed points of $\frob_{x,p}$ acting on $G/H$ is equal to the
number of conjugates of $\frob_{x,p}$ that are in $H$.
\par
More generally, if we consider the function
$$
\theta\,:\,
\begin{cases}
G\lra \bar{\Qq}_{\ell}\\
g\mapsto \begin{cases}
1&\text{ if $g$ is conjugate to \emph{some} $h\in H$}\\
0&\text{ otherwise,}
\end{cases}
\end{cases}
$$
the same argument implies that we have
$$
K(x)=\theta(\frob_{x,p})
$$
for all $x\in U(\Fp)$. 
\par
The function $\theta$ is invariant under $G$-conjugation. Hence, by
character theory (since $\ell\not=p$, the $\bar{\Qq}_{\ell}$-linear
representations of $G$ can be identified with the $\Cc$-linear
representations) there exist coefficients $c_{\rho}$ such that
$$
\theta=\sum_{\rho}{c_{\rho}\chi_{\rho}}
$$
where $\rho$ runs over isomorphism classes of irreducible
$\bar{\Qq}_{\ell}$-linear representations 
$$
\rho\,:\, G\lra \GL(V_{\rho})
$$
of $G$ and $\chi_{\rho}=\Tr\rho$ denotes the character of $\rho$. By
composition
$$
\Lambda_{\rho}\,:\, \pi_1(U)\lra \pi_1(U)/\pi_1(W)\simeq
G\fleche{\rho} \GL(V_{\rho})
$$
each $\rho$ determines an $\ell$-adic lisse sheaf $\Lambda_{\rho}$ on
$U$ which is pointwise pure of weight $0$ and satisfies
$$
\chi_{\rho}(\frob_{x,p})=\frtr{\Lambda_{\rho}}{x}{\Fp}
$$
for all $x\in U(\Fp)$. We therefore obtain
$$
K(x)=\sum_{\rho}{c_{\rho}K_{\rho}}
$$
for $x\in U(\Fp)$, where $K_{\rho}$ is the trace function of
$\Lambda_{\rho}$.  
\par
We rearrange this slightly for convenience. Let $\mathcal{T}$ denote
the set of $\rho$ such that $\Lambda_{\rho}$ is geometrically
trivial. We know that $K_{\rho}$ is a constant of weight $0$, say
$\alpha_{\rho}$, for $\rho\in\mathcal{T}$, and we define
$\sheaf{F}_1=\bar{\Qq}_{\ell}$, so $K_1(x)=1$, and
$$
c_1=\sum_{\rho\in\mathcal{T}}{c_{\rho}\alpha_{\rho}}.
$$
\par
Then we enumerate arbitrarily 
$$
\{\rho\notin\mathcal{T}\}=\{\rho_2,\ldots, \rho_k\}
$$ 
and take $\sheaf{F}_i=j_*\Lambda_{\rho_i}$ where $j\,:\, U\injecte
\Aa^1$ is the open immersion, and $c_i=c_{\rho_i}$.  This gives the
desired decomposition~(\ref{eq-decomp-charfun}) with
$S=\tilde{S}(\Fp)$, which has $\leq |\tilde{S}|\leq \deg P$ elements.
\par
We now bound the numerical invariants in this decomposition. First,
note that the number of non-zero summands is at most the number of
$\rho$, i.e, the number of conjugacy classes in $G$, and hence is
bounded in terms of $\deg P$ only.  For any $\rho$ we have
$$
|c_{\rho}|=\Bigl|\frac{1}{|G|}
\sum_{g\in G}{\theta(g)\chi_{\rho}(g)}
\Bigr|\leq \dim\rho\leq \sqrt{|G|}
$$
which is bounded in terms of $\deg P$ only (using very trivial bounds
$|\chi_{\rho}(g)|\leq \dim \rho$, $|\theta(g)|\leq 1$ and the fact
that the sum of squares of $\dim\rho$ is equal to $|G|$).  And since
$p\nmid |G|$, all sheaves $\Lambda_{\rho}$ are tame, and since they
are unramified outside $S$, we get
$$
\cond(\Lambda_{\rho})\leq |S|+\dim\rho
$$
which is again bounded in terms of $\deg P$ only.
\par
Moreover, none of the sheaves $\Lambda_{\rho}$ can
contain a Jordan-H\"older factor geometrically isomorphic to
$\sheaf{L}_{\chi(X)}\otimes\sheaf{L}_{\psi(X)}$ with $\psi$
non-trivial, since the $\Lambda_{\rho}$ are tamely ramified
everywhere. If we assume that $P$ is squarefree,
$0$ is not a critical value, and all the sheaves $\Lambda_{\rho}$
are unramified at $0$ and therefore cannot have a non-trivial Kummer
sheaf as (geometric) Jordan-Hölder factor. Thus the sheaf $\sheaf{F}_i$ does
not contain an exceptional factor in this case.
\par
We conclude by proving~(\ref{eq-size-c1}): we have
\begin{align*}
  |\mathcal{P}\cap U(\Fp)|&=\sum_{x\in U(\Fp)}\theta(\frob_{x,p})\\
  &=\sum_{\rho}c_{\rho}\sum_{x\in U(\Fp)}{K_{\rho}(x)}\\
  &=c_1|U(\Fp)|+\sum_{\rho\notin \mathcal{T}} {c_{\rho}\sum_{x\in
      U(\Fp)}{K_{\rho}(x)}}.
\end{align*}
\par
For each $\rho$ which is not geometrically trivial, we can apply the
Riemann Hypothesis to the inner sum, which shows it is $\ll p^{1/2}$
with an implicit constant that depends only on $\deg P$ (since the
conductor of $\Lambda_{\rho}$ is bounded in terms of $\deg P$
only). Since the number of $\rho$ and the constants $c_{\rho}$ are
also bounded in terms of $\deg P$ only, we obtain
$$
c_1=\frac{|\mathcal{P}\cap
  U(\Fp)|}{|U(\Fp)|}+O(p^{1/2}|U(\Fp)|^{-1}),
$$
hence the result since $p-\deg P\leq |U(\Fp)|\leq p$. 
\end{proof}

\begin{bibdiv}

\begin{biblist}
\bib{blomer}{article}{
   author={Blomer, V.},
   title={Subconvexity for twisted L-functions on GL(3)},
   journal={Am. Journ. Math.},
   volume={134},
   date={2012},
   number={5},
   pages={1385--1421},

}

\bib{borel}{book}{
  author={Borel, A.},
  title={Linear algebraic groups},
  publisher={Springer},
  series={Graduate Texts in Math.},
  volume={126},
  year={1991},
}

\bib{bourgainmore}{article}{
   author={Bourgain, J.},
   title={More on the sum-product phenomenon in prime fields and its
   applications},
   journal={Int. J. Number Theory},
   volume={1},
   date={2005},
   number={1},
   pages={1--32}
}

\bib{bourgain}{article}{
  author={Bourgain, J.},
  title={On the Fourier-Walsh spectrum of the Moebius function},
  journal={Israel J. of Math.},
  status={to appear},
}

\bib{BG}{article}{
  author={Bourgain, J.},
  author={Garaev, M. Z.},
  title={Sumsets of reciprocals in prime fields and multilinear Kloosterman sums},
  journal={\url{arXiv:1211.4184 }},
  date={2012},
}

\bib{BSZ}{incollection}{
  author={Bourgain, J.},
  author={Sarnak, P.},
  author={Ziegler, T.},
  title={Disjointness of Moebius from horocycle flows},
  booktitle={From Fourier analysis and number theory to Radon
    transforms and geometry},
  series={Dev. Math.},
  volume={28},
  pages={67--83},
  publisher={Springer},
  year={2013},
}

\bib{deligne}{book}{
  author={Deligne, P.},
  title={Cohomologie \'etale, SGA $4\ 1/2$},
  publisher={Springer},
  series={Lecture Notes in Mathematics},
  volume={569},
  year={1977},
}

\bib{deligne-drinfeld}{misc}{
   author={Deligne, P.},
   title={letter to V. Drinfeld},
   date={dated June 18, 2011},
   pages={9 pages},  
}

\bib{EK}{article}{
   author={Esnault, H.},
   author={Kerz, M.},
   title={A finiteness theorem for Galois representations 
     of function fields over finite fields (after Deligne)},
   journal={Acta Math. Vietnam},
   volume={37},
   date={2012},
   pages={531--562},
}

\bib{FouvryAM}{article}{
   author={Fouvry, {\'E}.},
   title={Autour du th\'eor\`eme de Bombieri-Vinogradov},
   journal={Acta Math.},
   volume={152},
   date={1984},
   number={3-4},
   pages={219--244},
}

\bib{FouvryCrelle}{article}{
   author={Fouvry, {\'E}.},
   title={Sur le probl\`eme des diviseurs de Titchmarsh},
   journal={J. reine angew. Math.},
   volume={357},
   date={1985},
   pages={51--76},
}

\bib{FKM}{article}{
   author={Fouvry, {\'E}.},
   author={Kowalski, E.},
   author={Michel, Ph.},
   title={Algebraic twists of modular forms and Hecke orbits},
   journal={Preprint \url{arXiv:1207.0617}},
   date={2012},
 }

\bib{FKM1.5}{article}{
   author={Fouvry, {\'E}.},
   author={Kowalski, E.},
   author={Michel, Ph.},
   title={Counting sheaves using spherical codes},
   journal={Math. Res. Letters},
   status={to appear},
 }

\bib{FMAnn}{article}{
   author={Fouvry, {\'E}.},
   author={Michel, Ph.},
   title={Sur certaines sommes d'exponentielles sur les nombres premiers},
   journal={Ann. Sci. \'Ecole Norm. Sup. (4)},
   volume={31},
   date={1998},
   number={1},
   pages={93--130},
}
	
\bib{FMAnnals}{article}{
   author={Fouvry, {\'E}},
   author={Michel, Ph.},
   title={Sur le changement de signe des sommes de Kloosterman},
   journal={Ann. of Math. (2)},
   volume={165},
   date={2007},
   number={3},
   pages={675--715},
}

\bib{FoMiPac}{article}{
   author={Fouvry, {\'E}.},
   author={Michel, Ph.},
   title={Sommes de modules de sommes d'exponentielles},
   language={French, with English summary},
   journal={Pacific J. Math.},
   volume={209},
   date={2003},
   number={2},
   pages={261--288},
   doi={10.2140/pjm.2003.209.261},
}

\bib{FoSh}{article}{
   author={Fouvry, {\'E}.},
   author={Shparlinski, I. E.},
   title={On a ternary quadratic form over primes},
   journal={Acta Arith.},
   volume={150},
   date={2011},
   number={3},
   pages={285--314},
}

\bib{FGS}{article}{
   author={Friedlander, J. B.},
   author={Gong, K.},
   author={Shparlinski{\u\i}, I.},
   title={Character sums over shifted primes},
   language={Russian, with Russian summary},
   journal={Mat. Zametki},
   volume={88},
   date={2010},
   number={4},
   pages={605--619},
   translation={
      journal={Math. Notes},
      volume={88},
      date={2010},
      number={3-4},
      pages={585--598},
      issn={0001-4346},
   },
}
	
 \bib{FrIw}{article}{
   author={Friedlander, J. B.},
   author={Iwaniec, H.},
   title={Incomplete Kloosterman sums and a divisor problem},
   note={With an appendix by B.J. Birch and E. Bombieri},
   journal={Ann. of Math. (2)},
   volume={121},
   date={1985},
   number={2},
   pages={319--350},
}

\bib{green}{article}{
  author={Green, B.J.},
  title={On (not) computing the Moebius function using bounded depth
    circuits},
  journal={Combinatorics, Probability and Computing},
  volume={21},
  date={2012},
  pages={942--951},
}

\bib{Harman}{article}{
   author={Harman, G.},
   title={Trigonometric sums over primes. I},
   journal={Mathematika},
   volume={28},
   date={1981},
   number={2},
   pages={249--254 (1982)},
}
	
\bib{HB}{article}{
   author={Heath-Brown, D. R.},
   title={Prime numbers in short intervals and a generalized Vaughan
   identity},
   journal={Canad. J. Math.},
   volume={34},
   date={1982},
   number={6},
   pages={1365--1377},
}

\bib{HNY}{article}{
  author={Heinloth, J.},
  author={Ng\^o, B.-C.},
  author={Yun, Z.},
  title={Kloosterman sheaves for reductive groups},
  journal={Ann. of Math.},
  date={2013},
  volume={177},
  pages={241--310},
}

\bib{Hua}{book}{
   author={Hua, L. K.},
   title={Additive theory of prime numbers},
   series={Translations of Mathematical Monographs, Vol. 13},
   publisher={American Mathematical Society},
   place={Providence, R.I.},
   date={1965},
   pages={xiii+190},
}
	
\bib{IwaIntro}{book}{
   author={Iwaniec, H.},
   title={Introduction to the spectral theory of automorphic forms},
   series={Biblioteca de la Revista Matem\'atica Iberoamericana},
   publisher={Revista Matem\'atica Iberoamericana},
   place={Madrid},
   date={1995},
   pages={xiv+247},
}

\bib{KI}{book}{
   author={Iwaniec, H.},
   author={Kowalski, E.},
   title={Analytic number theory},
   series={American Mathematical Society Colloquium Publications},
   volume={53},
   publisher={American Mathematical Society},
   place={Providence, RI},
   date={2004},
   pages={xii+615},
}

\bib{ILS}{article}{
   author={Iwaniec, H.},
   author={Luo, W.},
   author={Sarnak, P.},
   title={Low lying zeros of families of $L$-functions},
   journal={Inst. Hautes \'Etudes Sci. Publ. Math.},
   number={91},
   date={2000},
   pages={55--131 (2001)},
 }
	
\bib{Kar}{article}{
   author={Karatsuba, A. A.},
   title={Sums of characters with prime numbers},
   journal={Izv. Akad. Nauk SSSR Ser. Mat.},
   volume={34},
   date={1970},
   pages={299--321},
  }

\bib{Kar2}{article}{
   author={Karatsuba, A. A.},
   title={Sums of Legendre symbols of quadratic polynomials with prime
   numbers},
   language={Russian},
   journal={Izv. Akad. Nauk SSSR Ser. Mat.},
   volume={42},
   date={1978},
   number={2},
   pages={315--324, 470},
   issn={0373-2436},
}

\bib{Kar3}{article}{
   author={Karatsuba, A. A.},
   title={Distribution of pairs of residues and nonresidues of special form},
   language={Russian},
   journal={Izv. Akad. Nauk SSSR Ser. Mat.},
   volume={51},
   date={1987},
   number={5},
   pages={994--1009, 1117--1118},
   issn={0373-2436},
   translation={
      journal={Math. USSR-Izv.},
      volume={31},
      date={1988},
      number={2},
      pages={307--323},
      issn={0025-5726},
   },
}

\bib{GKM}{book}{
   author={Katz, N. M.},
   title={Gauss sums, Kloosterman sums, and monodromy groups},
   series={Annals of Mathematics Studies},
   volume={116},
   publisher={Princeton University Press},
   place={Princeton, NJ},
   date={1988},
   pages={x+246},
   isbn={0-691-08432-7},
   isbn={0-691-08433-5},
}

\bib{katz-rls}{book}{
   author={Katz, N. M.},
   title={Rigid local systems},
   series={Annals of Mathematics Studies},
   volume={139},
   publisher={Princeton University Press},
   place={Princeton, NJ},
   date={1993},
}

\bib{Mat}{article}{
   author={Matom{\"a}ki, K.},
   title={A note on signs of Kloosterman sums},
   journal={Bull. Soc. Math. France},
   volume={139},
   date={2011},
   number={3},
   pages={287--295},
}

\bib{Michelthese}{article}{
   author={Michel, Ph.},
   title={Autour des  conjectures de Sato-Tate  
    },
   journal={Th\`ese de Doctorat \`es Sciences, Universit\' e de Paris-Sud},
   date={1995},
}
	
\bib{MichelInv}{article}{
   author={Michel, Ph.},
   title={Autour de la conjecture de Sato-Tate pour les sommes de
   Kloosterman. I},
   journal={Invent. math.},
   volume={121},
   date={1995},
   number={1},
   pages={61--78},
}

\bib{MichelDMJ}{article}{
   author={Michel, Ph.},
   title={Minorations de sommes d'exponentielles},
   journal={Duke Math. J.},
   volume={95},
   date={1998},
   number={2},
   pages={227--240},
}

\bib{montgomery}{book}{
  author={Montgomery, H. L.},
  title={Topics in multiplicative number theory},
  series={Lecture Notes in Mathematics},
  volume={227},
  publisher={Springer},
  date={1971},
}

\bib{Pitt}{article}{
   author={Pitt, N.},
   title={On an analogue of Titchmarsh’s divisor problem for holomorphic cusp forms},
   journal={J. Amer. Math. Soc.},
   doi={\url{http://dx.doi.org/10.1090/S0894-0347-2012-00750-4}},
   volume={26},
   date={2013},
   pages={735--776},
}

\bib{sarnak}{article}{
   author={Sarnak, P.},
   title={Moebius randomness and dynamics},
   journal={Not. S. African Math. Soc},
   volume={43},
   date={2012},
   pages={89--97},
}
\bib{Siv1}{article}{
   author={Sivak-Fischler, J.},
   title={Crible \'etrange et sommes de Kloosterman},
   language={French},
   journal={Acta Arith.},
   volume={128},
   date={2007},
   number={1},
   pages={69--100},
   doi={10.4064/aa128-1-4},
}

\bib{Siv}{article}{
   author={Sivak-Fischler, J.},
   title={Crible asymptotique et sommes de Kloosterman},
   journal={Bull. Soc. Math. France},
   volume={137},
   date={2009},
   number={1},
   pages={1--62},
}

\bib{Yun}{article}{
   author={Yun, Z.},
   title={Examples of Kloosterman sheaves},
   journal={manuscript},
   date={2009},
}

\end{biblist}
\end{bibdiv}

\end{document}